\newcommand{\kk}{\mathbb{k}}
\newcommand{\NN}{\normalfont\mathbb{N}}
\newcommand{\ZZ}{{\normalfont\mathbb{Z}}}
\newcommand{\PP}{\normalfont\mathbb{P}}
\newcommand{\mm}{{\normalfont\mathfrak{m}}}
\newcommand{\QQ}{\mathbb{Q}}
\newcommand{\pp}{\mathfrak{p}}
\newcommand{\aaa}{\mathfrak{a}}
\newcommand{\bb}{\mathfrak{b}}
\newcommand{\nn}{{\mathfrak{n}}}
\newcommand{\reg}{\normalfont\text{reg}}
\newcommand{\depth}{\normalfont\text{depth}}
\newcommand{\Ext}{\normalfont\text{Ext}}
\newcommand{\Ker}{\normalfont\text{Ker}}
\newcommand{\Fib}{{\normalfont\text{Fib}}}
\newcommand{\HS}{{\normalfont\text{HS}}}
\newcommand{\IM}{\normalfont\text{Im}}
\newcommand{\II}{\mathcal{I}}
\newcommand{\ob}{{\normalfont\text{ob}}}
\newcommand{\Hom}{{\normalfont\text{Hom}}}
\newcommand{\bbA}{\mathbb{A}}
\newcommand{\OO}{\mathcal{O}}
\newcommand{\FF}{\mathcal{F}}
\newcommand{\HL}{\normalfont\text{H}_{\mm}}
\newcommand{\HH}{\normalfont\text{H}}
\newcommand{\iniTerm}{\normalfont\text{in}}
\newcommand{\Proj}{\normalfont\text{Proj}}
\newcommand{\Spec}{{\normalfont\text{Spec}}}
\newcommand{\FLoc}{{\normalfont\text{FLoc}}}
\newcommand{\Art}{{\normalfont\text{Art}}}
\newcommand{\Hilb}{\normalfont\text{Hilb}}
\newtheorem{theorem}{Theorem}[section]
\newtheorem{headthm}{Theorem}
\newaliascnt{headcor}{headthm}
\newaliascnt{headconj}{headthm}
\newaliascnt{corollary}{theorem}
\newtheorem{corollary}[corollary]{Corollary}
\newaliascnt{claim}{theorem}
\newaliascnt{lemma}{theorem}
\newtheorem{lemma}[lemma]{Lemma}
\newaliascnt{conjecture}{theorem}
\newaliascnt{proposition}{theorem}
\newtheorem{proposition}[proposition]{Proposition}
\theoremstyle{definition}
\newaliascnt{definition}{theorem}
\newtheorem{definition}[definition]{Definition}
\newaliascnt{notation}{theorem}
\newaliascnt{example}{theorem}
\newtheorem{example}[example]{Example}
\newaliascnt{examples}{theorem}
\newaliascnt{remark}{theorem}
\newtheorem{remark}[remark]{Remark}
\newaliascnt{question}{theorem}
\newtheorem{question}[question]{Question}
\newaliascnt{questions}{theorem}
\newaliascnt{problem}{theorem}
\newaliascnt{construction}{theorem}
\newaliascnt{setup}{theorem}
\newtheorem{setup}[setup]{Setup}
\newaliascnt{algorithm}{theorem}
\newaliascnt{observation}{theorem}
\newaliascnt{defprop}{theorem}
\DeclareFontFamily{OT1}{pzc}{}
\DeclareFontShape{OT1}{pzc}{m}{it}{<-> s * [1.100] pzcmi7t}{}
\DeclareMathAlphabet{\mathchanc}{OT1}{pzc}{m}{it}
\DeclareMathOperator{\fFib}{\mathchanc{Fib}}
\DeclareMathOperator{\fHS}{\mathchanc{HS}}
\def\equationautorefname~#1\null{(#1)\null}
\def\sectionautorefname~#1\null{Section #1\null}
\def\subsectionautorefname~#1\null{\S #1\null}
\begin{document}

	\title{A local study of the fiber-full scheme}

\author{Yairon Cid-Ruiz}
\address{Department of Mathematics, KU Leuven, Celestijnenlaan 200B, 3001 Leuven, Belgium}
\email{yairon.cidruiz@kuleuven.be}
	
	\author{Ritvik Ramkumar}
	\address{Department of Mathematics, University of California at Berkeley, Berkeley, CA, 94720, USA}
	\email{ritvik@math.berkeley.edu}

		\keywords{fiber-full scheme, Hilbert scheme,   fiber-full modules and sheaves, tangent space, obstruction, local cohomology, one-parameter flat family, Gr\"obner degeneration.}
	\subjclass[2010]{14C05, 14D22, 13D02, 13D07, 13D45.}

	\date{\today}

	\begin{abstract}
		We study some of the local properties of the fiber-full scheme, which is a fine moduli space that generalizes the Hilbert scheme by  parametrizing closed subschemes with  prescribed cohomological data.
		As a consequence, we provide sufficient conditions for cohomology to remain constant under Gr\"obner degenerations. 
		We also describe a tangent-obstruction theory for the fiber-full scheme in analogy with the one for the Hilbert scheme.
	\end{abstract}

	\maketitle
	
\section{Introduction}

In previous recent work \cite{FIBER_FULL_SCHEME}, we introduced the \emph{fiber-full scheme} which can be seen as the parameter space that generalizes the Hilbert scheme by controlling the entire cohomological data.
Here we continue the study of this scheme from a local point of view. 
Before discussing our findings regarding the fiber-full scheme, we choose to address the following fundamental questions that are among the main motivations of our work.

Let $\kk$ be a field, $R$ be a standard graded polynomial ring $R = \kk[x_0,\ldots,x_r]$, $\mm = (x_0,\ldots,x_r) \subset R$ be the graded irrelevant ideal and $S = R[t] \cong R \otimes_{\kk} \kk[t]$.

\begin{question}
		Let $\mathcal{I} \subset S$ be a one-parameter flat family of homogeneous ideals with special fiber $I_0 \subset R$ and general fiber $I_1 \subset R$. 
		Under which conditions imposed \emph{only} on the special fiber $I_0 \subset R$ do we have that 
		$
		\dim_\kk\left(\left[\HL^i(R/I_0)\right]_\nu\right) = \dim_\kk\left(\left[\HL^i(R/I_1)\right]_\nu\right)
		$
		for all $i \ge 0$ and $\nu \in \ZZ$?
\end{question}

We say that a one-parameter flat family $\II \subset S$ \emph{has constant cohomology} if it satisfies the conclusion of the above question.
Recall that if $\II \subset S$ has constant cohomology then important invariants are equal for the special fiber $I_0 \subset R$ and the general fiber $I_1 \subset R$ (for instance, we obtain equalities in Castelnuovo-Mumford regularity and in depth, i.e., $\reg(R/I_0) = \reg(R/I_1)$ and $\depth(R/I_0) = \depth(R/I_1)$).
By upper semicontinuity, we always have that $\dim_\kk\left(\left[\HL^i(R/I_0)\right]_\nu\right) \ge \dim_\kk\left(\left[\HL^i(R/I_1)\right]_\nu\right)$.
A particular case of major interest in the following sub-question about \emph{Gr\"obner degenerations}.

\begin{question}
	\label{question_2}
	Let $I \subset R$ be a homogeneous ideal and $\iniTerm_>(I) \subset R$ be the initial ideal with respect to a monomial order $>$ on $R$.
	Under which conditions imposed \emph{only} on the initial ideal $\iniTerm_>(I) \subset R$ do we have that 
	$
	\dim_\kk\left(\left[\HL^i(R/I)\right]_\nu\right) = \dim_\kk\left(\left[\HL^i(R/\iniTerm_>(I))\right]_\nu\right)
	$
	for all $i \ge 0$ and $\nu \in \ZZ$?
\end{question}

Important progress was made recently by Conca and Varbaro in \cite{CONCA_VARBARO} as they showed that the conclusion of \autoref{question_2} is satisfied when $\iniTerm_>(I)$ is squarefree.
Our main contribution towards the above questions is the following theorem.

\begin{headthm}[{\autoref{thm_ one_param_ideals}}]
	\label{thmA}
	Under the above notation, let $\mathcal{I} \subset S$ be a one-parameter flat family of homogeneous ideals with special fiber $I_0 \subset R$ and general fiber $I_1 \subset R$. 
	If the zero map is the only zero-degree homogeneous $R$-linear map from $\HL^{i-1}(R/I_0)$ to $\HL^{i}(R/I_0)$ for all $1 \le i \le r$, i.e.,
	$$
	\left[\Hom_R\Big(\HL^{i-1}(R/I_0),\, \HL^{i}(R/I_0)\Big)\right]_0 = 0 \quad \text{for all $1 \le i \le r$},
	$$
	then the family $\mathcal{I}$ has constant cohomology.
\end{headthm}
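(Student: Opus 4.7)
The plan is to show that each graded piece $N^i := [\HL^i(M)]_\nu$ of the local cohomology of $M := S/\mathcal{I}$ is flat over $B := \kk[t]$. Once this is established, the long exact sequence of $\HL^\bullet$ applied to $0 \to M \xrightarrow{t-c} M \to R/I_c \to 0$ (with $I_c$ the fiber at $t = c$) shows that $N^i \otimes_B B/(t-c) \xrightarrow{\sim} [\HL^i(R/I_c)]_\nu$, since $N^{i+1}$ has no $(t-c)$-torsion by flatness; the common $\kk$-dimension is then $\mathrm{rk}_B N^i$, independent of $c$.

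To prove flatness, I would introduce the spectral sequence arising from the filtration $F^p := t^p M/t^k M$ of $M/t^k M$. Flatness of $M$ over $B$ gives $F^p/F^{p+1} \cong R/I_0$, and the filtered \v{C}ech complex yields
$$E_1^{p,q} = \HL^{p+q}(R/I_0) \Longrightarrow \HL^{p+q}(M/t^k M).$$
A naturality argument identifies the differential $d_1 : E_1^{p,q} \to E_1^{p+1,q}$ with the connecting map in $\HL^\bullet$ associated to the SES $0 \to R/I_0 \xrightarrow{t} M/t^2 M \to R/I_0 \to 0$ (invoking $F^p/F^{p+2} \cong M/t^2 M$). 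This connecting map is a natural degree-zero $R$-linear map $\HL^{p+q}(R/I_0) \to \HL^{p+q+1}(R/I_0)$, which by hypothesis vanishes when $p + q \le r - 1$. An inductive bookkeeping on the $E_s$-page then shows that all higher differentials $d_s$ with source cohomological degree $\le r - 1$ also vanish: in the relevant range, each $E_s^{p,q}$ equals the full $\HL^{p+q}(R/I_0)$, and each target of $d_s$ sits inside $\HL^{p+q+1}(R/I_0)$ as a submodule (rather than a quotient), so composing $d_s$ with the inclusion produces a degree-zero $R$-linear map on full local cohomology modules, forced to vanish by hypothesis.

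Consequently the spectral sequence degenerates in cohomological degrees $\le r - 1$, giving $\dim_\kk [\HL^i(M/t^k M)]_\nu = k \cdot \dim_\kk [\HL^i(R/I_0)]_\nu$ for all $k \ge 1$ and $0 \le i \le r - 1$. Combining this with the short exact sequence $0 \to N^i/t^k N^i \to [\HL^i(M/t^k M)]_\nu \to N^{i+1}[t^k] \to 0$ (from the long exact sequence of $0 \to M \xrightarrow{t^k} M \to M/t^k M \to 0$) and the identity $\dim_\kk N/t^k N = k \cdot \mathrm{rk}_B N + \dim_\kk N[t^k]$ for finitely generated $B$-modules, a short calculation extracts
$$\dim_\kk N^i[t^k] + \dim_\kk N^{i+1}[t^k] = k \bigl(\dim_\kk N^i[t] + \dim_\kk N^{i+1}[t]\bigr)$$
for $0 \le i \le r - 1$ and all $k \ge 1$. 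Since $\dim_\kk N[t^k] \le k\dim_\kk N[t]$ for any finitely generated $B$-module $N$, with equality for all $k$ only if $N[t] = 0$, this forces $N^i[t] = 0$ for $1 \le i \le r$. Flatness of $N^0$ is immediate from $N^0 \subseteq [M]_\nu$, and the top case $i = r + 1$ is handled by noting either $\mathcal{I} = 0$ (trivial family) or $\dim R/I_0 \le r$, whence $\HL^{r+1}(R/I_c) = 0$ for all $c$ by constancy of the Hilbert polynomial in a flat family.

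The main obstacles will be (i) rigorously verifying the vanishing of all higher differentials $d_s$ in the spectral sequence via the iterated-subquotient bookkeeping, and (ii) justifying the finite generation of $[\HL^i(M)]_\nu$ over $B$ (which underlies the structure-theorem calculation), a standard fact following from the Serre--Grothendieck identification of local cohomology with coherent sheaf cohomology of a projective morphism.
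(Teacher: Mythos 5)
Your proof is correct, but it takes a genuinely different route from the paper's. The paper shows that $S/\mathcal{I}\otimes_{A}\kk[t]_{(t)}$ is \emph{fiber-full} by induction on the Artinian truncations $\kk[t]/(t^{q})$: the surjectivity criterion of \autoref{lem_fib_full_crit_loc} together with \autoref{prop_one_param} identifies the obstruction at each step with a degree-zero map $\HL^{i}(\Phi_{J_{q+1}})$ out of $\HL^{i}(I_{0})$, which --- because the previous obstruction vanished --- factors through a degree-zero map $\HL^{i}(I_{0})\cong\HL^{i-1}(R/I_{0})\to\HL^{i}(R/I_{0})$ and hence dies by hypothesis; constancy of cohomology then follows from the base-change property of fiber-full modules. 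You instead degenerate the spectral sequence of the $t$-adic filtration on $M/t^{k}M$ and extract $t$-torsion-freeness of $[\HL^{i}(M)]_{\nu}$ from a dimension count. Both arguments rest on the same mechanism (at each order of the $t$-adic filtration the obstruction to constancy is a degree-zero $R$-linear map between consecutive local cohomology modules of the special fiber), but your packaging is self-contained: it bypasses \autoref{thm_fib_full_mod}, \autoref{lem_fib_full_crit_loc} and \autoref{lem_base_ch_fib_full} entirely, at the price of the $E_{s}$-page bookkeeping. That bookkeeping does close up: for $p+q\le r$ every differential arriving at $E_{s}^{p,q}$ has source in total degree $\le r-1$ and vanishes by the induction on $s$, so $E_{s}^{p,q}$ is a genuine submodule of $\HL^{p+q}(R/I_{0})$ (and equals all of it when $p+q\le r-1$), whence your hypothesis kills $d_{s}$. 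The paper's route is shorter given the machinery already developed in the companion papers and yields the stronger conclusion that $[\mathcal{I}]$ actually lies in the fiber-full scheme. Two details to make explicit when you write this up: the finite generation of $[\HL^{i}(M)]_{\nu}$ over $\kk[t]$, as you note; and the fact that your final fiber comparison needs either that the torsion of $N^{i}$ is supported only at $t=0$ (which holds because $\HL^{i}(M)\otimes_{A}\kk[t,t^{-1}]\cong\HL^{i}(R/I_{1})\otimes_{\kk}\kk[t,t^{-1}]$ is free), or simply a choice of $c\ne 0$ outside the finite support of that torsion.
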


This result provides an abundance of new families of ideals such that flat degenerations to them preserve cohomology. Notice that the condition of \autoref{thmA} is trivially satisfied when the special fiber $I_0 \subset R$ is Cohen-Macaulay. As pointed out by the following examples, there are many non-squarefree monomial ideals that satisfy the criterion of  \autoref{thmA}.

\begin{example}[{\autoref{examp_first}, \autoref{examp_second}}]
	Consider the (non-squarefree) monomial ideals 
	$$
	I_1 = (x^2,y^2,xy,xz,yz), \quad I_2 = (x^2y, xz^2, y^2z, z^3) \quad  \text{and} \quad I_3 = (xy^2, xz)
	$$
	in $R = \kk[x,y,z]$, which are not Cohen-Macaulay.
	These three ideals satisfy the criterion of \autoref{thmA}.
	Thus, for any homogeneous ideal $I \subset R$ such that  $\iniTerm_>(I)$ equals $I_1$, $I_2$ or $I_3$, we have that  $
	\dim_\kk\left(\left[\HL^i(R/I)\right]_\nu\right) = \dim_\kk\left(\left[\HL^i(R/\iniTerm_>(I))\right]_\nu\right)
	$
	for all $i \ge 0$ and $\nu \in \ZZ$.
\end{example}

The result of \autoref{thmA} does not cover all squarefree monomial ideals; 
in \autoref{examp_sqrfree_bad} we give a squarefree monomial ideal that does not satisfy the condition of \autoref{thmA}. 

From another point of view, a typical example of a non-Cohen Macaulay ideal of dimension at least $2$, is one where the local cohomology is only non-vanishing at the the depth and dimension, both of which are different. Even more significantly, any ideal whose local cohomology is vanishing in alternate degrees also satisfies the hypothesis of \autoref{thmA}. In both of these situations, the ideals satisfy the criterion of \autoref{thmA}. For instance, generic residual intersections \autoref{examp_residual} and many semigroup rings \cite{TRUNG_HOA}. 

\medskip

Our proof of \autoref{thmA} follows directly from the local study of the fiber-full scheme that we make in this paper.
This is hardly a surprise because, by definition, the fiber-full scheme is the parameter space controlling the entire cohomological data.
The fiber-full scheme $\Fib_{\PP_A^r/A}^\mathbf{h}$ is a fine moduli space parametrizing all closed subschemes $Z \subset \PP_A^r$ such that $\HH^i(\PP_{A}^r, \OO_Z(\nu))$ is a locally free $A$-module of rank equal to $h_i(\nu)$, where $\mathbf{h} = (h_0,\ldots,h_r) : \ZZ^{r+1} \rightarrow \NN^{r+1}$ is a fixed tuple of functions (see \cite{FIBER_FULL_SCHEME}).

Since we are interested in arbitrary ideals and not just the saturated ones, we introduce a version of the fiber-full scheme that works for homogeneous ideals. 
Accordingly, we obtain a cohomological stratification of the version of the Hilbert scheme introduced by Haiman and Sturmfels \cite{HAIMAN_STURMFELS}.
Let $A$ be a Noetherian ring, $T = A[x_0,\ldots,x_r]$ be a standard graded polynomial ring and $\mm = (x_0,\ldots,x_r) \subset T$.
For a given function $g : \NN \rightarrow \NN$, the Hilbert scheme $\HS_{T/A}^g$ of Haiman and Sturmfels parametrizes all homogeneous ideals $I \subset T$ such that $\left[T/I\right]_\nu$ is a locally free $A$-module of rank equal to $g(\nu)$ for all $\nu \in \NN$.
We stratify the scheme $\HS_{T/A}^g$ as follows.
Given a function $g : \NN \rightarrow \NN$ and a tuple of functions $\mathbf{h} = (h_0,\ldots,h_{r+1}) : \ZZ^{r+2} \rightarrow \NN^{r+2}$, we develop the fiber-full scheme $\Fib_{T/A}^{g,\mathbf{h}}$  parametrizing all homogeneous ideals $I \subset T$ that satisfy the two conditions:
\begin{enumerate}[(a)]
	\item $
	\text{$\left[T/I\right]_\nu$ is a locally free $A$-module of rank equal to $g(\nu)$ for all $\nu \in \NN$}
	$, and
	\item $\text{$\left[\HL^i(T/I)\right]_\nu$ is a locally free $A$-module of rank equal to $h_i(\nu)$ for all $i \ge 0$ and $\nu \in \ZZ$}$.
\end{enumerate}
In \autoref{thm_homog_fib_sch}, by using the techniques developed in \cite{FIBER_FULL_SCHEME}, we show the existence of the parameter space $\Fib_{T/A}^{g,\mathbf{h}}$.
Note that 
$$
\Fib_{T/A} \,=\, \bigsqcup_{\substack{g : \NN \rightarrow \NN\\ \mathbf{h} : \ZZ^{r+2} \rightarrow \NN^{r+2}}} \Fib_{T/A}^{g,\mathbf{h}}
$$
is the fiber-full scheme that parametrizes all homogeneous ideals giving a flat quotient with flat local cohomology modules. 

Our first main result towards a local study of the fiber-full scheme is determining its tangent space.

\begin{headthm}[\autoref{thm_tan_space_loc}]
	Under the above notation, let $I_0 \subset R$ be a homogeneous ideal. 
	The tangent space of $\Fib_{R/\kk}$ at the corresponding point $[I_0]$ is given by 
	$$
	T_{[I_0]} \Fib_{R/\kk} = \big\{\varphi \in \big[\Hom(I_0,R/I_0)\big]_0 \,\mid\, \HH^i_{\mm}(\varphi) = 0 \text{ for all $i \ge 0$}\big\},
	$$
	where $\HL^i(\varphi)$ denotes the natural map $\HL^i(\varphi) : \HL^i(I_0) \rightarrow \HL^i(R/I_0)$  induced in cohomology.
\end{headthm}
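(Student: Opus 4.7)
The plan is to adapt the classical Hilbert-scheme tangent-space computation from \cite{HAIMAN_STURMFELS} and then superimpose the extra fiber-full requirement, which will translate into the vanishing of certain connecting morphisms in local cohomology. By the functor of points, a tangent vector of $\Fib_{R/\kk}$ at $[I_0]$ is a homogeneous ideal $\mathcal{I} \subset R[\epsilon] := R \otimes_\kk \kk[\epsilon]/(\epsilon^2)$ reducing to $I_0$ modulo $\epsilon$ such that (i) $R[\epsilon]/\mathcal{I}$ is flat over $\kk[\epsilon]/(\epsilon^2)$, and (ii) $\bigl[\HL^i(R[\epsilon]/\mathcal{I})\bigr]_\nu$ is free of rank $h_i(\nu) := \dim_\kk \bigl[\HL^i(R/I_0)\bigr]_\nu$ over $\kk[\epsilon]/(\epsilon^2)$ for all $i \ge 0$ and all $\nu \in \ZZ$. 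The Haiman--Sturmfels description of the tangent space to $\HS_{R/\kk}$ puts ideals satisfying (i) in bijection with $\varphi \in \bigl[\Hom(I_0,R/I_0)\bigr]_0$ via $\mathcal{I} = \{f + \epsilon g \mid f \in I_0,\ g \in R,\ \overline{g} = \varphi(f)\}$, so the task reduces to singling out those $\varphi$ for which (ii) also holds.

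Condition (i) implies that tensoring $0 \to \kk \xrightarrow{\epsilon} \kk[\epsilon]/(\epsilon^2) \to \kk \to 0$ with $R[\epsilon]/\mathcal{I}$ yields a short exact sequence of graded $R$-modules
$$0 \longrightarrow R/I_0 \xrightarrow{\cdot \epsilon} R[\epsilon]/\mathcal{I} \longrightarrow R/I_0 \longrightarrow 0.$$
Applying $\HL^\bullet$ produces a long exact sequence with connecting morphisms $\delta^i : \HL^i(R/I_0) \to \HL^{i+1}(R/I_0)$. In each graded degree $\nu$ this gives
$$\dim_\kk\bigl[\HL^i(R[\epsilon]/\mathcal{I})\bigr]_\nu \;=\; 2 h_i(\nu) \;-\; \dim_\kk\bigl[\IM \delta^{i-1}\bigr]_\nu \;-\; \dim_\kk\bigl[\IM \delta^i\bigr]_\nu;$$
combining this dimension count with the local flatness criterion over $\kk[\epsilon]/(\epsilon^2)$ (which reads $\Ker(\cdot\epsilon) = \IM(\cdot\epsilon)$), one sees that (ii) holds precisely when all $\delta^i$ vanish.

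I would next identify $\delta^i$ explicitly. The composite $R \hookrightarrow R[\epsilon] \twoheadrightarrow R[\epsilon]/\mathcal{I}$ furnishes a morphism of short exact sequences
$$\begin{array}{ccccccccc}
0 & \to & I_0 & \to & R & \to & R/I_0 & \to & 0 \\
& & {\scriptstyle -\varphi}\downarrow & & \downarrow & & \downarrow {\scriptstyle \Id} & & \\
0 & \to & R/I_0 & \to & R[\epsilon]/\mathcal{I} & \to & R/I_0 & \to & 0
\end{array}$$
whose leftmost vertical is forced to be $-\varphi$: for $f \in I_0$ one has $f + \mathcal{I} = -\epsilon g + \mathcal{I} = \epsilon \cdot (-\overline{g})$ with $\overline{g} = \varphi(f)$. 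Naturality of the local cohomology boundary then yields
$$\delta^i \;=\; -\,\HL^{i+1}(\varphi) \circ \partial^i,$$
where $\partial^i : \HL^i(R/I_0) \to \HL^{i+1}(I_0)$ is the boundary for the top sequence.

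Since $R$ is Cohen--Macaulay of dimension $r+1$, one has $\HL^i(R) = 0$ for $0 \le i \le r$, so $\partial^i$ is an isomorphism for $0 \le i \le r-1$; in this range $\delta^i = 0$ is equivalent to $\HL^{i+1}(\varphi) = 0$. All remaining connecting maps $\delta^i$ vanish automatically, since $\HL^{r+1}(R/I_0) = 0$ whenever $I_0 \ne 0$ and $\HL^j(R/I_0) = 0$ for $j \ge r+2$; similarly, $\HL^0(\varphi) = 0$ holds because $\HL^0(I_0) = 0$, and $\HL^j(\varphi) = 0$ holds trivially for $j \ge r+1$ because the target vanishes. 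Putting these together, $\delta^i = 0$ for all $i$ is equivalent to $\HL^i(\varphi) = 0$ for all $i \ge 0$, which is the claimed description. The main subtle step is the translation in the second paragraph between condition (ii) and the vanishing of all $\delta^i$; the remainder is standard naturality of connecting morphisms combined with the Cohen--Macaulay vanishing for $R$.
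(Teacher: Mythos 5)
Your proof is correct, and its final step (comparing two short exact sequences, taking long exact sequences, and exploiting $\HL^i(R)=0$ for $i\le r$) is the same mechanism the paper uses; but you reach that step by a genuinely different reduction. The paper first invokes \autoref{lem_fib_full_crit_loc}, which converts fiber-fullness of $R[\epsilon]/\mathcal{I}$ into surjectivity of the maps $\HL^i(\mathcal{I})\to\HL^i(I_0)$ and which itself rests on the local criterion of \autoref{thm_fib_full_mod} and on \autoref{lem_base_ch_fib_full}; it then compares $0\to I_0\xrightarrow{t}\mathcal{I}\to I_0\to 0$ with $0\to I_0\to R\to R/I_0\to 0$. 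You instead work with the quotients, mapping $0\to I_0\to R\to R/I_0\to 0$ into $0\to R/I_0\to R[\epsilon]/\mathcal{I}\to R/I_0\to 0$, and you characterize the fiber-full condition over $\kk[\epsilon]/(\epsilon^2)$ directly: local freeness of each $\big[\HL^i(R[\epsilon]/\mathcal{I})\big]_\nu$ of rank $h_i(\nu)$ is equivalent to the vanishing of all connecting maps $\delta^i$. This makes the argument self-contained, avoiding the fiber-full machinery of \cite{FIBER_FULL}, at the price of being special to square-zero extensions of $\kk$; the paper's lemma holds over any Noetherian local base and is reused in \autoref{thm_small_extensions_loc} and \autoref{prop_one_param}, so its route amortizes better. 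Two small points to tighten: (a) for the inclusion $\supseteq$ you need that $\delta^{i-1}=\delta^i=0$ forces freeness, not merely the correct dimension $2h_i(\nu)$; this does follow, since the long exact sequence then identifies both $\Ker(\cdot\epsilon)$ and $\IM(\cdot\epsilon)$ on $\HL^i(R[\epsilon]/\mathcal{I})$ with the kernel of the reduction map to $\HL^i(R/I_0)$, but it merits an explicit sentence; (b) the degenerate case $I_0=0$ should be dispatched separately (both sides of the asserted equality are then zero), as the paper does inside \autoref{lem_fib_full_crit_loc}.
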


Since $T_{[I_0]} \HS_{R/\kk} = \left[\Hom_R(I_0, R/I_0)\right]_0$, one could rephrase the above theorem by saying that the tangent space of $\Fib_{R/\kk}$ is given by the tangent vectors of $\HS_{R/\kk}$ that are trivial from a cohomological point of view.
This seems quite intuitive since by definition cohomologies remain constant over the entire fiber-full scheme $\Fib_{R/\kk}^{g, \mathbf{h}}$.
In \autoref{thm_tan_space}, we give a similar computation for the tangent space of the fiber-full scheme $\Fib_{\PP_{\kk}^r/\kk}$.

\medskip
We also provide a description of a tangent-obstruction theory for the fiber-full scheme in analogy with the one for the Hilbert scheme
(see, e.g., \cite[Theorem 6.2]{hartshorne2010deformation}).
Let $(C', \nn',\kk)$ and $(C, \nn,\kk)$ be Artinian local rings with residue field $\kk$, and suppose that 
$
0 \rightarrow \aaa \rightarrow C' \rightarrow C \rightarrow 0
$
is a short exact sequence with $\nn' \aaa = 0$.
Let $T = C[x_0,\ldots,x_r]$ and $T' = C'[x_0,\ldots,x_r]$ be  standard graded polynomial rings.
Let $I \subset T$ be a homogeneous ideal such that $[I] \in \Fib_{T/C}$, and denote by $I_0 \subset R$  the homogeneous ideal $I_0 = I \otimes_{C} \kk$ (i.e., $I \subset T$ is a fiber-full extension of $I_0 \subset R$ over $C$).
We seek all homogeneous ideals $I' \subset T'$ such that $[I'] \in \Fib_{T'/C'}$ and $I \cong I' \otimes_{C'} C$.
The following result describes  the fiber-full extensions of $I$ over $C'$.

\begin{headthm}[\autoref{thm_small_extensions_loc}]
	Under the notations above, the set of fiber-full extensions of $[I] \in \Fib_{T/C}$ over $C'$
	$$
	\left\lbrace [I'] \in \Fib_{T'/C'} \;\text{ such that }\;  I' \otimes_{C'} C \cong I \right\rbrace
	$$
	is a pseudotorsor under the action of the subgroup of $\big[\Hom_R\left( I_0, R/I_0 \otimes_\kk \aaa \right)\big]_0$ given by
	$$
	\big\lbrace \varphi \in \big[\Hom_R\left( I_0, R/I_0 \otimes_\kk \aaa \right)\big]_0 \;\mid\; \HL^i(\varphi)=0 \text{ for all $i \ge 0$} \big\rbrace, 
	$$
	where $\HL^i(\varphi)$ denotes the natural map $\HL^i(\varphi) : \HL^i(I_0) \rightarrow \HL^i(R/I_0 \otimes_\kk \aaa)$ induced in cohomology.
\end{headthm}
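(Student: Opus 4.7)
The plan is to reduce the statement to the pseudotorsor theorem for the Hilbert scheme and then cut out the fiber-full locus using the local cohomology characterization behind \autoref{thm_homog_fib_sch}. The first step is to invoke the classical pseudotorsor result for the Haiman--Sturmfels Hilbert scheme $\HS_{T'/C'}$, a direct adaptation of \cite[Theorem 6.2]{hartshorne2010deformation}: the set $\{[I'] \in \HS_{T'/C'} : I' \otimes_{C'} C \cong I\}$ of flat extensions of $I$ is a pseudotorsor under the full group $\big[\Hom_R(I_0, R/I_0 \otimes_\kk \aaa)\big]_0$. Since $\Fib_{T'/C'}$ sits inside $\HS_{T'/C'}$ as the locus cut out by flatness and prescribed ranks of local cohomology, the fiber-full extensions form a subset of the flat extensions, and the task becomes that of identifying the stabilizer subgroup under the translation action.

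Next, fix a flat extension $[I'] \in \HS_{T'/C'}$ of $I$. Flatness of $T'/I'$ over $C'$ applied to the small extension $0 \to \aaa \to C' \to C \to 0$, together with $\nn'\aaa = 0$, yields the short exact sequence of $C'$-modules
$$0 \longrightarrow R/I_0 \otimes_\kk \aaa \longrightarrow T'/I' \longrightarrow T/I \longrightarrow 0.$$
Taking local cohomology with respect to $\mm$ produces a long exact sequence with connecting maps $\delta^i \colon \HL^i(T/I) \to \HL^{i+1}(R/I_0) \otimes_\kk \aaa$. Invoking the local cohomology characterization of fiber-fullness (\autoref{thm_homog_fib_sch} and \cite{FIBER_FULL_SCHEME}), $[I']$ lies in $\Fib_{T'/C'}$ if and only if every $\delta^i$ vanishes: the vanishing splits the long exact sequence into short exact sequences whose outer terms are $C'$-flat of the prescribed ranks, so the middle terms $\HL^i(T'/I')$ inherit flatness and the correct ranks; conversely, a degree-by-degree length count forces $\delta^i = 0$ once flatness and ranks are assumed.

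The main step, and the principal obstacle, is to track how the $\delta^i$ transform under the pseudotorsor action by $\varphi \in \big[\Hom_R(I_0, R/I_0 \otimes_\kk \aaa)\big]_0$. Unwinding the explicit translation $I'' = I' + \varphi$ obtained by lifting generators of $I_0$, and comparing the \v{C}ech (or Koszul) cocycles representing $\delta^i(I')$ and $\delta^i(I'')$, should yield the identification
$$\delta^i(I'') - \delta^i(I') \;=\; \HL^{i+1}(\varphi) \circ \partial^i,$$
where $\partial^i$ is the composition $\HL^i(T/I) \to \HL^i(R/I_0) \xrightarrow{\sim} \HL^{i+1}(I_0)$ arising from the $C$-flatness of $T/I$ and from the long exact sequence of $0 \to I_0 \to R \to R/I_0 \to 0$ (using $\HL^i(R) = 0$ for $i \ne r+1$, with the boundary indices $i \in \{r, r+1\}$ handled by hand). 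It then follows that all of the connecting maps $\delta^i$ are preserved precisely when $\HL^i(\varphi) = 0$ for every $i \ge 0$ (the case $i = 0$ being automatic since $\HL^0(I_0) = 0$), which identifies the stabilizer subgroup and completes the proof.
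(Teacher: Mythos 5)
Your overall strategy is sound and genuinely different from the paper's, so let me compare the two and flag two points. Both arguments begin with the classical pseudotorsor statement for flat extensions and both ultimately rest on the surjectivity criterion of \autoref{lem_fib_full_crit_loc}. The paper compares two flat extensions $I_1', I_2'$ by forming the diagonal module $\Delta = \{(x_1,x_2,x) : \pi_1(x_1)=x=\pi_2(x_2)\}$, mapping $0 \to (I\otimes_C\aaa)^2 \to \Delta \to I \to 0$ to $0 \to I\otimes_C\aaa \to S\otimes_C\aaa \to S/I\otimes_C\aaa \to 0$ via the difference of the two inclusions, and then chasing the long exact sequences to show that $\HL^i(\varphi)=0$ precisely when $\HL^i(I_2')\to\HL^i(I)$ is surjective; this works entirely with natural maps and never chooses generators or cocycles. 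You instead encode fiber-fullness as the vanishing of the connecting maps $\delta^i$ of $0\to R/I_0\otimes_\kk\aaa\to T'/I'\to T/I\to 0$ and compute how $\delta^i$ shifts under the translation action. Your pivotal identity $\delta^i(I'')-\delta^i(I')=\HL^{i+1}(\varphi)\circ\partial^i$ is in fact correct: lifting a \v{C}ech cocycle of $T/I$ to $T'$ and reducing its coboundary modulo $I'$ and modulo $I''$ separately, the two reductions differ by $\varphi$ applied to the class representing $\partial^i$, and since $\partial^i$ surjects onto $\HL^{i+1}(I_0)$ for $i\le r-1$ while $\HL^0(\varphi)$ and $\HL^{r+1}(\varphi)$ vanish automatically, the stabilizer is exactly the stated subgroup. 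However, as written you only assert this formula (``should yield''); the cocycle computation is the real content of the argument and is exactly the work the paper's diagonal construction performs, so it must be written out. Second, your justification that $\delta^i=0$ for all $i$ implies $[I']\in\Fib_{T'/C'}$ is not valid as stated: the outer terms $\HL^i(R/I_0)\otimes_\kk\aaa$ and $\HL^i(T/I)$ are \emph{not} $C'$-flat (one is killed by $\nn'$, the other by $\aaa$), and an extension of a $C$-flat module by a $\kk$-vector space need not be $C'$-flat (e.g.\ $C\oplus\aaa$ is not $C'$-free). The correct route is the one the paper takes: $\delta^i=0$ gives surjectivity of $\HL^i(T'/I')\to\HL^i(T/I)$, which composed with the surjection $\HL^i(T/I)\to\HL^i(R/I_0)$ coming from fiber-fullness of $T/I$ over $C$ verifies the criterion of \autoref{lem_fib_full_crit_loc}; the converse uses the base change of \autoref{lem_base_ch_fib_full} rather than a length count on prescribed ranks. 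With these two repairs your proof goes through; its advantage over the paper's is that the additivity of connecting maps in the extension class makes the group action transparent, at the cost of an explicit cocycle computation that the diagonal construction avoids.
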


In \autoref{thm_obs_loc}, we determine sufficient conditions for the existence of obstructions in the fiber-full scheme $\Fib_{R/\kk}$.
Furthermore, in \autoref{subsect_fib_full_sch} we provide a similar study for a tangent-obstruction theory for the fiber-full scheme $\Fib_{\PP_{\kk}^r/\kk}$ parametrizing closed subschemes.

\medskip

The organization of this paper is as follows.
In \autoref{sect_prelim}, we set up the notation used throughout the document and we recall some required results.
In \autoref{sect_hom_fib_full}, we show the existence of the fiber-full scheme parametrizing homogeneous ideals.
We study tangent-obstruction theories for the fiber-full schemes $\Fib_{R/\kk}$ and $\Fib_{\PP_{\kk}^r/\kk}$ in \autoref{sect_tan_obs}.
Our results regarding  one-parameter flat families are given in \autoref{sect_one_param}.
In \autoref{sect_comparison_local_functors}, we show that the local deformation functors of $\Fib_{R/\kk}$ and $\HS_{R/\kk}$ coincide under the condition of \autoref{thmA}.
Finally, in \autoref{sect_examp} we provide several examples illustrating our work.

\medskip
\noindent
\textbf{Acknowledgments.} 
Y.C.R. was partially supported by an FWO Postdoctoral Fellowship (1220122N).

\section{Preliminaries}
\label{sect_prelim}

In this short section, we fix the notation and recall some necessary results that were obtained in the previous papers \cite{FIBER_FULL, FIBER_FULL_SCHEME}.
Here we review some basic results regarding \emph{fiber-full modules and sheaves}, and the corresponding parameter space that we introduced: \emph{the fiber-full scheme}.
Throughout this section the following setup is fixed. 

\begin{setup}
	\label{setup_prelim}
	Let $A$ be a Noetherian ring, $R = A[x_0,\ldots,x_r]$ be a standard graded polynomial ring, and $\mm = (x_0,\ldots, x_r) \subset R$ be the graded irrelevant ideal.
\end{setup}

The main interest in fiber-full modules and sheaves comes from the fact that all their cohomologies are flat over the base ring. 

\begin{definition}
 	A finitely generated graded $R$-module is said to be \emph{fiber-full over $A$} if $M$ is flat over $A$ and $\HL^i(M)$ is flat over $A$ for all $i \ge 0$.
\end{definition}

\begin{definition}
	A coherent sheaf $\FF$ on $\PP_A^r = \Proj(R)$ is said to be \emph{fiber-full over $A$} if $\HH^i(\PP_A^r, \FF(\nu))$ is flat over $A$ for all $i \ge 0$ and $\nu \in \ZZ$.
\end{definition}

The following result is a fundamental criterion to detect fiber-full modules and sheaves. It is also the primary reason for choosing the term \emph{fiber-full} (see also \cite[Chapter 3]{CONFERENCE_LEVICO}).

\begin{theorem}[{\cite[Theorem A]{FIBER_FULL}}]
	\label{thm_fib_full_mod}
	Let $M$ be a finitely generated graded $R$-module. 
	Then, the following results are equivalent: 
	\begin{enumerate}[\rm (i)]
		\item $M$ is fiber-full over $A$.
		\item $M$ is flat over $A$ and $\HL^i\left(M \otimes_{A} A_\pp/\pp^qA_\pp\right)$ is $(A_\pp/\pp^qA_\pp)$-flat for all $i \ge 0$,  $q \ge 1$ and $\pp \in \Spec(A)$.
		\item $M$ is flat over $A$ and the natural map $\HL^i\left(M \otimes_{A} A_\pp/\pp^qA_\pp\right) \rightarrow \HL^i\left(M \otimes_{A} A_\pp/\pp A_\pp\right)$ is surjective for all $i \ge 0$, $q \ge 1$ and $\pp \in \Spec(A)$.
	\end{enumerate}
\end{theorem}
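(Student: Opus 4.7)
The natural approach is to use the \v{C}ech complex $\check{C}^\bullet = \check{C}^\bullet(x_0,\ldots,x_r; M)$ on the variables of $R$: it is a bounded complex of $A$-modules whose cohomology computes $\HL^i(M)$, its terms are localizations of $M$ and are therefore $A$-flat whenever $M$ is $A$-flat, and its formation commutes with arbitrary $A$-base change, so for every $A$-algebra $N$ one has the canonical isomorphism
\[
\HL^i(M \otimes_A N) \;\cong\; H^i(\check{C}^\bullet \otimes_A N).
\]
The unifying engine will be the standard homological fact that for a bounded complex of $A$-flat modules with $A$-flat cohomology, the formation of cohomology commutes with arbitrary base change, giving $H^i(\check{C}^\bullet \otimes_A N) \cong H^i(\check{C}^\bullet) \otimes_A N$.

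With this in hand, (i) $\Rightarrow$ (ii) is immediate upon taking $N = A_\pp/\pp^q A_\pp$, since then $\HL^i(M \otimes_A A_\pp/\pp^q A_\pp) \cong \HL^i(M) \otimes_A A_\pp/\pp^q A_\pp$ is $(A_\pp/\pp^q A_\pp)$-flat by the flatness of $\HL^i(M)$. For (ii) $\Rightarrow$ (iii), I would apply the same engine, now after replacing $A$ by the Artinian ring $A_\pp/\pp^q A_\pp$ and $M$ by $M \otimes_A A_\pp/\pp^q A_\pp$: hypothesis (ii) supplies exactly the flat-cohomology input required, so base changing further along $A_\pp/\pp^q A_\pp \twoheadrightarrow A_\pp/\pp A_\pp$ identifies the natural map of the theorem with the obviously surjective quotient map to $\HL^i(M \otimes_A A_\pp/\pp^q A_\pp) \otimes_{A_\pp/\pp^q A_\pp} A_\pp/\pp A_\pp$.

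The main obstacle is (iii) $\Rightarrow$ (i), where flatness must be extracted from a mere surjectivity input. I would proceed in two stages. First, for a fixed prime $\pp$, exploit the filtration short exact sequences $0 \to \pp^{q-1}A_\pp/\pp^q A_\pp \to A_\pp/\pp^q A_\pp \to A_\pp/\pp^{q-1}A_\pp \to 0$ and the corresponding long exact sequences of local cohomology; combined with the hypothesized surjections onto $\HL^i(M \otimes_A A_\pp/\pp A_\pp)$ and an induction on $q$, these yield enough control to conclude, via the Artinian local flatness criterion (vanishing of $\Tor_1$ against the residue field), that each $\HL^i(M \otimes_A A_\pp/\pp^q A_\pp)$ is $(A_\pp/\pp^q A_\pp)$-flat. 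Second, to globalize to $A$-flatness of $\HL^i(M)$ itself, I would use the grading: each graded piece $[\check{C}^j]_\nu$ is a finitely generated $A$-module, hence so is $[\HL^i(M)]_\nu$, and the \v{C}ech identifications $[\HL^i(M)]_\nu \otimes_A A_\pp/\pp^q A_\pp \cong [\HL^i(M \otimes_A A_\pp/\pp^q A_\pp)]_\nu$ together with the standard local criterion for flatness (checked after $\pp$-adic completion and faithful flatness) assemble the stage-one $(A_\pp/\pp^q A_\pp)$-flatness across all $q$ and all $\pp$ into the desired $A$-flatness of each $[\HL^i(M)]_\nu$.
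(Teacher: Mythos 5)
First, a point of comparison: the paper itself offers no proof of this statement --- it is imported verbatim from \cite{FIBER_FULL} (Theorem~A there) --- so your attempt can only be measured against the strategy of that cited paper, which is indeed the one you adopt: compute $\HL^i$ via the \v{C}ech complex $\check{C}^\bullet(x_0,\ldots,x_r;M)$, whose terms are $A$-flat when $M$ is, and run cohomology-and-base-change over the Artinian local rings $A_\pp/\pp^qA_\pp$. Your treatment of the soft directions is correct: the fact that a bounded complex of flat modules with flat cohomology computes cohomology after arbitrary base change gives (i)$\Rightarrow$(ii) directly, and applying the same fact over $A_\pp/\pp^qA_\pp$ (where (ii) supplies the flat-cohomology input) identifies the map in (iii) with a reduction modulo $\pp A_\pp$, hence surjective; this is exactly the mechanism recorded in \autoref{lem_base_ch_fib_full}.

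The genuine gap is in Stage~2 of (iii)$\Rightarrow$(i). Two concrete problems. First, the graded pieces $[\check{C}^j]_\nu$ are \emph{not} finitely generated over $A$ (already $[R_{x_0}]_0 \supseteq A\cdot\{x_1^n/x_0^n\}_{n\ge 0}$ is an infinite free family), so the finite generation of $[\HL^i(M)]_\nu$ --- which is true and needed --- does not follow from your stated reason and requires a separate argument. Second, and more seriously, the ``\v{C}ech identification'' $[\HL^i(M)]_\nu \otimes_A A_\pp/\pp^qA_\pp \cong [\HL^i(M\otimes_A A_\pp/\pp^qA_\pp)]_\nu$ that you invoke to transport the Artinian-level flatness from Stage~1 up to $A$-flatness is precisely the base-change isomorphism that is only available \emph{after} one knows the local cohomology is flat (this is again \autoref{lem_base_ch_fib_full}); for $i$ below the top it fails for a general flat $M$, and only the top cohomology base-changes for free by right-exactness. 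Invoking it here makes the argument circular. Closing this gap is the real content of the theorem: one must either run a descending induction on $i$ starting from the top cohomology, or pass to the $\pp$-adic completion and show that the surjective inverse system of finite free $(A_\pp/\pp^qA_\pp)$-modules produced by Stage~1 has free inverse limit which is identified (via a Mittag--Leffler argument) with the completion of $[\HL^i(M)]_\nu$. Your Stage~1 (induction on $q$ for fixed $\pp$, filtration sequences, the Artinian freeness criterion) is the right idea and matches \cite{FIBER_FULL}, although the phrase ``yield enough control'' conceals the key step there as well, namely proving that the connecting homomorphisms in the long exact sequences vanish (via Nakayama applied to the finitely generated graded pieces).
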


\begin{theorem}[{\cite[Theorem 4.2]{FIBER_FULL_SCHEME}}]
	\label{thm_fib_full_shv}
	Let $\FF$ be a coherent sheaf on $\PP_A^r$. 
	Then, the following results are equivalent: 
	\begin{enumerate}[\rm (i)]
		\item $\FF$ is fiber-full over $A$.
		\item $\FF$ is flat over $A$ and $\HH^i\left(\PP_A^r, \FF(\nu) \otimes_{A} A_\pp/\pp^qA_\pp\right)$ is $A_\pp/\pp^qA_\pp$-flat for all $i \ge 0$, $q \ge 1$, $\nu \in \ZZ$ and $\pp \in \Spec(A)$.
		\item $\FF$ is flat over $A$ and the natural map $\HH^i\left(\PP_A^r, \FF(\nu) \otimes_{A} A_\pp/\pp^qA_\pp\right) \rightarrow \HH^i\left(\PP_A^r, \FF(\nu) \otimes_{A} A_\pp/\pp A_\pp\right)$ is surjective for all $i \ge 0$, $q \ge 1$, $\nu \in \ZZ$ and $\pp \in \Spec(A)$.
	\end{enumerate}
\end{theorem}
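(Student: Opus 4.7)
The plan is to reduce the sheaf-theoretic statement to the module-theoretic one, \autoref{thm_fib_full_mod}, via the Serre--Grothendieck correspondence between sheaf cohomology on $\PP_A^r$ and graded local cohomology at $\mm$. First, I would attach to $\FF$ the full section module
\[
M \,:=\, \bigoplus_{\nu \in \ZZ} \HH^0\!\big(\PP_A^r,\, \FF(\nu)\big),
\]
which is a finitely generated graded $R$-module (because $A$ is Noetherian and $\FF$ is coherent). One has $\widetilde{M} \cong \FF$, and $M$ is saturated in the sense that $\HL^0(M) = \HL^1(M) = 0$, so the Serre--Grothendieck identifications read
\[
M_\nu \,\cong\, \HH^0\!\big(\PP_A^r,\, \FF(\nu)\big) \qquad \text{and} \qquad \big[\HL^{i+1}(M)\big]_\nu \,\cong\, \HH^i\!\big(\PP_A^r,\, \FF(\nu)\big)
\]
for all $i \ge 1$ and $\nu \in \ZZ$; in particular every sheaf cohomology group of every twist of $\FF$ is visible among the graded pieces of $M$ and of $\HL^\bullet(M)$.

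With this dictionary in place, each of the three conditions in the statement translates, piece by piece, into the corresponding condition of \autoref{thm_fib_full_mod} applied to $M$. For (i), flatness of all groups $\HH^i(\PP_A^r, \FF(\nu))$ over $A$ is equivalent to flatness of $M$ over $A$ together with flatness of every $\HL^i(M)$ over $A$: indeed, a graded $A$-module is $A$-flat if and only if each graded piece is, and the cases $i=0,1$ of the local cohomology conditions are automatic from the saturation. Thus $\FF$ is fiber-full over $A$ if and only if $M$ is fiber-full over $A$ in the sense of \autoref{thm_fib_full_mod}. Conditions (ii) and (iii) translate in the same manner, provided that the passage $\FF \leadsto M$ commutes with the base change $A \to B := A_\pp/\pp^q A_\pp$ in the precise form
\[
\big[\HL^{i+1}\!\big(M \otimes_A B\big)\big]_\nu \,\cong\, \HH^i\!\big(\PP_A^r,\, \FF(\nu) \otimes_A B\big) \qquad \text{for all } i \ge 1, \, \nu \in \ZZ,
\]
together with the analogous isomorphism for $i = 0$. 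Granting these compatibilities, the desired three-way equivalence for $\FF$ follows by invoking \autoref{thm_fib_full_mod} on $M$.

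The main obstacle will be precisely the base-change compatibility above, since $B = A_\pp/\pp^q A_\pp$ is not $A$-flat. I would handle it by reducing to the Noetherian (indeed Artinian) local setting, then exploiting the $A$-flatness of $\FF$ built into conditions (ii) and (iii) in order to commute tensor products with the \v{C}ech complex on the standard affine open cover of $\PP_A^r$; the saturation vanishing $\HL^0(M) = \HL^1(M) = 0$ guarantees that no spurious low-degree local cohomology appears after base change. With these compatibilities secured, the sheaf-theoretic equivalences follow formally from the module-theoretic ones.
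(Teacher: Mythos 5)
This statement is not proved in the paper at all: it is recalled verbatim from the companion work \cite[Theorem 4.2]{FIBER_FULL_SCHEME}, so there is no internal proof to compare your argument against. Judged on its own terms, your strategy of reducing to \autoref{thm_fib_full_mod} via the Serre--Grothendieck correspondence is the natural one, but it has two genuine gaps. The first is that $M = \bigoplus_{\nu\in\ZZ}\HH^0(\PP^r_A,\FF(\nu))$ is \emph{not} finitely generated over $R$ for a general coherent $\FF$: already for $\FF = \OO_Z$ with $Z$ a closed point one has $\HH^0(\PP^r_A,\FF(\nu))\neq 0$ for every $\nu\in\ZZ$, and the resulting module (essentially $\kk[x_0,x_0^{-1}]$) is not finitely generated. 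One must instead take a truncation $\bigoplus_{\nu\ge n_0}\HH^0(\PP^r_A,\FF(\nu))$, or any finitely generated $M$ with $\widetilde{M}\cong\FF$, and then the clean identifications $M_\nu\cong\HH^0(\PP^r_A,\FF(\nu))$ and $\HL^0(M)=\HL^1(M)=0$ that your dictionary relies on no longer hold in all degrees.

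The second gap is the step you yourself flag as the main obstacle, which you then dismiss with a false claim: saturatedness is \emph{not} preserved under the non-flat base change $A\to B=A_\pp/\pp^qA_\pp$. Even when $\HL^0(M)=\HL^1(M)=0$, the modules $\HL^0(M\otimes_A B)$ and $\HL^1(M\otimes_A B)$ can be nonzero (the special fiber of a flat family of saturated ideals need not be saturated), so it is not true that ``no spurious low-degree local cohomology appears after base change.'' Consequently conditions (ii) and (iii) of \autoref{thm_fib_full_mod}, which quantify over all $i\ge 0$ including $i=0,1$, do not translate piece by piece into conditions (ii) and (iii) for $\FF$: after base change one only has the four-term exact sequence $0\to\HL^0(M\otimes_A B)\to M\otimes_A B\to\bigoplus_\nu\HH^0(\PP^r_B,\FF(\nu)\otimes_A B)\to\HL^1(M\otimes_A B)\to 0$ together with $\HL^{i+1}(M\otimes_A B)\cong\bigoplus_\nu\HH^i(\PP^r_B,\FF(\nu)\otimes_A B)$ for $i\ge 1$. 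Controlling the discrepancy in the range $i=0,1$ is precisely the content that requires an argument (compare the care taken in \autoref{lem_fib_full_crit} for ideal sheaves), so the proposal is incomplete exactly at its acknowledged crux.
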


Quite importantly, these modules and sheaves commute with arbitrary base change in cohomology.

\begin{lemma}[{\cite[Theorem A]{FIBER_FULL}, \cite[Proposition 3.1]{FIBER_FULL_SCHEME}}]
	\label{lem_base_ch_fib_full}
	For any $A$-algebra $B$, the following statements hold: 
	\begin{enumerate}[\rm (i)]
		\item If $M$ is a finitely generated graded $R$-module that is fiber-full over $A$, then one has a natural base change isomorphism 
		$
		\HL^i(M) \otimes_{A} B  \;\xrightarrow{\; \cong \;}\; \HL^i(M \otimes_A B)
		$
		for all $i \ge 0$.
		\item If $\FF$ is a coherent $\OO_{\PP_{A}^r}$-module that is fiber-full over $A$, then one has a natural base change isomorphism 
		$
		\HH^i(\PP_{A}^r, \FF(\nu)) \otimes_{A} B \; \xrightarrow{\; \cong \;} \HH^i(\PP_{B}^r, \FF(\nu) \otimes_A B)
		$
		for all $i \ge 0$ and $\nu \in \ZZ$.
	\end{enumerate}
\end{lemma}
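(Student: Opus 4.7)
The plan is to prove both parts via a Čech complex computation, reducing the claim to a standard homological fact about bounded complexes of flat modules whose cohomology is also flat. The key ingredient I would isolate first is: if $C^\bullet$ is a bounded complex of $A$-flat modules such that each cohomology module $H^i(C^\bullet)$ is $A$-flat, then for any $A$-algebra $B$ the canonical map $H^i(C^\bullet) \otimes_A B \to H^i(C^\bullet \otimes_A B)$ is an isomorphism for every $i$.

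To prove this ingredient I would split $C^\bullet$ into the two families of short exact sequences $0 \to Z^i \to C^i \to Y^{i+1} \to 0$ and $0 \to Y^i \to Z^i \to H^i \to 0$, where $Z^i$ and $Y^i$ denote the cocycles and coboundaries in degree $i$ (kept separate from the letter $B$ reserved for the base-change algebra). A downward induction on $i$ using the long exact Tor sequence shows that every $Z^i$ and every $Y^i$ is $A$-flat: the step from $Y^{i+1}$ flat to $Z^i$ flat comes from the first sequence combined with flatness of $C^i$, and the step from $Z^i$ flat to $Y^i$ flat comes from the second sequence combined with flatness of $H^i$. Boundedness of $C^\bullet$ is what allows the induction to start. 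Once all the terms involved are $A$-flat, each short exact sequence remains exact after tensoring with $B$, and a direct computation yields $H^i(C^\bullet) \otimes_A B \cong H^i(C^\bullet \otimes_A B)$.

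For part (i), I would apply the ingredient to the graded Čech complex $\check{C}^\bullet(x_0,\ldots,x_r;M)$ computing $\HL^\bullet(M)$, whose terms are the iterated localizations $M_{x_{i_0}\cdots x_{i_p}}$. These are $A$-flat because $M$ is and localization preserves flatness, while the cohomologies $\HL^i(M)$ are $A$-flat by the fiber-full hypothesis. Since the Čech complex commutes with $-\otimes_A B$ and $\check{C}^\bullet(\mathbf{x};M\otimes_A B)$ computes $\HL^\bullet(M\otimes_A B)$, the ingredient delivers the claimed isomorphism. Part (ii) proceeds identically using the Čech complex of the standard affine open cover $\{D_+(x_i)\}_{i=0}^r$ of $\PP_A^r$ with coefficients in $\FF(\nu)$: the sections of $\FF(\nu)$ on the iterated intersections are $A$-flat from the flatness of $\FF$, the modules $\HH^i(\PP_A^r,\FF(\nu))$ are $A$-flat by hypothesis, and the Čech complex pulls back correctly along $\PP_B^r \to \PP_A^r$ after tensoring with $B$. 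The only real point of care, and therefore the anticipated obstacle, is verifying that the sheaf-theoretic Čech terms in (ii) are genuinely $A$-flat — this requires unpacking what flatness of $\FF$ over $A$ means on the standard affine charts — after which the whole argument becomes formal.
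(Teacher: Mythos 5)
Your argument is correct. Note that the paper itself does not prove \autoref{lem_base_ch_fib_full}: it is quoted without proof from \cite[Theorem A]{FIBER_FULL} and \cite[Proposition 3.1]{FIBER_FULL_SCHEME}, so there is no internal proof to compare against; but your route is the standard one and is exactly the kind of argument used in those sources. The homological ingredient is sound: descending induction through the two families of short exact sequences $0 \to Z^i \to C^i \to Y^{i+1} \to 0$ and $0 \to Y^i \to Z^i \to H^i \to 0$ gives $A$-flatness of all cocycle and coboundary modules (using that in a short exact sequence with flat quotient, flatness of the middle term passes to the submodule), and then all these sequences stay exact after $-\otimes_A B$, which yields $H^i(C^\bullet)\otimes_A B \cong H^i(C^\bullet\otimes_A B)$. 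Applying this to the \v{C}ech complex on $x_0,\ldots,x_r$ is legitimate in both cases: in (i) the terms $M_{x_{i_0}\cdots x_{i_p}}$ are filtered colimits of copies of the $A$-flat module $M$, hence $A$-flat, and the \v{C}ech complex visibly commutes with $-\otimes_A B$; in (ii) your anticipated point of care is genuine but harmless, since fiber-fullness of $\FF$ entails $A$-flatness of $\FF$ (cf.\ \autoref{thm_fib_full_shv}), and on the affine charts $D_+(x_{i_0}\cdots x_{i_p})$ this gives $A$-flat section modules whose formation commutes with the base change $\PP_B^r \to \PP_A^r$.
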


In \cite{FIBER_FULL_SCHEME} we introduced the natural parameter space for fiber-full sheaves. 
This new object provided a generalization of the classical Hilbert and Quot schemes. 
Here we only consider a special case, and we refer the reader to \cite{FIBER_FULL_SCHEME} for more details.

We consider the following covariant functor $\fFib_{\PP_A^r/A} : (A\text{-alg}) \rightarrow (\text{Sets})$ from the category of Noetherian $A$-algebras to the category of sets 
$$
\fFib_{\PP_A^r/A}(B) \,:= \, \Big\lbrace \text{closed subschemes $Z \subset \PP_{B}^r$} \,\mid\, \HH^i(\PP_{B}^r, \OO_{Z}(\nu)) \text{ is $B$-flat for all $i \ge 0$, $\nu \in \ZZ$} \Big\rbrace. 
$$
We stratify this functor in terms of ``Hilbert functions'' for all the cohomologies.
For a tuple of functions $\mathbf{h} = (h_0,\ldots, h_r) : \ZZ^{r+1} \rightarrow \NN^{r+1}$, we define the subfunctor 
$$
\fFib_{\PP_A^r/A}^\mathbf{h} (B) \,:= \, \left\lbrace Z \in \fFib_{\PP_A^r/A}(B)  \;
\begin{array}{|l}
	\dim_{\kappa(\pp)} \big(\HH^i(\PP_{\kappa(\pp)}^r, \OO_{Z_\pp}(\nu))\big) = h_i(\nu)\\
	\text{for all $0 \le i \le r$, $\nu \in \ZZ$, $\pp \in \Spec(B)$}		
\end{array}
  \right\rbrace,
$$
where $\kappa(\pp) := B_\pp/\pp B_\pp$ is the residue field of $\pp \in \Spec(B)$ and $Z_\pp := Z \times_{\Spec(B)} \Spec(\kappa(\pp))$ is the corresponding fiber.
The following theorem is the main result of \cite{FIBER_FULL_SCHEME} and yields the existence of the \emph{fiber-full scheme}.

\begin{theorem}[{\cite[Theorem A]{FIBER_FULL_SCHEME}}]
		Assume \autoref{setup_prelim}.
		Let $\mathbf{h} = (h_0,\ldots,h_r) : \ZZ^{r+1} \rightarrow \NN^{r+1}$ be a tuple of functions and suppose that $P_\mathbf{h} := \sum_{i = 0}^{r} {(-1)}^r h_i \in \QQ[m]$ is a numerical polynomial.
		Then, there is a quasi-projective $A$-scheme $\Fib^{\mathbf{h}}_{\PP_A^r/A}$ that represents the functor $\fFib^{\mathbf{h}}_{\PP_A^r/A}$ and that is a locally closed subscheme of the Hilbert scheme $\Hilb^{P_\mathbf{h}}_{\PP_A^r/A}$.
\end{theorem}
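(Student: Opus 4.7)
The plan is to realize $\Fib^{\mathbf{h}}_{\PP_A^r/A}$ as a locally closed subscheme of $H := \Hilb^{P_\mathbf{h}}_{\PP_A^r/A}$ by imposing the cohomological rank conditions encoded by $\mathbf{h}$. First I would set up: let $\pi : \PP_H^r \to H$ be the structure morphism and $\mathcal{Z} \subset \PP_H^r$ the universal family; on any fiber $Z_p$, the Euler characteristic $\sum_i (-1)^i h^i\bigl(\OO_{Z_p}(\nu)\bigr)$ equals $P_\mathbf{h}(\nu)$ by definition of $H$, which is compatible with the target rank tuple $\mathbf{h}$.

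For each pair $(i,\nu)$ with $0 \le i \le r$ and $\nu \in \ZZ$, I would take $U_{i,\nu} \subseteq H$ to be the locus where the coherent sheaf $R^i\pi_*\bigl(\OO_\mathcal{Z}(\nu)\bigr)$ is locally free of rank exactly $h_i(\nu)$; standard flattening-stratification arguments (upper semicontinuity of fiber dimension combined with generic freeness) make $U_{i,\nu}$ locally closed in $H$, and on it Grauert's theorem forces formation of $R^i\pi_*\OO_\mathcal{Z}(\nu)$ to commute with arbitrary base change. The candidate is then the scheme-theoretic intersection $U := \bigcap_{i,\nu} U_{i,\nu}$, and the main task is to show that this a priori infinite intersection (indexed by $\nu \in \ZZ$) is actually locally closed.

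The reduction to finitely many conditions is where I expect the main obstacle. For $\nu \gg 0$, Mumford's uniform Castelnuovo-Mumford regularity bound on the Hilbert scheme of fixed Hilbert polynomial gives an $N$ such that $h^i(\OO_{Z_p}(\nu)) = 0$ for $i \ge 1$ and $\nu \ge N$, and $h^0(\OO_{Z_p}(\nu)) = P_\mathbf{h}(\nu)$ for $\nu \ge N$; if these values do not agree with $\mathbf{h}$ then $U = \varnothing$, otherwise all conditions for $\nu \ge N$ are trivially satisfied. For $\nu \ll 0$, I would invoke Grothendieck-Serre duality $h^i(\OO_{Z_p}(\nu)) = \dim \Ext^{r-i}_{\OO_{\PP^r}}\bigl(\OO_{Z_p}, \OO(-r-1-\nu)\bigr)$, together with uniform regularity applied to the relative Ext sheaves over $H$; this expresses the dimensions as eventually polynomial in $\nu$ coming from a bounded family, so matching the user-supplied $h_i(\nu)$ reduces to equality at finitely many values of $\nu$.

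Once the intersection $U$ is cut down to finitely many locally closed conditions, it is itself locally closed in $H$, hence quasi-projective over $A$. To verify that $U$ represents $\fFib^{\mathbf{h}}_{\PP_A^r/A}$, I would take a Noetherian $A$-algebra $B$ with a map $\Spec B \to H$ classifying $Z \subset \PP_B^r$ and check that factoring through $U$ amounts to $R^i(\pi_B)_*\OO_Z(\nu)$ being locally free of rank $h_i(\nu)$ for every $(i,\nu)$; combining the fiber-fullness criterion \autoref{thm_fib_full_shv} with the base-change statement \autoref{lem_base_ch_fib_full}, this is equivalent to $Z$ belonging to $\fFib^{\mathbf{h}}_{\PP_A^r/A}(B)$, which settles representability.
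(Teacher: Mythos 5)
Your overall architecture is the one the cited construction actually uses: carve the fiber-full scheme out of $\Hilb^{P_\mathbf{h}}_{\PP_A^r/A}$ by imposing, twist by twist, local freeness of the cohomology with prescribed rank, and then reduce the a priori infinite set of conditions on $\nu$ to finitely many (Mumford's uniform regularity bound for $\nu \gg 0$, duality for $\nu \ll 0$). That part of the sketch is sound and matches the strategy behind \autoref{thm_homog_fib_sch} in this paper, whose proof simply passes to affine opens of the Hilbert scheme and invokes the locally closed subscheme $\FLoc^{\mathbf{h}}$ of {\cite[Theorem 2.16]{FIBER_FULL_SCHEME}}.

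The genuine gap is in your construction of the loci $U_{i,\nu}$. First, Grauert's theorem requires the base to be reduced, whereas the functor must be represented over \emph{all} Noetherian $A$-algebras $B$ --- including Artinian ones, which is precisely what the tangent-space and obstruction computations in \autoref{sect_tan_obs} exploit. Constancy of $h^i(\OO_{Z_p}(\nu))$ on fibers over a non-reduced stratum does not force $R^i\pi_*\OO_{\mathcal{Z}}(\nu)$ to be locally free there, so "upper semicontinuity plus Grauert" does not produce a scheme structure with the required universal property. Second, even taking the flattening stratification of the coherent sheaf $R^i\pi_*\OO_{\mathcal{Z}}(\nu)$ on $H$ does not help: since higher direct images do not commute with base change, a morphism $T \to H$ factoring through that stratum controls the \emph{pullback} of $R^i\pi_*\OO_{\mathcal{Z}}(\nu)$, not $R^i(\pi_T)_*\OO_{\mathcal{Z}_T}(\nu)$, which is the module the functor $\fFib^{\mathbf{h}}_{\PP_A^r/A}$ is about. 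The construction you need (and the one used in \cite{FIBER_FULL_SCHEME}) replaces these sheaves by a bounded complex of finite free modules computing all the cohomologies compatibly with arbitrary base change (the Grothendieck exchange-property complex), and stratifies using that complex; only then does "factors through the stratum" become equivalent to "$\HH^i(\PP_B^r,\OO_Z(\nu))$ is locally free of rank $h_i(\nu)$" for every Noetherian $B$, after which \autoref{thm_fib_full_shv} and \autoref{lem_base_ch_fib_full} finish the representability argument as you indicate.
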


A direct consequence of this theorem is that the functor $\fFib_{\PP_A^r/A}$ is represented by a scheme given as the disjoint union $\Fib_{\PP_A^r/A} := \bigsqcup_{\mathbf{h} : \ZZ^{r+1} \rightarrow \NN^{r+1}} \Fib^{\mathbf{h}}_{\PP_A^r/A}$.
We refer to both $\Fib^{\mathbf{h}}_{\PP_A^r/A}$ and $\Fib_{\PP_A^r/A}$ as  fiber-full schemes.

\section{The fiber-full scheme for homogeneous ideals}
\label{sect_hom_fib_full}

In this section, we introduce a version of the fiber-full scheme that works for homogeneous ideals. 
This scheme provides a cohomological stratification of the \emph{multigraded Hilbert scheme} \cite{HAIMAN_STURMFELS} of Haiman and Sturmfels.
The multigraded Hilbert scheme was also constructed in \cite{HOM_HILB_SCH} and called the \emph{homogeneous Hilbert scheme}.
Here we continue using \autoref{setup_prelim}.
To simplify notation, for any $A$-algebra $B$, let $R_B$ be the standard graded polynomial ring $R_B:= R \otimes_{A} B$.

Following \cite{HAIMAN_STURMFELS}, given a function $g : \NN \rightarrow \NN$, we consider the covariant functor  $\fHS_{R/A}^g : (A\text{-alg}) \rightarrow (\text{Sets})$ given by 
$$\fHS_{R/A}^g(B) \,:=\, \left\lbrace I \subset R_B \text{ homogeneous ideal}\;
\begin{array}{|l}
	\left[R_B/I\right]_\nu \text{ is locally free over $B$}\\ 
	\text{of constant rank $g(\nu)$ for all $\nu \ge 0$}
\end{array}
 \right\rbrace.
$$
By \cite[Theorem 1.1]{HAIMAN_STURMFELS} (also, see \cite{HOM_HILB_SCH}), there is a projective $A$-scheme $\HS_{R/A}^g$ that represents the functor $\fHS_{R/A}^g $.
The multigraded Hilbert scheme $\HS_{R/A}^g$ is, by construction, the parameter space for homogeneous ideals with Hilbert function $g$.
We denote by $\HS_{R/A} := \bigsqcup_{g : \NN \rightarrow \NN} \HS^g_{R/A}$ the parameter space of homogeneous ideals that give a flat quotient.

We now stratify the multigraded Hilbert scheme in cohomological terms.
Given a function $g : \NN \rightarrow \NN$ and a tuple of functions $\mathbf{h} = (h_0,\ldots,h_{r+1}) : \ZZ^{r+2} \rightarrow \NN^{r+2}$, we define a subfunctor of $\fHS_{R/A}^g$ given by 
$$
\fFib_{R/A}^{g,\mathbf{h}}(B) \,:= \, \left\lbrace 
I \in \fHS_{R/A}^g(B) \; 
\begin{array}{|l}
	\left[\HL^i\left(R_B/I\right)\right]_\nu \text{ is locally free over $B$ of} \\
	\text{constant rank $h_i(\nu)$ for all $0 \le i \le r+1$, $\nu \in \ZZ$}
\end{array}
\right\rbrace.
$$
Thus, $\fFib_{R/A}^{g,\mathbf{h}}$ provides a cohomological stratification of $\fHS_{R/A}^g$.
By using the same techniques developed in \cite{FIBER_FULL_SCHEME}, we can show that the functor $\fFib_{R/A}^{g,\mathbf{h}}$ is representable.

\begin{theorem}
	\label{thm_homog_fib_sch}
	Assume \autoref{setup_prelim}.
	Let $g : \NN \rightarrow \NN$ be a function and $\mathbf{h} = (h_0,\ldots,h_{r+1}) : \ZZ^{r+2} \rightarrow \NN^{r+2}$ be a tuple of functions.
	Then, there is a quasi-projective $A$-scheme $\Fib^{g,\mathbf{h}}_{R/A}$ that represents the functor $\fFib^{g,\mathbf{h}}_{R/A}$ and that is a locally closed subscheme of the multigraded Hilbert scheme $\HS^{g}_{R/A}$.
\end{theorem}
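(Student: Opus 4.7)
The plan is to construct $\Fib^{g,\mathbf{h}}_{R/A}$ as a locally closed subscheme of $\HS^g_{R/A}$, closely following the approach taken for the fiber-full scheme of sheaves in \cite{FIBER_FULL_SCHEME}. Write $\mathcal{I}^{\mathrm{univ}} \subset R_{\HS^g_{R/A}}$ for the universal homogeneous ideal and $\mathcal{M} := R_{\HS^g_{R/A}}/\mathcal{I}^{\mathrm{univ}}$; by construction each graded piece $[\mathcal{M}]_\nu$ is already locally free of rank $g(\nu)$ over $\HS^g_{R/A}$.

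Since $\HS^g_{R/A}$ is projective over $A$ by \cite{HAIMAN_STURMFELS}, the parametrized family of ideals is bounded and admits a uniform Castelnuovo--Mumford regularity bound. Combined with the classical comparison between local and sheaf cohomology (which forces $[\HL^i(\mathcal{M}_b)]_\nu = 0$ for $i \geq 1$ and $\nu \gg 0$, and confines $\HL^0$ to finitely many degrees), only finitely many pairs $(i,\nu) \in \{0,1,\ldots,r+1\} \times \ZZ$ give nontrivial conditions; outside this window the rank function is automatically constant and dictated by $g$. Hence only finitely many rank conditions need to be imposed.

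For each relevant pair $(i, \nu)$, I would express $[\HL^i(\mathcal{M})]_\nu$ as the cohomology in internal degree $\nu$ of a bounded complex of locally free coherent sheaves on $\HS^g_{R/A}$ whose formation commutes with arbitrary base change. This is done by combining the stable Koszul/\v{C}ech complex on $x_0,\ldots,x_r$ with the standard exact sequence relating $\HL^0$, $\HL^1$, $\mathcal{M}$, and sheaf cohomology, and then truncating using the regularity bound exactly as in \cite{FIBER_FULL_SCHEME}. The standard upper-semicontinuity theorem applied to this bounded complex then shows that the locus $Z_{i,\nu} \subset \HS^g_{R/A}$ where the cohomology is locally free of rank $h_i(\nu)$ is a locally closed subscheme, and the finite intersection $Z := \bigcap_{(i,\nu)} Z_{i,\nu}$ is again locally closed.

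It remains to verify that $Z$ represents $\fFib^{g,\mathbf{h}}_{R/A}$. Over $Z$ the restriction of $\mathcal{I}^{\mathrm{univ}}$ satisfies the fiber-full criterion of \autoref{thm_fib_full_mod} and commutes with base change by \autoref{lem_base_ch_fib_full}(i), yielding a tautological element of $\fFib^{g,\mathbf{h}}_{R/A}(Z)$. Conversely, any $I \in \fFib^{g,\mathbf{h}}_{R/A}(B)$ yields a classifying morphism $\Spec(B) \to \HS^g_{R/A}$ that factors through each $Z_{i,\nu}$ by the same base-change statement, and hence through $Z$. Quasi-projectivity of $Z = \Fib^{g,\mathbf{h}}_{R/A}$ follows from projectivity of $\HS^g_{R/A}$. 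The main obstacle will be producing the bounded, base-change-compatible complex of locally free sheaves whose cohomology computes $[\HL^i(\mathcal{M})]_\nu$, since the module-specific degrees $\HL^0$ and $\HL^1$ are not directly handled by the sheaf construction in \cite{FIBER_FULL_SCHEME}; the truncation arguments there provide the template, but must be extended via the exact sequence linking $\HL^0,\HL^1,\mathcal{M}$ and sheaf cohomology in order to cover the full range $0 \leq i \leq r+1$.
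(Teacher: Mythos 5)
Your overall strategy coincides with the paper's: both proofs reduce the theorem to stratifying $\HS^{g}_{R/A}$ by the loci where the graded pieces of the local cohomology of the universal quotient are locally free of the prescribed ranks. The paper does this very briefly — it passes to affine opens $\Spec(L) \subset \HS^{g}_{R/A}$ (using that both functors are Zariski sheaves), takes the universal ideal $I_L \subset R_L$, and invokes \cite[Theorem 2.16]{FIBER_FULL_SCHEME} to obtain the locally closed locus $\FLoc^{\mathbf{h}}_{R_L/I_L}$ with exactly the required factorization property. You instead try to reconstruct that stratification by hand, and it is in this reconstruction that a genuine gap appears.

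The problem is the finiteness claim. It is not true that only finitely many pairs $(i,\nu)$ impose nontrivial conditions, nor that outside a finite window the ranks are ``automatically constant and dictated by $g$.'' Regularity bounds do kill $\left[\HL^i(R_B/I)\right]_\nu$ for $\nu \gg 0$ uniformly, and $\HL^0$ is confined to finitely many degrees; but for $\nu < 0$ and $i \ge 1$ one has $\left[\HL^i(R_B/I)\right]_\nu \cong \HH^{i-1}\big(\PP_B^r, \OO_Z(\nu)\big)$ with $Z = \Proj(R_B/I)$, and since $\sum_i (-1)^i \dim \HH^i(\OO_Z(\nu))$ equals the Hilbert polynomial of $Z$, some of these groups are nonzero for every sufficiently negative $\nu$ whenever $Z \neq \emptyset$. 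The ranks $h_i(\nu)$ in that range are genuinely extra data not determined by $g$ (they already distinguish, e.g., the two components of $\Hilb^{3m+1}_{\PP_\kk^3}$), so infinitely many conditions remain. This matters because an infinite intersection of locally closed subschemes of a Noetherian scheme need not be locally closed — the closed parts stabilize, but the open parts need not — so the step $Z = \bigcap_{(i,\nu)} Z_{i,\nu}$ does not go through as written. To repair it you must show that the conditions for all $\nu$ below a uniform bound are equivalent to finitely many of them, e.g.\ via graded local duality or via a single base-change-compatible two-term complex computing $\HH^{i-1}(\OO_Z(\nu))$ simultaneously for all $\nu \ll 0$; this is precisely the content of the machinery in \cite[Theorem 2.16]{FIBER_FULL_SCHEME} that the paper's proof cites. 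The remainder of your argument — semicontinuity for each individual condition, the tautological family over $Z$, and the factorization of classifying maps via \autoref{thm_fib_full_mod} and \autoref{lem_base_ch_fib_full} — is sound and matches the paper's intent.
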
 
\begin{proof}
	The proof follows essentially along the same lines of \cite[Theorem 5.4]{FIBER_FULL_SCHEME}.
	By the existence of the multigraded Hilbert scheme \cite{HAIMAN_STURMFELS}, the functor $\fHS_{R/A}^g$ is represented by the projective $A$-scheme $\HS_{R/A}^g$.
	Since both functors $\fHS_{R/A}^g$ and $\fFib^{g,\mathbf{h}}_{R/A}$ are Zariski sheaves, we may substitute $\HS_{R/A}^g$ by an affine open subscheme $\Spec(L) \subset \HS_{R/A}^g$.
	Consider the universal ideal $I_L \subset R_L$ representing the subfunctor $\FF_L := h_{\Spec(L)}$ of $\fHS_{R/A}^g = h_{\HS_{R/A}^g}$.
	In other words, for any $A$-algebra $B$ and any $I \in \FF_L(B)$, there is a classifying $A$-homomorphism $L \rightarrow B$ such that $I \cong I_L \otimes_L B$.
	Finally, the representability is achieved by invoking \cite[Theorem 2.16]{FIBER_FULL_SCHEME}  and restricting to the locally closed subscheme $\FLoc_{R_L/I_L}^\mathbf{h} \subset \Spec(L)$.
	Indeed, $\FLoc_{R_L/I_L}^\mathbf{h}$ satisfies the condition that, for any homomorphism $L \rightarrow B$, each $\left[\HL^i(R_L/I_L \otimes_{L} B)\right]_\nu$ is locally free over $B$ of constant rank $h_i(\nu)$ if and only if $L \rightarrow B$ can be factored through $\FLoc_{R_L/I_L}^\mathbf{h}$.
\end{proof}

We say that $\Fib_{R/A}^{g,\mathbf{h}}$ is the \emph{fiber-full scheme} parametrizing homogeneous ideals with Hilbert function $g$ and cohomological data $\mathbf{h}=(h_0,\ldots,h_{r+1})$.
Then 
$$
\Fib_{R/A} \,:=\, \bigsqcup_{\substack{g : \NN \rightarrow \NN\\ \mathbf{h} : \ZZ^{r+2} \rightarrow \NN^{r+2}}} \Fib_{R/A}^{g,\mathbf{h}}
$$
is the \emph{fiber-full scheme} that parametrizes all homogeneous ideals giving a flat quotient with flat local cohomology modules.

\begin{remark}
	Let $I \subset R$ be a homogeneous ideal such that $R/I$ is $A$-flat and $\HL^i(R/I)$ is $A$-flat for all $i \ge 0$.
	Let $g : \NN \rightarrow \NN$ and $\mathbf{h} = (h_0,\ldots,h_{r+1}) : \ZZ^{r+2} \rightarrow \NN^{r+2}$, and suppose that $[I] \in \Fib_{R/A}^{g,\mathbf{h}}$.
	It then follows that $R/I$ has a Hilbert polynomial equal to $P_{g,\mathbf{h}} := g - \sum_{i=0}^{r+1}{(-1)}^ih_i$ (see \cite[Theorem 4.4.3]{BRUNS_HERZOG}).
\end{remark}

\section{Tangent-obstruction theory for the fiber-full scheme}
\label{sect_tan_obs}

In this section, we provide a tangent-obstruction theory for the fiber-full scheme.  
We compute the tangent space and determine some sufficient conditions for the existence of obstructions. 
There is a little bit of overlap as we treat both fiber-full schemes $\Fib_{\PP_\kk^r/\kk}$ (parametrizing closed subschemes) and $\Fib_{R/\kk}$ (parametrizing homogeneous ideals), and the techniques to study both are fairly similar. 
For organizational purposes, we divide the section into two subsections.
Throughout the entire section, we use the following setup. 

\begin{setup}
	\label{setup_tan_obs}
	Let $\kk$ be a field, $R = \kk[x_0,\ldots,x_r]$ be a standard graded polynomial ring, $\PP_{\kk}^r = \Proj(R)$, and $\mm = (x_0,\ldots, x_r) \subset R$ be the graded irrelevant ideal.
	Consider the parameters spaces $\Fib_{\PP_\kk^r/\kk}$ and $\Fib_{R/\kk}$ introduced in \autoref{sect_prelim} and \autoref{sect_hom_fib_full}, respectively. 
\end{setup}

Here are a couple of basic remarks that we will need.

\begin{remark}
	Given a closed subscheme $Z \subset \PP_\kk^r$, the tangent space of  $\Hilb_{\PP_{\kk}^r/\kk}$ at the corresponding point $[Z]$ is given by $T_{[Z]} \Hilb_{\PP_{\kk}^r/\kk} = \Hom_{\PP_\kk^r}(\II_{Z}, \OO_{Z})$.
	Since $\Fib_{\PP_{\kk}^r/\kk}$ is a locally closed subscheme of $\Hilb_{\PP_{\kk}^r/\kk}$, the tangent space $T_{[Z]} \Fib_{\PP_{\kk}^r/\kk}$ of $\Fib_{\PP_{\kk}^r/\kk}$ at $[Z]$ is a $\kk$-vector subspace of $\Hom_{\PP_\kk^r}(\II_{Z}, \OO_{Z})$.
\end{remark}

\begin{remark}
	Given a homogeneous ideal $I \subset R$, the tangent space of  $\HS_{R/\kk}$ at the corresponding point $[I]$ is given by $T_{[I]} \HS_{R/\kk} = \left[\Hom_{R}(I, R/I)\right]_0$.
	Since $\Fib_{R/\kk}$ is a locally closed subscheme of $\HS_{R/\kk}$, the tangent space $T_{[I]} \Fib_{R/\kk}$ of $\Fib_{R/\kk}$ at $[I]$ is a $\kk$-vector subspace of $\left[\Hom_{R}(I, R/I)\right]_0$.
\end{remark}

\begin{remark}[Tangent-obstruction theory of a functor]
	\label{rem_tan_obs}
		We recall from \cite[Chapter 6]{FGA_EXPLAINED} the definition of a tangent-obstruction theory for a deformation functor.
		Let $\FF : (\Art_\kk) \rightarrow (\text{Sets})$ be a deformation functor from the category of Artinian local $\kk$-algebras with residue field $\kk$ to the category of sets. 
		A \emph{tangent-obstruction theory} for $\FF$ is a pair of $\kk$-vector spaces $(\text{T}, \text{Ob})$ such that for every small extension $0 \rightarrow \aaa \rightarrow C' \rightarrow C \rightarrow 0$ in $\Art_\kk$ and every $u \in \FF(C)$ we have:
		\begin{enumerate}
			\item There is an element $\text{ob}(u, C') \in \text{Ob} \otimes_{\kk} \aaa$ such that $\text{ob}(u, C')=0$ if and only if there is some lifting $u' \in \FF(C')$ of $u$.
			\item If $\text{ob}(u, C')=0$, then $T \otimes_{\kk} \aaa$ acts transitively on the set of liftings of $u$ into $C'$. 
		\end{enumerate} 
\end{remark}

\subsection{The fiber-full scheme $\Fib_{R/\kk}$ for homogeneous ideals}
Given a homogeneous ideal $I_0 \subset R$, we consider the \emph{local fiber-full functor} $\fFib_{I_0} : (\Art_\kk) \rightarrow (\text{Sets})$ given by 
$$
\fFib_{I_0}(A) \,:=\, \big\{ I \in \fFib_{R/\kk}(A)  \mid I \otimes_{A} \kk \cong I_0\big\}.$$

We start by giving some preparatory results. 
We  provide a base-change lemma that will be useful, and that is part of the folklore (cf.~\cite[Exercise 6.1.10, Proposition 6.1.11]{Brodmann_Sharp_local_cohom}).

\begin{lemma}
	\label{lem_last_base_change_loc}
	Let $B$ be a Noetherian ring and $S = B[x_0,\ldots,x_r]$ be a standard graded polynomial ring. Let $M$ be a finitely generated graded $S$-module that is flat over $B$. 
	Let $c = \max\big\{ i \mid \HL^i(M) \neq 0 \big\}$.
	Then, we have a base change isomorphism 
	$$
	\HL^i(M) \otimes_B N\,  \xrightarrow{\;\cong\;} \, \HL^i(M \otimes_B N)
	$$
	for any $B$-module $N$ and for all $i \ge c$.
\end{lemma}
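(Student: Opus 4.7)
The plan is to work with the \v{C}ech complex $C^\bullet = C^\bullet(x_0,\ldots,x_r; M)$ computing $\HL^\bullet(M)$, which is concentrated in cohomological degrees $0$ through $r+1$. Two observations set up the problem. First, each term $C^i$ is a finite direct sum of localizations of $M$ at products of the $x_j$'s, hence $C^i$ is $B$-flat because $M$ is. Second, because localization commutes with $-\otimes_B N$, the \v{C}ech complex computing $\HL^\bullet(M \otimes_B N)$ is canonically identified with $C^\bullet \otimes_B N$. Writing $Z^i = \Ker(d^i)$ and $E^i = \IM(d^{i-1})$, the hypothesis forces $Z^i = E^i$ for $i > c$, together with $Z^{r+1} = C^{r+1}$, so the truncated subcomplex in degrees $\ge c$ is exact at every position except $c$.

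The first key step I would take is to establish, by descending induction on $i$ starting from $i = r+1$ and going down to $i = c$, that $Z^i$ is $B$-flat (and consequently that $E^i$ is $B$-flat for $c+1 \le i \le r+1$, since there $E^i = Z^i$). The base case $Z^{r+1} = C^{r+1}$ is immediate. For the inductive step, the short exact sequence
$$0 \to Z^i \to C^i \to E^{i+1} \to 0$$
has $C^i$ flat and $E^{i+1}$ flat by induction (valid for $i+1 > c$, and at $i=c$ using $E^{c+1} = Z^{c+1}$), whence $Z^i$ is $B$-flat.

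With these flatness facts in hand, I would conclude as follows. Tensoring the sequence $0 \to Z^c \to C^c \to E^{c+1} \to 0$ with $N$ preserves exactness (since $E^{c+1}$ is flat), identifying $\Ker(d^c \otimes \Id_N) = Z^c \otimes_B N$. Right-exactness of $-\otimes_B N$ applied to the surjection $C^{c-1} \twoheadrightarrow E^c$ shows that $\IM(d^{c-1} \otimes \Id_N)$ equals the image of the canonical map $E^c \otimes_B N \to Z^c \otimes_B N$, and therefore
$$
\HL^c(M \otimes_B N) \;=\; \mathrm{coker}\!\left(E^c \otimes_B N \to Z^c \otimes_B N\right).
$$
On the other hand, tensoring $0 \to E^c \to Z^c \to \HL^c(M) \to 0$ with $N$ and using $\Tor_1^B(Z^c,N)=0$ (flatness of $Z^c$) identifies this same cokernel with $\HL^c(M) \otimes_B N$. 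The comparison coincides with the natural base change map, proving the isomorphism at $i=c$. For $i > c$ the identical analysis with $Z^i = E^i$ yields $\HL^i(C^\bullet \otimes_B N) = 0$, matching $\HL^i(M) \otimes_B N = 0$.

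The only real subtlety is the direction of the flatness induction: flatness of cycles must be propagated from the top of the complex downward, not upward, because the short exact sequence $0 \to Z^i \to C^i \to E^{i+1} \to 0$ only yields flatness of $Z^i$ once both $C^i$ and $E^{i+1}$ are known to be flat. Once that is arranged, the proof reduces to bookkeeping of cycles and boundaries together with one vanishing of $\Tor_1^B$.
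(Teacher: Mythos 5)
Your proof is correct, but it takes a genuinely different route from the paper's. The paper never touches the \v{C}ech complex directly: it introduces the auxiliary index $c'$, minimal with the property that $\HL^i(M\otimes_B N)=0$ for all $i\ge c'+1$ and \emph{all} $N$, chooses a free presentation $F_1\to F_0\to N\to 0$ with image $K$, and uses the long exact sequence of the ($M$-flatness-induced) short exact sequence $0\to M\otimes_B K\to M\otimes_B F_0\to M\otimes_B N\to 0$ together with the vanishing $\HL^{c'+1}(M\otimes_B K)=0$ to show that $N\mapsto \HL^{c'}(M\otimes_B N)$ is right exact and agrees with $\HL^{c'}(M)\otimes_B(-)$ on free modules, hence everywhere; it then has to argue separately that $c=c'$. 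Your argument instead works entirely at the chain level: the descending induction showing that the cycle modules $Z^i$ are $B$-flat for $i\ge c$ (propagated from $Z^{r+1}=C^{r+1}$ downward via $0\to Z^i\to C^i\to E^{i+1}\to 0$) is exactly the right mechanism, and the subsequent cycle/boundary bookkeeping — flatness of $E^{c+1}$ to identify $\Ker(d^c\otimes\Id_N)$ with $Z^c\otimes_B N$, right-exactness to identify the boundaries, and the vanishing of $\Tor_1^B$ to compare cokernels — is sound, including the degenerate cases $i>c$ where $Z^i=E^i$ forces both sides to vanish. What your version buys is a self-contained proof that handles all $i\ge c$ uniformly and avoids the slightly delicate identification of $c$ with the minimal $c'$; what the paper's version buys is brevity, since it outsources the work to the standard ``right exact functor commuting with direct sums is a tensor product'' principle. (Minor remark: the appeal to $\Tor_1^B(Z^c,N)=0$ in your last step is not actually needed — right-exactness of $-\otimes_B N$ applied to $0\to E^c\to Z^c\to\HL^c(M)\to 0$ already identifies the cokernel — but it does no harm.)
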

\begin{proof}
	Let $c'$ be an integer such that  $\HL^i(M \otimes_B N) = 0$  for all  $i \ge c'+1$ and any $B$-module $N$.
	Fix one such $B$-module $N$.
	Pick a free $B$-presentation $F_1 \rightarrow F_0 \rightarrow N \rightarrow 0$ of $N$, and let $K = \IM(F_1 \rightarrow F_0)$ be the corresponding module of syzygies.
	The $B$-flatness of $M$ yields a short exact sequence $0 \rightarrow M \otimes_B K \rightarrow M \otimes_B F_0 \rightarrow M \otimes_B N \rightarrow 0$, and so we get an exact sequence in cohomology
	$$
	\HL^{c'}(M \otimes_B K) \rightarrow \HL^{c'}(M \otimes_B F_0) \rightarrow \HL^{c'}(M \otimes_B N) \rightarrow \HL^{c'+1}(M \otimes_B K) = 0.
	$$
	This gives the following exact sequence 
	$$
	\HL^{c'}(M) \otimes_B F_1 \rightarrow \HL^{c'}(M) \otimes_B F_0  \rightarrow \HL^{c'}(M \otimes_B N) \rightarrow 0,
	$$
	and as a consequence, the isomorphism $\HL^{c'}(M) \otimes_B N \xrightarrow{\cong} \HL^{c'}(M \otimes_B N)$. 
	
	It is clear that $c \le c'$.
	On the other hand, the already proved isomorphism $\HL^{c'}(M) \otimes_B N \xrightarrow{\cong} \HL^{c'}(M \otimes_B N)$ shows that $c \ge \min\big\{ c' \mid \text{$\HL^i(M \otimes_B N) = 0$  for all  $i \ge c'+1$ and any $B$-module $N$} \big\}$.
	Therefore, the result of the lemma holds.
\end{proof}

The following  lemma gives another equivalent characterization of fiber-fullness.

\begin{lemma}
	\label{lem_fib_full_crit_loc}
	Let $(B, \bb, \kk)$ be a Noetherian local ring and $S = B[x_0,\ldots,x_r]$ be a standard graded polynomial ring.
	Let $I \subset S$ be a homogeneous ideal such that $S/I$ is flat over $B$, and $I_0 = I \otimes_{B} \kk \subset R$ be the homogeneous ideal corresponding with the closed fiber.
	Then, $S/I$ is fiber-full over $B$ if and only if the natural map 
	$$
	\HL^i\left(I\right) \, \longrightarrow \, \HL^i\left(I_0\right)
	$$
	is surjective for all $i \ge 0$.
\end{lemma}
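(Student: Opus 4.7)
The plan is to apply Theorem \ref{thm_fib_full_mod} (in the form of characterization (iii)) to both $I$ and $S/I$, and to chain the two through the long exact sequence of local cohomology coming from $0 \to I \to S \to S/I \to 0$.

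For the direction $(\Rightarrow)$, I would exploit that fiber-fullness of $S/I$ gives, via Lemma \ref{lem_base_ch_fib_full}, a base-change isomorphism $\HL^i(S/I) \otimes_B \kk \cong \HL^i(R/I_0)$, so the natural map $\HL^i(S/I) \to \HL^i(R/I_0)$ is a surjection (the quotient onto the closed fiber). The long exact sequences for $0 \to I \to S \to S/I \to 0$ and $0 \to I_0 \to R \to R/I_0 \to 0$, combined with the vanishing $\HL^i(S) = 0$ for $i \le r$ and the surjection $\HL^{r+1}(S) \twoheadrightarrow \HL^{r+1}(R)$ for the polynomial ring, would then force surjectivity of $\HL^i(I) \to \HL^i(I_0)$ for every $i$: the range $1 \le i \le r$ follows from the LES isomorphisms $\HL^i(I) \cong \HL^{i-1}(S/I)$ and the already-established surjection on $S/I$; the case $i = 0$ is trivial; and $i = r+1$ follows from a four-term-exact-sequence chase using the surjections already in hand.

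For the direction $(\Leftarrow)$, the decisive observation is that, for every $q \ge 1$, the natural map factors as $\HL^i(I) \to \HL^i(I \otimes_B B/\bb^q) \to \HL^i(I_0)$, so surjectivity of the composition forces surjectivity of each $\HL^i(I \otimes_B B/\bb^q) \to \HL^i(I_0)$. Since $I$ is $B$-flat (being the kernel of $S \twoheadrightarrow S/I$ between $B$-flat modules), Theorem \ref{thm_fib_full_mod}(iii) then yields that $I$ is fiber-full over $B$. I would transport this to $S/I$ via the long exact sequence for $0 \to I \otimes_B B/\bb^q \to S \otimes_B B/\bb^q \to S/I \otimes_B B/\bb^q \to 0$: for $0 \le i \le r-1$ the isomorphism $\HL^i(S/I \otimes B/\bb^q) \cong \HL^{i+1}(I \otimes B/\bb^q)$ transports surjectivity onto $\HL^i(R/I_0)$, while for $i = r+1$ surjectivity is automatic as a quotient of the surjection on $\HL^{r+1}$ of the polynomial ring. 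A final application of Theorem \ref{thm_fib_full_mod}(iii) to $S/I$ then concludes.

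The main technical obstacle in both directions is the middle index $i = r$, where the four-term sequence $0 \to \HL^r(S/I) \to \HL^{r+1}(I) \to \HL^{r+1}(S) \to \HL^{r+1}(S/I) \to 0$ does not reduce to an isomorphism. I expect to handle this via a snake-lemma chase on the short exact sequence $0 \to \HL^r(S/I \otimes B/\bb^q) \to \HL^{r+1}(I \otimes B/\bb^q) \to K_q \to 0$, where $K_q$ is the image of $\HL^{r+1}(I \otimes B/\bb^q)$ in $\HL^{r+1}(S \otimes B/\bb^q)$, and its analogue for the closed fiber. The key input will be the top base-change isomorphism $\HL^{r+1}(I) \otimes_B \kk \cong \HL^{r+1}(I_0)$ from Lemma \ref{lem_last_base_change_loc} combined with fiber-fullness of $I$, which lets one identify $K_q \otimes_{B/\bb^q} \kk$ with $K_0$ inside $\HL^{r+1}(R)$ and reduces the required surjectivity to the purity condition $K_q \cap \bb/\bb^q\cdot \HL^{r+1}(S \otimes B/\bb^q) = \bb/\bb^q \cdot K_q$.
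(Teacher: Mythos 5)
Your overall framework (reduce fiber-fullness to surjectivity statements via \autoref{thm_fib_full_mod}, then pass between $S/I$ and $I$ through the long exact sequences coming from $0 \to I \to S \to S/I \to 0$ and the vanishing $\HL^i(S)=0$ for $i \le r$) is sound, and your forward direction is essentially the paper's argument. The backward direction, however, is not closed at precisely the point you flag: the surjectivity of $\HL^r(S/I \otimes_B B/\bb^q) \to \HL^r(R/I_0)$ is never proved, only ``reduced'' to the purity condition $K_q \cap (\bb/\bb^q)\HL^{r+1}(S \otimes_B B/\bb^q) = (\bb/\bb^q)K_q$, which you leave unverified. That condition amounts to injectivity of $K_q \otimes_{B/\bb^q} \kk \to \HL^{r+1}(S \otimes_B B/\bb^q) \otimes_{B/\bb^q} \kk$, and the natural way to check it is to show $\HL^{r+1}(S/I \otimes_B B/\bb^q) = 0$ (so that $K_q$ is all of $\HL^{r+1}(S\otimes_B B/\bb^q)$) --- i.e.\ you need exactly the top-degree vanishing and base-change input that renders the whole snake-lemma detour unnecessary. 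You also implicitly use $\HL^{r+1}(R/I_0)=0$, which fails when $I_0 = 0$; that degenerate case must be split off (it is trivial: $B$-flatness of $S/I$ gives $I \otimes_B \kk \cong I_0 = 0$, and graded Nakayama forces $I=0$), and the paper does so explicitly.

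The gap is repaired by the paper's own move, which you have all the tools for: apply \autoref{lem_last_base_change_loc} directly to $M = S/I$ (only $B$-flatness is needed, no fiber-fullness of $I$). Since $r+1$ is always at least the top nonvanishing cohomological index, the lemma gives $\HL^{r+1}(S/I) \otimes_B \kk \cong \HL^{r+1}(R/I_0)$, which is $0$ once $I_0 \neq 0$, so degreewise Nakayama yields $\HL^{r+1}(S/I) = 0$; then the lemma applies at index $r$ and gives $\HL^{r}(S/I) \otimes_B N \cong \HL^{r}(S/I \otimes_B N)$ for \emph{every} $B$-module $N$. Taking $N = B/\bb^q$ and $N = \kk$ produces the surjectivity $\HL^r(S/I \otimes_B B/\bb^q) \twoheadrightarrow \HL^r(R/I_0)$ unconditionally, and also $\HL^{r+1}(S/I \otimes_B B/\bb^q) = 0$, which makes your purity condition vacuous. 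With this observation the paper in fact avoids the $q$-level bookkeeping and the intermediate step of proving $I$ fiber-full altogether: it first notes that fiber-fullness of $S/I$ is equivalent to surjectivity of $\HL^i(S/I) \to \HL^i(R/I_0)$ for all $i$ (local criterion plus the base-change property, exactly the factorization trick you use), and then the two long exact sequences identify these surjectivities with those of $\HL^i(I) \to \HL^i(I_0)$, the indices $i=r$ and $i=r+1$ on the respective sides being automatic by the above. I recommend restructuring your backward direction along these lines; as written, the $i=r$ step is a genuine gap rather than a routine verification.
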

\begin{proof}
	By the local criterion for fiber-fullness (see \autoref{thm_fib_full_mod}), $S/I$ is fiber-full over $B$ if and only if $\HL^i(S/I \otimes_{B} B/\bb^q)  \rightarrow  \HL^i(R/I_0)$ is surjective for all $i \ge 0, q \ge 1$.
	The base change property of fiber-full modules (see \autoref{lem_base_ch_fib_full}) implies that the latter condition is equivalent to the surjectivity of the map $\HL^i(S/I)  \rightarrow  \HL^i(R/I_0)$ for all $i \ge 0$.
	
	First, suppose that $I_0 = 0$. 
	Since $S/I$ is $B$-flat,   $I = 0$ and the fiber-fullness condition holds.
	
	Next, we assume that $I_0 \neq 0$.
	By \autoref{lem_last_base_change_loc}, $\HL^{r+1}(S/I) \otimes_B \kk \xrightarrow{\cong} \HL^{r+1}(R/I_0)=0$, and then Nakayama's lemma implies $\HL^{r+1}(S/I) = 0$.
	Thus, \autoref{lem_last_base_change_loc} gives the surjectivity of the map $\HL^{r}(S/I)  \rightarrow  \HL^{r}(R/I_0)$.
	From the short exact sequences $0 \rightarrow I \rightarrow  S \rightarrow S/I \rightarrow 0$ and $0 \rightarrow I_0 \rightarrow  R \rightarrow R/I_0 \rightarrow 0$, we obtain the short exact sequences $0 \rightarrow \HL^{r}(S/I) \rightarrow \HL^{r+1}(I) \rightarrow \HL^{r+1}(S) \rightarrow 0$ and $0 \rightarrow \HL^{r}(R/I_0) \rightarrow \HL^{r+1}(I_0) \rightarrow \HL^{r+1}(R) \rightarrow 0$ and the isomorphisms $\HL^{i}(I_0) \cong \HL^{i-1}(R/I_0)$ and $\HL^{i}(I) \cong \HL^{i-1}(S/I)$ for all $i \le r$.
	Therefore, $\HL^i(S/I)  \rightarrow  \HL^i(R/I_0)$ is surjective for all $i \ge 0$ if and only if $\HL^i(I)  \rightarrow  \HL^i(I_0)$ is surjective for all $i \ge 0$.
	This concludes the proof of the lemma.
\end{proof}

The following theorem determines the tangent space of the fiber-full scheme $\Fib_{R/\kk}$.

\begin{theorem} 
	\label{thm_tan_space_loc}
	Assume \autoref{setup_tan_obs}.
	Let $I_0 \subset R$ be a homogeneous ideal. 
	The tangent space of $\Fib_{R/\kk}$ at the corresponding point $[I_0]$ is given by 
	$$
	T_{[I_0]} \Fib_{R/\kk} = \big\{\varphi \in \big[\Hom(I_0,R/I_0)\big]_0 \,\mid\, \HH^i_{\mm}(\varphi) = 0 \text{ for all $i \ge 0$}\big\},
	$$
	where $\HL^i(\varphi)$ denotes the natural map $\HL^i(\varphi) : \HL^i(I_0) \rightarrow \HL^i(R/I_0)$  induced in cohomology.
\end{theorem}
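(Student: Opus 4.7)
The plan is to identify $T_{[I_0]}\Fib_{R/\kk}$ inside
$T_{[I_0]}\HS_{R/\kk} = [\Hom_R(I_0, R/I_0)]_0$, using that $\Fib_{R/\kk}$ sits as a
locally closed subscheme of $\HS_{R/\kk}$. Write $B = \kk[\epsilon]/(\epsilon^2)$,
$S = R[\epsilon]/(\epsilon^2)$, and for $\varphi \in [\Hom_R(I_0, R/I_0)]_0$ let
\[
I_\varphi \,=\, \{\, g + \epsilon h \,\mid\, g \in I_0,\ h \in R,\ \overline{h} = \varphi(g) \in R/I_0 \,\} \,\subset\, S
\]
be the corresponding first-order deformation of $I_0$. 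Then $\varphi$ lies in
$T_{[I_0]}\Fib_{R/\kk}$ if and only if $S/I_\varphi$ is fiber-full over $B$; by
\autoref{lem_fib_full_crit_loc} this happens precisely when
$\HL^i(I_\varphi) \to \HL^i(I_0)$ is surjective for every $i \ge 0$.

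Next, I would observe that $I_\varphi$ is itself $B$-flat (since $S$ and
$S/I_\varphi$ are), so tensoring $I_\varphi$ with the short exact sequence
$0 \to \kk \to B \to \kk \to 0$ produces a canonical short exact sequence of
graded $R$-modules
\[
0 \to I_0 \to I_\varphi \to I_0 \to 0.
\]
Applying the long exact sequence in local cohomology, the surjectivity condition
above becomes equivalent to the vanishing of the connecting maps
$\delta^i : \HL^i(I_0) \to \HL^{i+1}(I_0)$ for all $i \ge 0$.

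The heart of the argument is to identify the extension above with the pullback
of the tautological sequence $0 \to I_0 \to R \to R/I_0 \to 0$ along
$\varphi : I_0 \to R/I_0$. Concretely, the assignment
$g + \epsilon h \mapsto (h, g)$ defines an $R$-linear isomorphism from $I_\varphi$
to the fibered product $R \times_{R/I_0} I_0$ that yields a morphism of short
exact sequences
\[
\begin{tikzcd}
0 \rar & I_0 \rar \dar[equal] & I_\varphi \rar \dar & I_0 \rar \dar["\varphi"] & 0 \\
0 \rar & I_0 \rar & R \rar & R/I_0 \rar & 0.
\end{tikzcd}
\]
By naturality of the connecting map in local cohomology,
$\delta^i = \delta_0^i \circ \HL^i(\varphi)$, where
$\delta_0^i : \HL^i(R/I_0) \to \HL^{i+1}(I_0)$ is the connecting map of the
bottom row.

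To conclude, I would verify that $\delta_0^i$ is injective for every $i \ge 0$.
Since $R$ is Cohen-Macaulay of dimension $r+1$ one has $\HL^j(R) = 0$ for
$0 \le j \le r$, so the long exact sequence attached to
$0 \to I_0 \to R \to R/I_0 \to 0$ makes $\delta_0^i$ injective for $0 \le i \le r$.
For $i \ge r+1$, either $I_0 = 0$ (in which case $T_{[I_0]}\Fib_{R/\kk} = 0$
trivially) or $\dim R/I_0 \le r$ forces $\HL^i(R/I_0) = 0$, so $\delta_0^i$
has zero source and is injective. Therefore $\delta^i = 0$ if and only if
$\HL^i(\varphi) = 0$, giving the stated description of the tangent space. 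I
expect the main technical point to be the pullback identification of
$I_\varphi$, which relies on the standard but slightly delicate dictionary
between first-order deformations of homogeneous ideals and homomorphisms in
$[\Hom_R(I_0, R/I_0)]_0$; once this is in place, the rest is naturality and
bookkeeping with the long exact sequence.
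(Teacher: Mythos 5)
Your proposal is correct and follows essentially the same route as the paper's proof: reduce via \autoref{lem_fib_full_crit_loc} to the surjectivity of $\HL^i(I_\varphi)\to\HL^i(I_0)$, realize $0\to I_0\to I_\varphi\to I_0\to 0$ as the pullback of $0\to I_0\to R\to R/I_0\to 0$ along $\varphi$, and use the vanishing of $\HL^i(R)$ for $i\le r$ to identify the connecting map with $\HL^i(\varphi)$ post-composed with an injection. Your phrasing via naturality of the connecting homomorphism and injectivity of $\delta_0^i$ is just a repackaging of the paper's commutative diagram argument, with the edge cases ($i=r+1$ and $I_0=0$) handled equivalently.
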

\begin{proof} 
Let $\varphi \in T_{[I_0]} \HS_{R/\kk} = \left[\Hom(I_0,R/I_0)\right]_0$. 
It is known that $\varphi$ corresponds to a flat extension of $I_0$ over $\kk[t]/(t^2)$, i.e., the ideal 
$$
I = \left\{ f+ tg \mid f \in I_0 \text{ and } \varphi(f) = \bar{g} \in R/I_0 \right\} \,\subseteq\, R' := R[t]/(t^2).
$$
We need to find a condition that characterizes when $R'/I$ is fiber-full over $\kk[t]/(t^2)$.
By \autoref{lem_fib_full_crit_loc}, it suffices to determine when the natural maps $\HL^i(I) \rightarrow \HL^i(I_0)$ are surjective for all $i \ge 0$.

There is an exact sequence of $R$-modules
$
0 \to I_0 \longrightarrow I \longrightarrow I_0 \to 0
$
where the first map is multiplication by $t$.
 By using $\varphi$ we obtain a map between short exact sequences
\begin{equation*}		
	\begin{tikzpicture}[baseline=(current  bounding  box.center)]
		\matrix (m) [matrix of math nodes,row sep=3.7em,column sep=5.5em,minimum width=2em, text height=1.5ex, text depth=0.25ex]
		{
			0 & I_0 & I &  I_0 & 0  \\
			0 & I_0 & R & R/I_0 & 0. \\
		};						
		\path[-stealth]
		(m-1-1) edge node [above] {} (m-1-2)
		(m-1-2) edge node [above] {$t$} (m-1-3)
		(m-1-3) edge node [above] {} (m-1-4)
		(m-1-4) edge node [above] {} (m-1-5)
		(m-2-1) edge node [above] {} (m-2-2)
		(m-2-2) edge node [above] {} (m-2-3)
		(m-2-3) edge node [above] {} (m-2-4)
		(m-2-4) edge node [above] {} (m-2-5)
		(m-1-2) edge node [right] {=} (m-2-2)
		(m-1-3) edge node [right] {$\varphi'$} (m-2-3)
		(m-1-4) edge node [right] {$\varphi$} (m-2-4)
		;
	\end{tikzpicture}	
\end{equation*}	
Note that this diagram is commutative and $\varphi'$ is given by mapping $f+ t g \mapsto g$.
Taking the associated long exact sequence in cohomology we obtain
\begin{equation*}		
	\begin{tikzpicture}[baseline=(current  bounding  box.center)]
		\matrix (m) [matrix of math nodes,row sep=3.7em,column sep=5.5em,minimum width=2em, text height=1.5ex, text depth=0.25ex]
		{
			\cdots & \HH^i_{\mm}(I) & \HH^i_{\mm}(I_0) &  \HH^{i+1}_{\mm}(I_0) & \cdots  \\
			\cdots & \HH^i_{\mm}(R) & \HH^i_{\mm}(R/I_0) & \HH^{i+1}_{\mm}(I_0) & \cdots. \\
		};						
		\path[-stealth]
		(m-1-1) edge node [above] {} (m-1-2)
		(m-1-2) edge node [above] {} (m-1-3)
		(m-1-3) edge node [above] {} (m-1-4)
		(m-1-4) edge node [above] {} (m-1-5)
		(m-2-1) edge node [above] {} (m-2-2)
		(m-2-2) edge node [above] {} (m-2-3)
		(m-2-3) edge node [above] {} (m-2-4)
		(m-2-4) edge node [above] {} (m-2-5)
		(m-1-2) edge node [right] {} (m-2-2)
		(m-1-3) edge node [right] {$H^i_{\mm}(\varphi)$} (m-2-3)
		(m-1-4) edge node [right] {=} (m-2-4)
		;
	\end{tikzpicture}	
\end{equation*}	
Note that the surjectivity of $\HL^{r+1}(I) \rightarrow \HL^{r+1}(I_0)$ comes for free.
For each $i \le r$, the fact that $\HL^i(R) = 0$ gives the following commutative diagram
$$
\begin{tikzcd}
			\HH^i_{\mm}(I) \arrow[r] \arrow[d] 
				& \HH^i_{\mm}(I_0) \arrow[r] \arrow[d, "\HH^i_{\mm}(\varphi)"]
					&  \HH^{i+1}_{\mm}(I_0) \arrow[d, "="]  \\
			0 \arrow[r] 
				& \HH^i_{\mm}(R/I_0) \arrow[r, hook]
					& \HH^{i+1}_{\mm}(I_0). 
\end{tikzcd}
$$
It then follows that $\HL^i(I) \rightarrow \HL^i(I_0)$ is surjective for all $i \ge 0$ if and only if $\HH^i_{\mm}(\varphi) = 0$ for all $i \ge 0$. 
So, the proof of the theorem is complete.
\end{proof}

We now proceed to study the question of lifting fiber-full deformations over Artinian rings. 
The following result gives an equivalent of \cite[Theorem 6.2]{hartshorne2010deformation} in the setting of the fiber-full scheme $\Fib_{R/\kk}$.

Let $(C', \nn',\kk)$ and $(C, \nn,\kk)$ be Artinian local rings with residue field $\kk$, and suppose we have a short exact sequence 
$$
0 \rightarrow \aaa \rightarrow C' \rightarrow C \rightarrow 0
$$
where $\nn' \aaa = 0$ (this implies, in particular, that $\aaa$ can be considered as a $\kk$-vector space).
Let $S$ and $S'$ be the standard graded polynomial rings $S = C[x_0,\ldots,x_r]$ and $S' = C'[x_0,\ldots,x_r]$.
Let $I \subset S$ be a homogeneous ideal such that $S/I$ is fiber-full over $C$, and denote by $I_0 \subset R$  the homogeneous ideal $I_0 = I \otimes_{C} \kk$ (in other words, $I$ is a fiber-full extension of $I_0$ over $C$).
Now, we wish to classify all the fiber-full extensions of $I$ over $C'$, that is, we seek all homogeneous ideals $I' \subset S'$ such that $S'/I'$ is fiber-full over $C'$ and $I \cong I' \otimes_{C'} C$.
The following theorem achieves this goal. 
In particular, it implies that the local fiber-full functor $\fFib_{I_0} : (\Art_\kk) \rightarrow (\text{Sets})$ satisfies condition (2) of \autoref{rem_tan_obs}.

\begin{theorem}
	\label{thm_small_extensions_loc}
	Assume \autoref{setup_tan_obs} and the notations above. 
	The set of fiber-full extensions of $[I] \in \Fib_{S/C}$ over $C'$
	$$
	\left\lbrace [I'] \in \Fib_{S'/C'} \;\text{ such that }\;  I' \otimes_{C'} C \cong I \right\rbrace
	$$
	is a pseudotorsor under the action of the subgroup of $\big[\Hom_R\left( I_0, R/I_0 \otimes_\kk \aaa \right)\big]_0$ given by
	$$
	\big\lbrace \varphi \in \big[\Hom_R\left( I_0, R/I_0 \otimes_\kk \aaa \right)\big]_0 \;\mid\; \HL^i(\varphi)=0 \text{ for all $i \ge 0$} \big\rbrace, 
	$$
	where $\HL^i(\varphi)$ denotes the natural map $\HL^i(\varphi) : \HL^i(I_0) \rightarrow \HL^i(R/I_0 \otimes_\kk \aaa)$ induced in cohomology.
\end{theorem}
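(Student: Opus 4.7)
The plan is to adapt the strategy of \autoref{thm_tan_space_loc} to the relative setting of the small extension $0 \to \aaa \to C' \to C \to 0$. If the set of fiber-full extensions is empty, the pseudotorsor statement holds vacuously, so I would fix a fiber-full extension $I'_1$ as basepoint. By the deformation theory of the multigraded Hilbert scheme in the relative setting (compare \cite[Theorem 6.2]{hartshorne2010deformation}), the set of all \emph{flat} extensions of $I$ to $C'$ is already a torsor under $\bigl[\Hom_R(I_0, R/I_0 \otimes_\kk \aaa)\bigr]_0$; with respect to the basepoint $I'_1$, every flat extension has the form $I'_\varphi$ for a unique such $\varphi$. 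Thus the statement reduces to showing that $I'_\varphi$ is fiber-full over $C'$ if and only if $\HL^i(\varphi) = 0$ for all $i \geq 0$.

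Next, I would use that each flat extension yields a short exact sequence of $R$-modules (using $\nn'\aaa = 0$)
$$ 0 \to I_0 \otimes_\kk \aaa \to I'_\varphi \to I \to 0, $$
producing a connecting map $\delta^\varphi_i \colon \HL^i(I) \to \HL^{i+1}(I_0) \otimes_\kk \aaa$ in local cohomology (here I use that $\aaa$ is a finite-dimensional $\kk$-vector space, so $\HL^i$ commutes with $-\otimes_\kk \aaa$). Combining \autoref{lem_fib_full_crit_loc} with the fiber-fullness of $I$ over $C$ (which gives $\HL^i(I) \twoheadrightarrow \HL^i(I_0)$), one deduces that $I'_\varphi$ is fiber-full over $C'$ if and only if $\delta^\varphi_i = 0$ for all $i \geq 0$.

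The main step is then to show that the vanishing of $\delta^\varphi_i$ is equivalent to the vanishing of $\HL^i(\varphi)$. Following the template of \autoref{thm_tan_space_loc}, I would construct a morphism of short exact sequences
$$
\begin{tikzcd}
0 \arrow[r] & I_0 \otimes_\kk \aaa \arrow[r] \arrow[d, equal] & I'_\varphi \arrow[r] \arrow[d, "\pi_\varphi"] & I \arrow[r] \arrow[d, "\tilde\varphi"] & 0 \\
0 \arrow[r] & I_0 \otimes_\kk \aaa \arrow[r] & R \otimes_\kk \aaa \arrow[r] & R/I_0 \otimes_\kk \aaa \arrow[r] & 0,
\end{tikzcd}
$$
where $\tilde\varphi$ is the composition $I \twoheadrightarrow I_0 \xrightarrow{\varphi} R/I_0 \otimes_\kk \aaa$ (factoring through the reduction $I \otimes_C \kk = I_0$ because the target is $\nn$-annihilated), and $\pi_\varphi$ is produced from the torsor description of $I'_\varphi$ by sending an element to the $\aaa$-part of its difference from a chosen lift in $I'_1$, in exact analogy with the map $f + tg \mapsto g$ used in the proof of \autoref{thm_tan_space_loc}. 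Since $\HL^i(R) = 0$ for $i \leq r$, the bottom connecting map $\HL^i(R/I_0) \otimes_\kk \aaa \to \HL^{i+1}(I_0) \otimes_\kk \aaa$ is injective in this range, and commutativity of the induced diagram in local cohomology forces $\delta^\varphi_i = 0 \Longleftrightarrow \HL^i(\varphi) = 0$ for $0 \leq i \leq r$; the top degree $i = r+1$ is handled as in \autoref{thm_tan_space_loc}, where the surjectivity condition is automatic.

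The main obstacle will be the precise construction of $\pi_\varphi$ and the verification that it correctly realizes the torsor action of $\varphi$ at the level of connecting maps; once this book-keeping is in place, the rest is a diagram chase essentially identical to that used for the tangent space computation. A secondary point is to check both halves of the pseudotorsor claim symmetrically: that the action by a $\varphi$ with $\HL^i(\varphi) = 0$ preserves fiber-fullness, and that the unique $\varphi$ relating any two fiber-full extensions lies in the stated subgroup; both fall out of the same cohomological comparison once it has been established.
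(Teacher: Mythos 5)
Your overall strategy coincides with the paper's: reduce to the pseudotorsor of flat extensions from \cite[Theorem 6.2]{hartshorne2010deformation}, use \autoref{lem_fib_full_crit_loc} together with the fiber-fullness of $I$ over $C$ to translate fiber-fullness of a flat extension into surjectivity of $\HL^i(I'_\varphi)\to\HL^i(I)$ (equivalently, vanishing of the connecting map), and then detect that vanishing by mapping the extension sequence into one whose middle term has vanishing local cohomology. The one step that does not work as written --- and which you yourself flag as ``the main obstacle'' --- is the construction of the middle vertical map $\pi_\varphi\colon I'_\varphi\to R\otimes_\kk\aaa$. In the tangent-space computation of \autoref{thm_tan_space_loc} the analogous map exists because $R[t]/(t^2)$ splits $R$-linearly as $R\oplus tR$, so ``the $t$-part'' is canonical. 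For a general small extension $0\to\aaa\to C'\to C\to 0$ there is no such splitting, so ``the $\aaa$-part of the difference from a chosen lift in $I'_1$'' is only well defined modulo $\iota_1(I\otimes_C\aaa)$; the honestly well-defined map out of $I'_\varphi$ lands in $S/I\otimes_C\aaa\cong R/I_0\otimes_\kk\aaa$ and annihilates the submodule $I_0\otimes_\kk\aaa$, so the left-hand square of your ladder (which requires the identity on $I_0\otimes_\kk\aaa$) cannot commute. Without that square the identity $\delta^\varphi_i=\delta^{\mathrm{bottom}}_i\circ\HL^i(\tilde\varphi)$, on which your whole comparison rests, has no proof.

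The paper's fix is to replace $I'_\varphi$ by the fiber product $\Delta=\big\{(x_1,x_2,x)\in I'_1\oplus I'_2\oplus I\;\mid\;\pi_1(x_1)=x=\pi_2(x_2)\big\}$: on $\Delta$ the difference $(x_1,x_2,x)\mapsto x_1-x_2$ genuinely lands in $S\otimes_C\aaa$ and is $S'$-linear, yielding a commutative ladder from $0\to(I\otimes_C\aaa)^{\oplus 2}\to\Delta\to I\to 0$ down to $0\to I\otimes_C\aaa\to S\otimes_C\aaa\to S/I\otimes_C\aaa\to 0$. The price is an extra step you would also need: showing that $\HL^i(\Delta)\to\HL^i(I)$ is surjective if and only if $\HL^i(I'_2)\to\HL^i(I)$ is, which uses the injectivity of $\HL^{i+1}(I\otimes_C\aaa)\to\HL^{i+1}(I'_1)$ furnished by the fiber-fullness of the basepoint $I'_1$. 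If you either adopt this diagonal construction, or instead invoke the additivity of connecting homomorphisms under the Baer-type description of the torsor action (which is exactly what the diagonal diagram proves), your argument closes up; the remaining points --- injectivity of the bottom connecting map for $i\le r$ because $\HL^i(R\otimes_\kk\aaa)=0$, the automatic vanishing in degree $r+1$, and the two halves of the pseudotorsor claim --- are correct as you describe them.
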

\begin{proof}
	First, we recall that the set of all flat extensions of $I \subset S$ over $C'$ 
	$$
	\left\lbrace [I'] \in \HS_{S'/C'} \;\text{ such that }\;  I' \otimes_{C'} C \cong I \right\rbrace
	$$
	is a pseudotorsor under the action of the group of $\big[\Hom_R\left( I_0, R/I_0 \otimes_\kk \aaa \right)\big]_0$ (see, e.g., \cite[Theorem 6.2]{hartshorne2010deformation}).
	
	Let $\left[I_1'\right], \left[I_2'\right] \in \HS_{S'/C'}$ be two possible flat extensions of $I \subset S$ over $C'$.
	We then get the following short exact sequences 
	$
	0 \rightarrow I \otimes_C \aaa \xrightarrow{\iota_1} I_1' \xrightarrow{\pi_1} I \rightarrow 0
	$
	 and 
	$0 \rightarrow I \otimes_C \aaa \xrightarrow{\iota_2} I_2' \xrightarrow{\pi_2} I \rightarrow 0.
	$
	We consider the diagonal $S'$-module given by 
 $\Delta := \big\lbrace (x_1,x_2,x) \in  I_1' \oplus I_2' \oplus I  \, \mid \, \pi_1(x_1) = x = \pi_2(x_2) \big\rbrace$.
We have the following commutative diagram
	\begin{equation*}		
		\begin{tikzpicture}[baseline=(current  bounding  box.center)]
			\matrix (m) [matrix of math nodes,row sep=3.5em,column sep=3.5em,minimum width=2em, text height=1.5ex, text depth=0.25ex]
			{
				0 & \left(I \otimes_C \aaa\right) \oplus \left(I \otimes_C \aaa\right)  & \Delta &  I & 0  \\
				0 & I \otimes_C \aaa & S \otimes_C \aaa &  S/I \otimes_C \aaa & 0,  \\
			};						
			\path[-stealth]
			(m-1-1) edge node [above] {} (m-1-2)
			(m-1-2) edge node [above] {$(\iota_1,\iota_2,0)$} (m-1-3)
			(m-1-3) edge node [above] {$\pi$} (m-1-4)
			(m-1-4) edge node [above] {} (m-1-5)
			(m-2-1) edge node [above] {} (m-2-2)
			(m-2-2) edge node [above] {} (m-2-3)
			(m-2-3) edge node [above] {} (m-2-4)
			(m-2-4) edge node [above] {} (m-2-5)
			(m-1-2) edge node [right] {} (m-2-2)
			(m-1-3) edge node [right] {$\varphi'$} (m-2-3)
			(m-1-4) edge node [right] {$\varphi$} (m-2-4)
			;
		\end{tikzpicture}	
	\end{equation*}	
	where $\pi : \Delta \subset I_1' \oplus I_2' \oplus I \rightarrow I$ is the natural projection and $\varphi' : \Delta \rightarrow  \OO_{\PP_{C'}^r}$ is induced by the difference $\gamma_1 - \gamma_2$ of the natural injections $\gamma_1 : I_1' \hookrightarrow S'$ and $\gamma_2 : I_2' \hookrightarrow S'$.
	Indeed, there is a short exact sequence 
	$0 \rightarrow S \otimes_C \aaa \rightarrow S' \rightarrow S \rightarrow 0$, and since the image of $\varphi'$ goes to zero in $S$, we get the claimed map $\varphi' : \Delta \rightarrow S \otimes_C \aaa$. 
	Then $\varphi'$ induces the map $\varphi : I \rightarrow S/I \otimes_{C} \aaa$. 
	
	Recall that, after fixing $I_1' \subset S'$, it is equivalent to have the other extension $I_2' \subset S'$ or the map $\varphi$.
	
	Fix $0 \le i \le r$.
	From the above diagram, we obtain the following diagram with exact rows
	\begin{equation*}		
		\begin{tikzpicture}[baseline=(current  bounding  box.center)]
			\matrix (m) [matrix of math nodes,row sep=3.5em,column sep=1.2em,minimum width=2em, text height=1.5ex, text depth=0.25ex]
			{
				\HL^i\big(\left(I \otimes_C \aaa\right)^2\big) &  \HL^i(\Delta)  & \HL^i(I) &  \HL^{i+1}\big(\left(I \otimes_C \aaa\right)^2 \big) & \HL^{i+1}(\Delta) \\
				\HL^i\left(I \otimes_C \aaa\right) &  \HL^i(S \otimes_C \aaa) =0  & \HL^i(S/I \otimes_C \aaa)  & \HL^{i+1}\left(I \otimes_C \aaa\right).   \\
			};						
			\path[-stealth]
			(m-1-1) edge node [above] {} (m-1-2)
			(m-1-2) edge node [above] {} (m-1-3)
			(m-1-3) edge node [above] {$\Psi$} (m-1-4)
			(m-1-4) edge node [above] {$\Phi$} (m-1-5)
			(m-2-1) edge node [above] {} (m-2-2)
			(m-2-2) edge node [above] {} (m-2-3)
			(m-2-3) edge node [above] {} (m-2-4)
			(m-1-1) edge node [right] {} (m-2-1)
			(m-1-2) edge node [right] {} (m-2-2)
			(m-1-3) edge node [right] {$\HL^i(\varphi)$} (m-2-3)
			(m-1-4) edge node [right] {} (m-2-4)
			;
		\end{tikzpicture}	
	\end{equation*}	
	Now assume that $S'/I_1'$ is fiber-full over $C'$.
	It follows that  $\HL^i(\pi_1) : \HL^i(I_1') \rightarrow \HL^i(I)$ is surjective, and so from the  sequence in cohomology induced by 0 $\rightarrow I \otimes_C \aaa \xrightarrow{\iota_1} I_1' \xrightarrow{\pi_1} I \rightarrow 0$, we have that the induced map $\HL^{i+1}(\iota_1): \HL^{i+1}(I \otimes_C \aaa) \rightarrow \HL^{i+1}(I_1')$ is injective.
	Hence, we get the commutative diagram 
	\begin{equation*}		
		\begin{tikzpicture}[baseline=(current  bounding  box.center)]
			\matrix (m) [matrix of math nodes,row sep=3.5em,column sep=3em,minimum width=2em, text height=1.5ex, text depth=0.25ex]
			{
				\HL^{i+1}(I \otimes_C \aaa) &  & \HL^{i+1}(I_1')   \\
				\HL^{i+1}\big((I \otimes_C \aaa)^2\big) &  \HL^{i+1}(\Delta)  & \HL^{i+1}(I_1') \oplus \HL^{i+1}(I_2').   \\
			};						
			\path[-stealth]
			(m-2-1) edge node [above] {$\Phi$} (m-2-2)
			(m-2-2) edge (m-2-3)
			(m-2-3) edge (m-1-3)
			;
			\draw[right hook->] (m-1-1)--(m-1-3) node [midway,above] {$\HL^{i+1}(\iota_1)$};
			\draw[right hook->] (m-1-1)--(m-2-1);
		\end{tikzpicture}	
	\end{equation*}	
	As a consequence, we have $\IM(\Psi) = \Ker(\Phi) \subset 0 \,\oplus \, \HL^{i+1}(I \otimes_C \aaa) \subset \HL^{i+1}\big((I \otimes_C \aaa)^2\big)$.
	So, we obtain the next simplified diagram 
	\begin{equation*}		
		\begin{tikzpicture}[baseline=(current  bounding  box.center)]
			\matrix (m) [matrix of math nodes,row sep=3.5em,column sep=3.2em,minimum width=2em, text height=1.5ex, text depth=0.25ex]
			{
				\HL^i(\Delta)  & \HL^i(I) &  0\, \oplus\, \HL^{i+1}\left(I \otimes_C \aaa\right)   \\
				0 & \HL^i(S/I \otimes_C \aaa)  & \HL^{i+1}\left(I \otimes_C \aaa\right).   \\
			};						
			\path[-stealth]
			(m-1-1) edge (m-1-2)
			(m-1-1) edge (m-2-1)
			(m-2-1) edge (m-2-2)
			(m-1-2) edge node [above] {$\Psi$} (m-1-3)
			(m-1-3) edge node [right] {$\cong$} (m-2-3)
			(m-1-2) edge node [right] {$\HL^i(\varphi)$} (m-2-2)
			;
			\draw[right hook->] (m-2-2)--(m-2-3);	
		\end{tikzpicture}	
	\end{equation*}	
	It now follows that $\HL^i(\varphi)=0$ if and only if the map $\HL^i(\Delta)  \rightarrow \HL^i(I)$ is surjective.
	
	We claim that $\HL^i(\Delta)  \rightarrow \HL^i(I)$ is surjective if and only if $\HL^i(\pi_2): \HL^i(I_2')  \rightarrow \HL^i(I)$ is surjective.
	Since $\pi : \Delta \rightarrow I$ factors as $\Delta \rightarrow I_2' \xrightarrow{\pi_2} I$, it follows that $\HL^i(\pi_2)$ is surjective when $\HL^i(\Delta)  \rightarrow \HL^i(I)$ is.
	On the other hand, the injection $\iota_1 \oplus \iota_2 : \left(I \otimes_C \aaa\right) \oplus \left(I \otimes_C \aaa\right) \hookrightarrow I_1' \oplus I_2'$ factors as 
	$$
	\left(I \otimes_C \aaa\right) \oplus \left(I \otimes_C \aaa\right)  \xrightarrow{(\iota_1,\iota_2,0)}  \Delta \rightarrow I_1' \oplus I_2', 
	$$
	and so the injectivity of  $\HL^{i+1}(\iota_1)$ and $\HL^{i+1}(\iota_2)$ implies the injectivity of $\Phi : \HL^{i+1}\big((I \otimes_C \aaa)^2\big) \rightarrow  \HL^{i+1}(\Delta)$. 
	This shows the claim. 
	
	Finally, under assumption that $S'/I_1'$ is fiber-full over $C'$, we have shown that $\HL^i(\pi_2): \HL^i(I_2')  \rightarrow \HL^i(I)$ is surjective for all $i \ge 0$ if and only if $\HL^i(\varphi)=0$ for all $i \ge 0$.
	Note that by \autoref{lem_fib_full_crit_loc}, $\HL^i(I) \rightarrow \HL^i(I_0)$ is surjective for all $i \ge 0$.
	So, the result of the theorem now follows from \autoref{lem_fib_full_crit_loc}.
\end{proof}

We  concentrate on the obstructions of the fiber-full scheme $\Fib_{R/\kk}$.
We determine sufficient conditions for the existence of obstructions. 
So, for the local fiber-full functor $\fFib_{I_0} : (\Art_\kk) \rightarrow (\text{Sets})$, we only obtain a partial answer for part (1) of \autoref{rem_tan_obs}.

\begin{theorem}
	\label{thm_obs_loc}
	Assume \autoref{setup_tan_obs}.
	Let $I_0 \subset R$ be a homogeneous ideal. 
	Let $0 \rightarrow \aaa \rightarrow C' \rightarrow C \rightarrow 0$ be a small extension in $\Art_\kk$ and $I \in \fFib_{I_0}(C)$.
	There are elements $\ob(I, C') \in \big[\Ext_R^1(I_0, R/I_0 \otimes_{\kk} \aaa)\big]_0$ and $\ob_i(I, C') \in \big[\Ext_R^2\left(\HL^i(R/I_0), \HL^i(R/I_0) \otimes_{\kk} \aaa\right)\big]_0$ for all $0 \le i \le r+1$ such that, if 
	$$
	\ob(I, C') \neq 0 \quad \text{ or } \quad \ob_i(I, C') \neq 0 \text{ for some } 0 \le i \le r+1,
	$$
	then there is no lifting $I' \in \fFib_{I_0}(C')$ of $I$.
\end{theorem}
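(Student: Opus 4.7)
The plan is to split the obstruction into two types: one controlling the underlying flat lifting, and, for each $i$, one controlling the flat lifting of the $i$-th local cohomology module. Vanishing of all these classes will be shown to be necessary for the existence of a fiber-full lifting, which gives the contrapositive of the theorem.

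First, I would construct $\ob(I, C')$ by applying the classical tangent-obstruction theory for the multigraded Hilbert scheme $\HS_{S/C}$ to the small extension $0 \to \aaa \to C' \to C \to 0$. Following the pattern in \cite[Chapter 6]{hartshorne2010deformation} in the graded setting, one obtains a class $\ob(I, C') \in \big[\Ext^1_R(I_0, R/I_0 \otimes_{\kk} \aaa)\big]_0$ whose vanishing is equivalent to the existence of a flat lifting $I' \subset S' = C'[x_0,\ldots,x_r]$ of $I$. Hence if $\ob(I, C') \neq 0$, no flat lifting of $I$ exists, let alone a fiber-full one.

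Next, for each $0 \le i \le r+1$, I would exploit that, since $I \in \fFib_{I_0}(C)$, the graded $R \otimes_{\kk} C$-module $M_i := \HL^i(S/I)$ is $C$-flat with closed fiber $M_i \otimes_C \kk \cong \HL^i(R/I_0)$ (by \autoref{lem_base_ch_fib_full}). The standard tangent-obstruction theory for flat deformations of graded modules across a small extension — carried out by lifting the differentials of a graded free $R$-resolution of $\HL^i(R/I_0)$ to $R \otimes_{\kk} C'$ and measuring the failure of the lifts to compose to zero modulo $\aaa$ — yields a class
$$
\ob_i(I, C') \in \big[\Ext^2_R\big(\HL^i(R/I_0),\, \HL^i(R/I_0) \otimes_{\kk} \aaa\big)\big]_0,
$$
whose vanishing is equivalent to the existence of a $C'$-flat graded $R \otimes_\kk C'$-module restricting to $M_i$.

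Finally, I would verify necessity. Suppose a fiber-full lifting $I' \in \fFib_{I_0}(C')$ of $I$ exists. The $C'$-flatness of $S'/I'$ forces $\ob(I, C') = 0$ directly. Moreover, the fiber-fullness of $S'/I'$ combined with the base-change isomorphism $\HL^i(S'/I') \otimes_{C'} C \cong \HL^i(S'/I' \otimes_{C'} C) = M_i$ from \autoref{lem_base_ch_fib_full} exhibits $\HL^i(S'/I')$ as a $C'$-flat lift of $M_i$; hence $\ob_i(I, C') = 0$ for every $i$. Taking the contrapositive yields the statement of the theorem.

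The main obstacle is largely bookkeeping: making the classical unobstructed-lifting constructions of \cite{hartshorne2010deformation} transport consistently into the graded, relative setting, so that both $\ob$ and each $\ob_i$ land canonically in the degree-zero piece of the graded $\Ext$ and behave correctly under the base change $C' \to C$. Once this formal work is in place, the three steps above assemble into the proof, and no converse (sufficiency of vanishing) is needed because the theorem only asserts necessity.
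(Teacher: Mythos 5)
Your proposal is correct and follows essentially the same route as the paper: the class $\ob(I,C')$ comes from the standard obstruction theory for (multigraded) Hilbert schemes, each $\ob_i(I,C')$ is the standard obstruction to lifting the $C$-flat module $\HL^i(S/I)$ over the small extension, and the key point in both arguments is that a fiber-full lifting $I'$ produces, via the base-change isomorphism of \autoref{lem_base_ch_fib_full}, a $C'$-flat lifting of $\HL^i(S/I)$, forcing every $\ob_i$ to vanish. The paper simply cites references for the two obstruction constructions rather than sketching the resolution-lifting argument, but the logic is identical.
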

\begin{proof}
	There is an element $\ob(I, C') \in \big[\Ext_R^1(I_0, R/I_0 \otimes_{\kk} \aaa)\big]_0$ such that $\ob(I, C') = 0$ if and only if there is a flat extension of $I$ into $C'$ (see, e.g., \cite[\S I.2]{RAT_CURVES_KOLLAR}).
	If $I' \in \fFib_{I_0}(C')$ is a lifting of $I$, then \autoref{lem_base_ch_fib_full} gives short exact sequences $0 \rightarrow \HL^i(S/I) \otimes_{\kk} \aaa \rightarrow \HL^i(S'/I') \rightarrow \HL^i(S/I) \rightarrow 0$; this means that $\HL^i(S'/I')$ is a lifting of $\HL^i(S/I)$.
	For each $0 \le i \le r+1$, there is an element $\ob_i(I, C') \in \big[\Ext_R^2\left(\HL^i(R/I_0), \HL^i(R/I_0) \otimes_{\kk} \aaa\right)\big]_0$ such that $\ob_i(I, C')=0$ if and only if there is a lifting of $\HL^i(S/I)$ (see, e.g., \cite[Exercise 7.4]{hartshorne2010deformation}, \cite[Proposition 4.2]{DEFORM_RUNAR}, \cite[\href{https://stacks.math.columbia.edu/tag/08VR}{Tag 08VR}]{stacks-project}).
	So, the proof of the theorem is complete.
\end{proof}

\begin{remark}
	\label{remark_quest_obs_loc}
	We expect that  \autoref{thm_obs_loc}  does indeed provide an obstruction space for the local fiber-full functor $\fFib_{I_0}$.
\end{remark}

Now, we discuss the pro-representability of the local fiber-full functor $\fFib_{I_0} : (\Art_\kk) \rightarrow (\text{Sets})$.

\begin{remark}
	The pro-representability of the local functor $\fFib_{I_0}$ follows directly from the representability of the global functor $\fFib_{R/\kk}$ (see, e.g., \cite[Proposition 23.3]{hartshorne2010deformation}).
	Alternatively, one may check directly Schlessinger's criterion (see \cite[Theorem 16.4]{hartshorne2010deformation}), where the most difficult condition to verify $(H_4)$ is already given to us by \autoref{thm_small_extensions_loc}.
\end{remark}

\subsection{The fiber-full scheme $\Fib_{\PP_{\kk}^r/\kk}$ for closed subschemes}
\label{subsect_fib_full_sch}
Given a closed subscheme $Z_0 \subset \PP_{\kk}^r$, we consider the \emph{local fiber-full functor} $\fFib_{Z_0} : (\Art_\kk) \rightarrow (\text{Sets})$ given by 
$$
\fFib_{Z_0}(A) \,:=\, \big\{ Z \in \fFib_{\PP_{\kk}^r/\kk}(A)  \,\mid\, Z \times_{\Spec(A)} \Spec(\kk) \cong Z_0\big\}.$$

We begin with a lemma for sheaf cohomology that is the equivalent of \autoref{lem_last_base_change_loc}.

\begin{lemma}
	\label{lem_last_base_change}
	Let $B$ be a Noetherian ring and $\FF$ be a coherent sheaf on $\PP_B^r$ that is flat over $B$. 
	Let $c = \max\big\{ i \mid \HH^i(\PP_B^r, \FF) \neq 0 \big\}$.
	Then, we have a base change isomorphism 
	$$
	\HH^i(\PP_B^r, \FF) \otimes_B N\,  \xrightarrow{\;\cong\;} \, \HH^i(\PP_B^r, \FF \otimes_B N)
 	$$
 	for any $B$-module $N$ and for all $i \ge c$. 
\end{lemma}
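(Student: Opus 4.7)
The plan is to adapt the proof of \autoref{lem_last_base_change_loc} essentially verbatim, replacing local cohomology $\HL^i$ of an $R$-module by sheaf cohomology $\HH^i(\PP_B^r,-)$ of the corresponding coherent sheaf. The three ingredients I need are: Grothendieck vanishing, so that $\HH^i(\PP_B^r,\GG)=0$ for $i>r$ and every quasi-coherent $\GG$; commutativity of $\HH^i(\PP_B^r,-)$ with arbitrary direct sums of quasi-coherent sheaves, which holds because $\PP_B^r$ is noetherian; and $B$-flatness of $\FF$, which ensures that $\FF\otimes_B -$ preserves short exact sequences of $B$-modules.

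First I would fix an integer $c'$ such that $\HH^i(\PP_B^r,\FF\otimes_B N)=0$ for all $i>c'$ and every $B$-module $N$; Grothendieck vanishing shows that $c'=r$ is a valid initial choice. Given any $B$-module $N$, I pick a free presentation $F_1\to F_0\to N\to 0$ and set $K=\IM(F_1\to F_0)$. Flatness of $\FF$ produces a short exact sequence of sheaves $0\to \FF\otimes_B K\to \FF\otimes_B F_0\to \FF\otimes_B N\to 0$, whose long exact sequence in cohomology, combined with $\HH^{c'+1}(\PP_B^r,\FF\otimes_B K)=0$, gives the right-exact sequence
$$
\HH^{c'}(\PP_B^r,\FF\otimes_B F_1)\to \HH^{c'}(\PP_B^r,\FF\otimes_B F_0)\to \HH^{c'}(\PP_B^r,\FF\otimes_B N)\to 0.
$$
Since cohomology commutes with direct sums of quasi-coherent sheaves on the noetherian scheme $\PP_B^r$, the first two terms are canonically $\HH^{c'}(\PP_B^r,\FF)\otimes_B F_1$ and $\HH^{c'}(\PP_B^r,\FF)\otimes_B F_0$; comparing with the right-exact sequence obtained by applying $\HH^{c'}(\PP_B^r,\FF)\otimes_B -$ to the presentation of $N$ yields the base change isomorphism $\HH^{c'}(\PP_B^r,\FF)\otimes_B N\xrightarrow{\cong}\HH^{c'}(\PP_B^r,\FF\otimes_B N)$.

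Finally I descend from $c'$ to $c$. By construction $c\le c'$. If $c'>c$, then $\HH^{c'}(\PP_B^r,\FF)=0$, and the isomorphism just established forces $\HH^{c'}(\PP_B^r,\FF\otimes_B N)=0$ for every $B$-module $N$, so $c'-1$ is again an allowable bound; iterating, I reach $c'=c$, obtaining base change at $i=c$. For $c<i\le r$ both sides vanish (the left by definition of $c$, the right by the iteration), and for $i>r$ both sides vanish by Grothendieck, covering all $i\ge c$. The main technical point is the interchange $\HH^i(\PP_B^r,\FF\otimes_B F)\cong \HH^i(\PP_B^r,\FF)\otimes_B F$ for possibly infinite free $F$; this is the place where noetherianness of $\PP_B^r$ is used crucially, and it is the step that requires the most care to justify, but it is a standard fact once one appeals to the commutation of cohomology with filtered colimits of quasi-coherent sheaves on a noetherian scheme.
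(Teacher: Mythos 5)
Your proof is correct and follows exactly the route the paper intends: the paper's own proof of this lemma is just the one-line remark that it ``follows along the same lines of'' the local-cohomology version (\autoref{lem_last_base_change_loc}), and your argument is precisely that adaptation, with the free presentation, the descending induction on $c'$ starting from Grothendieck vanishing, and the commutation of cohomology with direct sums on the Noetherian scheme $\PP_B^r$ all correctly identified.
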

\begin{proof}
	The proof follows along the same lines of \autoref{lem_last_base_change_loc}.
\end{proof}

The next lemma gives another equivalent description of fiber-full schemes.

\begin{lemma}
	\label{lem_fib_full_crit}
	Let $(B, \bb, \kk)$ be a Noetherian local ring.
	Let $Z \subset \PP_B^r$ be a closed subscheme that is flat over $B$, and $Z_0 = Z \times_{\Spec(B)} \Spec(\kk) \subset \PP_\kk^r$ be the closed subscheme corresponding with the closed fiber.
	Then, $Z$ is fiber-full over $B$ if and only if the following natural map 
	$$
	\HH^i\left(\PP_B^r, \II_{Z}(\nu)\right) \, \longrightarrow \, \HH^i\left(\PP_\kk^r, \II_{Z_0}(\nu)\right)
	$$
	is surjective for all $i \ge 1$ and $\nu \in \ZZ$.
\end{lemma}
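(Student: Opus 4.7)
The plan is to translate the argument of \autoref{lem_fib_full_crit_loc} from local cohomology of graded $R$-modules to sheaf cohomology on $\PP_B^r$, using \autoref{thm_fib_full_shv} (the sheaf version of the fiber-fullness criterion) in place of \autoref{thm_fib_full_mod}, and \autoref{lem_base_ch_fib_full}(ii) in place of \autoref{lem_base_ch_fib_full}(i). Since $(B,\bb,\kk)$ is local, I would apply \autoref{thm_fib_full_shv}(iii) at the maximal ideal, which asserts that $Z$ is fiber-full over $B$ if and only if
$$
\HH^i\bigl(\PP_B^r,\, \OO_Z(\nu) \otimes_B B/\bb^q\bigr) \;\longrightarrow\; \HH^i\bigl(\PP_\kk^r,\, \OO_{Z_0}(\nu)\bigr)
$$
is surjective for all $i\ge 0$, $q\ge 1$, $\nu\in\ZZ$. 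A routine induction on $q$ (using $B$-flatness of $\OO_Z$, the short exact sequences attached to $0 \to \bb^{q-1}/\bb^q \to B/\bb^q \to B/\bb^{q-1} \to 0$, and Nakayama) together with \autoref{lem_base_ch_fib_full}(ii) then reduces this to the single condition that $\HH^i(\PP_B^r, \OO_Z(\nu)) \to \HH^i(\PP_\kk^r, \OO_{Z_0}(\nu))$ is surjective for all $i\ge 0$ and $\nu\in\ZZ$.

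Having reduced to a statement about the structure sheaf, I would then translate surjectivity for $\OO_Z$ into surjectivity for $\II_Z$ by comparing the long exact cohomology sequences attached to $0 \to \II_Z \to \OO_{\PP_B^r} \to \OO_Z \to 0$ and $0 \to \II_{Z_0} \to \OO_{\PP_\kk^r} \to \OO_{Z_0} \to 0$. Since $\HH^i(\PP_B^r, \OO_{\PP_B^r}(\nu)) = 0$ for $1\le i\le r-1$, this yields isomorphisms $\HH^i(\II_Z(\nu)) \cong \HH^{i-1}(\OO_Z(\nu))$ for $2\le i\le r-1$, together with four-term exact sequences at the boundary indices $i=1$ and $i=r$. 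The groups $\HH^0$ and $\HH^r$ of $\OO_{\PP_B^r}(\nu)$ are $B$-free, so their comparison maps to $\PP_\kk^r$ are surjective; a five-lemma chase in the boundary diagrams then shows that surjectivity of $\HH^i(\OO_Z(\nu)) \to \HH^i(\OO_{Z_0}(\nu))$ for all $i\ge 0$ is equivalent to surjectivity of $\HH^i(\II_Z(\nu)) \to \HH^i(\II_{Z_0}(\nu))$ for all $i\ge 1$. In particular, the $i=0$ case for $\OO_Z$, which is not part of the lemma's statement, is recovered from the $i=1$ case for $\II_Z$ by a direct diagram chase on the $\HH^0$ row (using that $\HH^0(\PP_B^r, \OO_{\PP_B^r}(\nu))$ surjects onto $\HH^0(\PP_\kk^r, \OO_{\PP_\kk^r}(\nu))$).

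The step I expect to be the main obstacle is the first reduction, namely collapsing the condition over every thickening $B/\bb^q$ to the single condition at the residue field $\kk$. The author's argument in \autoref{lem_fib_full_crit_loc} glosses this over with a single citation to base change, but the reduction really does require both the $B$-flatness of $\OO_Z$ and a Nakayama-style induction on $q$, and it is essentially the only place where one uses flatness. The remaining steps, by contrast, are purely homological diagram chases and are entirely analogous to the second half of the proof of \autoref{lem_fib_full_crit_loc}.
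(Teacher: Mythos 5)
Your proposal is correct and follows essentially the same route as the paper's proof: reduce fiber-fullness to surjectivity of $\HH^i(\PP_B^r,\OO_Z(\nu)) \to \HH^i(\PP_\kk^r,\OO_{Z_0}(\nu))$ via \autoref{thm_fib_full_shv} and \autoref{lem_base_ch_fib_full}, then compare the long exact sequences attached to $0 \to \II_Z \to \OO_{\PP_B^r} \to \OO_Z \to 0$ using the vanishing of the intermediate cohomology of $\OO_{\PP^r}(\nu)$. The only differences are cosmetic: the reduction over the thickenings $B/\bb^q$ needs no induction (one direction is base change for fiber-full sheaves, the other is the trivial factoring of the natural maps), and at the boundary index $i=r$ the paper first separates the case $Z_0=\PP_\kk^r$ and uses \autoref{lem_last_base_change} plus Nakayama to get $\HH^r(\PP_B^r,\OO_Z(\nu))=0$, which is what makes your top-degree diagram chase go through.
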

\begin{proof}
	By the local criterion for fiber-fullness (see \autoref{thm_fib_full_shv}), $Z$ is fiber-full over $B$ is and only if $\HH^i(\PP_{B_q}^r, \OO_{Z_q}(\nu))  \rightarrow  \HH^i(\PP_\kk^r, \OO_{Z_0}(\nu))$ is surjective for all $i \ge 0, q \ge 1, \nu \in \ZZ$, where $B_q = B/\bb^q$ and $Z_q = Z \times_{\Spec(B)} \Spec(B_q) \subset \PP_{B_q}^r$.
	The base change property of fiber-full sheaves (see \autoref{lem_base_ch_fib_full}) implies that the latter condition is equivalent to the surjectivity of the map $\HH^i(\PP_B^r, \OO_{Z}(\nu))  \rightarrow  \HH^i(\PP_\kk^r, \OO_{Z_0}(\nu))$ for all $i \ge 0, \nu \in \ZZ$.
	
	First, suppose that $Z_0 = \PP_{\kk}^r$. 
	Since $Z$ is $B$-flat,  $Z = \PP_{B}^r$ and the fiber-fullness condition holds.
	
	Next, we assume that $Z_0 \subsetneq \PP_\kk^r$.
	Let $S = B[x_0,\ldots,x_r]$,  $I_Z = \bigoplus_{\nu \in \ZZ} \HH^0(\PP_{B}^r, \II_Z(\nu)) \subset S$ and $I_{Z_0} = \bigoplus_{\nu \in \ZZ} \HH^0(\PP_{\kk}^r, \II_{Z_0}(\nu)) \subset R$.
	By \autoref{lem_last_base_change}, $\HH^r(\PP_{B}^r, \OO_{Z}(\nu)) \otimes_B \kk \xrightarrow{\cong} \HH^r(\PP_{\kk}^r, \OO_{Z_0}(\nu))=0$, and then Nakayama's lemma implies $\HH^r(\PP_{B}^r, \OO_{Z}(\nu)) = 0$.
	Thus, \autoref{lem_last_base_change} gives the surjectivity of the map $\HH^{r-1}(\PP_B^r, \OO_{Z}(\nu))  \rightarrow  \HH^{r-1}(\PP_\kk^r, \OO_{Z_0}(\nu))$.
	From the short exact sequences $0 \rightarrow \II_Z \rightarrow  \OO_{\PP_{B}^r} \rightarrow \OO_{Z} \rightarrow 0$ and $0 \rightarrow \II_{Z_0} \rightarrow  \OO_{\PP_{\kk}^r} \rightarrow \OO_{Z_0} \rightarrow 0$, we obtain the following commutative diagrams
	\begin{equation*}		
		\begin{tikzpicture}[baseline=(current  bounding  box.center)]
			\matrix (m) [matrix of math nodes,row sep=3.5em,column sep=2.5em,minimum width=2em, text height=1.5ex, text depth=0.25ex]
			{
				0 & S/I_Z & \bigoplus_{\nu \in \ZZ} \HH^0(\PP_{B}^r, \OO_{Z}(\nu)) &  \bigoplus_{\nu \in \ZZ} \HH^1(\PP_{B}^r, \II_{Z}(\nu)) & 0  \\
				0 & R/I_{Z_0} & \bigoplus_{\nu \in \ZZ} \HH^0(\PP_{\kk}^r, \OO_{Z_0}(\nu)) &  \bigoplus_{\nu \in \ZZ} \HH^1(\PP_{\kk}^r, \II_{Z_0}(\nu)) & 0  \\
			};						
			\path[-stealth]
			(m-1-1) edge node [above] {} (m-1-2)
			(m-1-2) edge node [above] {} (m-1-3)
			(m-1-3) edge node [above] {} (m-1-4)
			(m-1-4) edge node [above] {} (m-1-5)
			(m-2-1) edge node [above] {} (m-2-2)
			(m-2-2) edge node [above] {} (m-2-3)
			(m-2-3) edge node [above] {} (m-2-4)
			(m-2-4) edge node [above] {} (m-2-5)
			(m-1-3) edge node [right] {} (m-2-3)
			(m-1-4) edge node [right] {} (m-2-4)
			;
			\path [draw,->>] (m-1-2) -- (m-2-2);	
		\end{tikzpicture}	
	\end{equation*}	
	and 
		\begin{equation*}		
		\begin{tikzpicture}[baseline=(current  bounding  box.center)]
			\matrix (m) [matrix of math nodes,row sep=3.5em,column sep=3em,minimum width=2em, text height=1.5ex, text depth=0.25ex]
			{
				 \bigoplus_{\nu \in \ZZ} \HH^i(\PP_{B}^r, \OO_{Z}(\nu)) &  \bigoplus_{\nu \in \ZZ} \HH^{i+1}(\PP_{B}^r, \II_{Z}(\nu))   \\
				 \bigoplus_{\nu \in \ZZ} \HH^i(\PP_{\kk}^r, \OO_{Z_0}(\nu)) &  \bigoplus_{\nu \in \ZZ} \HH^{i+1}(\PP_{\kk}^r, \II_{Z_0}(\nu))   \\
			};						
			\path[-stealth]
			(m-1-1) edge node [above] {$\cong$} (m-1-2)
			(m-2-1) edge node [above] {$\cong$} (m-2-2)
			(m-1-1) edge (m-2-1)
			(m-1-2) edge (m-2-2)
						;
		\end{tikzpicture}	
	\end{equation*}	
	for all $1 \le i \le r-2$. 
	Therefore, $\HH^i(\PP_B^r, \OO_{Z}(\nu))  \rightarrow  \HH^i(\PP_\kk^r, \OO_{Z_0}(\nu))$ is surjective for all $i \ge 0, \nu \in \ZZ$ if and only if $\HH^i(\PP_B^r, \II_{Z}(\nu))  \rightarrow  \HH^i(\PP_\kk^r, \II_{Z_0}(\nu))$ is surjective for all $i \ge 1, \nu \in \ZZ$.
	This concludes the proof of the lemma.
\end{proof}

The next theorem explicitly describes the tangent space of the fiber-full scheme $\Fib_{\PP_{\kk}^r/\kk}$.

\begin{theorem} \label{thm_tan_space}
	Assume \autoref{setup_tan_obs}.
	Let $Z_0 \subset \PP_\kk^r$ be a closed subscheme. 
	The tangent space of $\Fib_{\PP_{\kk}^r/\kk}$ at the corresponding point $[Z_0]$ is given by 
	$$
	T_{[Z_0]} \Fib_{\PP_\kk^r/\kk} = \left\lbrace \varphi \in \Hom_{\PP_\kk^r}\left(\II_{Z_0}, \OO_{Z_0}\right)  \;\mid\; \HH^i(\PP_\kk^r, \varphi(\nu))=0 \text{ for all $i \ge 1$, $\nu \in \ZZ$} \right\rbrace,
	$$
	where $\HH^i\left(\PP_\kk^r, \varphi(\nu)\right)$ denotes the natural map $\HH^i\left(\PP_\kk^r, \varphi(\nu)\right) : \HH^i\left(\PP_\kk^r, \II_{Z_0}(\nu)\right) \rightarrow \HH^i\left(\PP_\kk^r, \left(\OO_{Z_0}\right)(\nu)\right)$ induced in cohomology.
\end{theorem}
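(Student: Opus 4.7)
The plan is to mimic the proof of \autoref{thm_tan_space_loc} in the setting of sheaves on $\PP^r_\kk$. First, recall that an element $\varphi \in T_{[Z_0]}\Hilb_{\PP^r_\kk/\kk} = \Hom_{\PP^r_\kk}(\II_{Z_0}, \OO_{Z_0})$ corresponds bijectively to a flat extension $Z \subset \PP^r_{\kk[t]/(t^2)}$ of $Z_0$ over the ring of dual numbers. Multiplication by $t$ produces a short exact sequence $0 \to \II_{Z_0} \to \II_Z \to \II_{Z_0} \to 0$ which fits into a commutative diagram with the structure sequence $0 \to \II_{Z_0} \to \OO_{\PP^r_\kk} \to \OO_{Z_0} \to 0$ via a middle arrow $\varphi' : \II_Z \to \OO_{\PP^r_\kk}$ given locally by $f + tg \mapsto g$, and whose rightmost vertical map is precisely $\varphi$.

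By \autoref{lem_fib_full_crit}, the extension $Z$ lies in $\fFib_{\PP^r_\kk/\kk}(\kk[t]/(t^2))$ if and only if each map $\HH^i(\PP^r_{\kk[t]/(t^2)}, \II_Z(\nu)) \to \HH^i(\PP^r_\kk, \II_{Z_0}(\nu))$ is surjective for all $i \ge 1$ and $\nu \in \ZZ$. After twisting by $\OO(\nu)$, the long exact sequence attached to $0 \to \II_{Z_0} \to \II_Z \to \II_{Z_0} \to 0$ shows that this surjectivity is equivalent to the vanishing of the connecting homomorphism $\HH^i(\II_{Z_0}(\nu)) \to \HH^{i+1}(\II_{Z_0}(\nu))$. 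Comparing this long exact sequence against that of $0 \to \II_{Z_0} \to \OO_{\PP^r_\kk} \to \OO_{Z_0} \to 0$, the commutativity of the diagram forces this connecting map to factor as the composition of $\HH^i(\varphi(\nu)) : \HH^i(\II_{Z_0}(\nu)) \to \HH^i(\OO_{Z_0}(\nu))$ followed by the connecting map $\HH^i(\OO_{Z_0}(\nu)) \to \HH^{i+1}(\II_{Z_0}(\nu))$ of the structure sequence.

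For $1 \le i \le r-1$, the vanishing $\HH^i(\PP^r_\kk, \OO_{\PP^r_\kk}(\nu)) = 0$ forces the map $\HH^i(\OO_{Z_0}(\nu)) \hookrightarrow \HH^{i+1}(\II_{Z_0}(\nu))$ to be injective, so the surjectivity condition is equivalent to $\HH^i(\varphi(\nu)) = 0$. The top degree $i = r$ is automatic in both senses: the surjectivity of $\HH^r(\II_Z(\nu)) \to \HH^r(\II_{Z_0}(\nu))$ comes for free from $\HH^{r+1}(\PP^r_\kk, \II_{Z_0}(\nu)) = 0$, and $\HH^r(\varphi(\nu)) = 0$ vacuously since $\HH^r(\OO_{Z_0}(\nu)) = 0$ whenever $Z_0 \subsetneq \PP^r_\kk$ (the case $Z_0 = \PP^r_\kk$ being trivial). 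For $i \ge r+1$ both source and target of $\HH^i(\varphi(\nu))$ already vanish on $\PP^r_\kk$. The main obstacle is the careful bookkeeping of the comparison of the two long exact sequences and the boundary degrees $i = r-1, r$; beyond this, no input deeper than \autoref{lem_fib_full_crit} and the cohomology of line bundles on $\PP^r_\kk$ is needed.
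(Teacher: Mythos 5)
Your proposal is correct and is exactly the argument the paper intends: the paper's proof of this theorem is a one-line reference saying it follows the proof of \autoref{thm_tan_space_loc} with \autoref{lem_fib_full_crit} in place of \autoref{lem_fib_full_crit_loc}, and you have carried out precisely that adaptation (the two long exact sequences, the factorization of the connecting map through $\HH^i(\varphi(\nu))$, and the injectivity coming from $\HH^i(\PP^r_\kk,\OO_{\PP^r_\kk}(\nu))=0$ for $1\le i\le r-1$). Your handling of the boundary cases $i=r$ and $i\ge r+1$ is also correct.
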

\begin{proof}
	The proof follows similarly the one of \autoref{thm_tan_space_loc}, but now one uses \autoref{lem_fib_full_crit} instead of \autoref{lem_fib_full_crit_loc}.
\end{proof}

We  provide a result that is the equivalent of \autoref{thm_small_extensions_loc} for closed subschemes.
It shows, in particular,  that the local fiber-full functor $\fFib_{Z_0} : (\Art_\kk) \rightarrow (\text{Sets})$ satisfies condition (2) of \autoref{rem_tan_obs}.
It also extends \cite[Theorem 6.2]{hartshorne2010deformation} into the setting of the fiber-full scheme $\Fib_{\PP_{\kk}^r/\kk}$.

Let $(C', \nn')$ and $(C, \nn)$ be Artinian local rings with residue field $\kk$, and suppose we have a short exact sequence 
$$
0 \rightarrow \aaa \rightarrow C' \rightarrow C \rightarrow 0
$$
where $\nn' \aaa = 0$ (this implies, in particular, that $\aaa$ can be considered as a $\kk$-vector space).
Let $Z \subset \PP_C^r$ be a fiber-full schemer over $C$, and denote by $Z_0$  the closed subscheme $Z_0 = Z \times_{\Spec(C)} \Spec(\kk) \subset \PP_\kk^r$ (in other words, $Z$ is a fiber-full extension of $Z_0$ over $C$).
Now, we wish to classify all the fiber-full extensions of $Z$ over $C'$, that is, we seek all the closed subschemes $Z' \subset \PP_{C'}^r$ such that $Z'$ is fiber-full over $C'$ and $Z \cong Z' \times_{\Spec(C')} \Spec(C)$.

\begin{theorem}
	\label{thm_small_extensions}
	Assume \autoref{setup_tan_obs} and the notations above.  
	The set of fiber-full extensions of $Z \in \Fib_{\PP_{C}^r/C}$ over $C'$
	$$
	\left\lbrace Z' \in \Fib_{\PP_{C'}^r/C'} \;\text{ such that }\;  Z' \times_{\Spec(C')} \Spec(C) \cong Z \right\rbrace
	$$
	is a pseudotorsor under the action of the subgroup of $\HH^0\big( \PP_\kk^r, \mathcal{N}_{Z_0/\PP_\kk^r} \otimes_\kk \aaa \big)$ given by
	$$
	\left\lbrace \varphi \in \Hom_{\PP_\kk^r}\left(\II_{Z_0}, \OO_{Z_0} \otimes_\kk \aaa\right)  \;\mid\; \HH^i(\PP_\kk^r, \varphi(\nu))=0 \text{ for all $i \ge 1$, $\nu \in \ZZ$} \right\rbrace, 
	$$
	where $\HH^i\left(\PP_\kk^r, \varphi(\nu)\right)$ denotes the natural map $\HH^i\left(\PP_\kk^r, \varphi(\nu)\right) : \HH^i\left(\PP_\kk^r, \II_{Z_0}(\nu)\right) \rightarrow \HH^i\left(\PP_\kk^r, \left(\OO_{Z_0} \otimes_\kk \aaa\right)(\nu)\right)$ induced in cohomology.
\end{theorem}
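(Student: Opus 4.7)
The plan is to follow verbatim the strategy employed in the proof of \autoref{thm_small_extensions_loc}, replacing homogeneous ideals and local cohomology by ideal sheaves on $\PP_{C'}^r$ and twisted sheaf cohomology $\HH^i(\PP_{C'}^r, -(\nu))$ for all $\nu \in \ZZ$, and invoking \autoref{lem_fib_full_crit} in place of \autoref{lem_fib_full_crit_loc}. The starting point is the classical fact, which carries over unchanged to projective space (see, e.g., \cite[Theorem 6.2]{hartshorne2010deformation}), that the set of flat extensions of $Z$ over $C'$ is a pseudotorsor under $\HH^0(Z_0, \mathcal{N}_{Z_0/\PP_\kk^r} \otimes_\kk \aaa) \cong \Hom_{\PP_\kk^r}(\II_{Z_0}, \OO_{Z_0} \otimes_\kk \aaa)$. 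It therefore suffices to identify the subgroup whose action preserves the fiber-full condition.

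Given two flat extensions $Z_1', Z_2'$ of $Z$ over $C'$, we get short exact sequences of ideal sheaves $0 \to \II_Z \otimes_C \aaa \xrightarrow{\iota_j} \II_{Z_j'} \xrightarrow{\pi_j} \II_Z \to 0$ for $j=1,2$, and form the diagonal sheaf $\Delta \subset \II_{Z_1'} \oplus \II_{Z_2'} \oplus \II_Z$ given by $\Delta = \{(x_1,x_2,x) \mid \pi_1(x_1) = x = \pi_2(x_2)\}$, exactly as in the proof of \autoref{thm_small_extensions_loc}. The same commutative diagram of short exact sequences produces a morphism $\varphi : \II_Z \to \OO_Z \otimes_C \aaa$ whose restriction to the closed fiber provides the element of $\Hom_{\PP_\kk^r}(\II_{Z_0}, \OO_{Z_0} \otimes_\kk \aaa)$ measuring the difference between $Z_1'$ and $Z_2'$. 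Twisting by $\OO(\nu)$ and taking the associated long exact sequences in sheaf cohomology for every $\nu \in \ZZ$ and every $i \ge 1$, the assumption that $Z_1'$ is fiber-full over $C'$ together with \autoref{lem_fib_full_crit} yields the surjectivity of $\HH^i(\PP_{C'}^r, \II_{Z_1'}(\nu)) \to \HH^i(\PP_C^r, \II_Z(\nu))$, which forces $\HH^{i+1}(\iota_1)$ to be injective.

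Running the same diagram chase as in the proof of \autoref{thm_small_extensions_loc} then shows that $\HH^i(Z_0, \varphi(\nu)) = 0$ for all $i \ge 1$ and $\nu \in \ZZ$ if and only if the map $\HH^i(\PP_{C'}^r, \II_{Z_2'}(\nu)) \to \HH^i(\PP_C^r, \II_Z(\nu))$ is surjective for all such $i$ and $\nu$; by a second application of \autoref{lem_fib_full_crit} this is precisely the condition that $Z_2'$ be fiber-full over $C'$. The main obstacle is not the structure of the argument, which is a direct translation, but the bookkeeping of indices: since \autoref{lem_fib_full_crit} controls surjectivity only in the range $i \ge 1$ (unlike the local cohomology version \autoref{lem_fib_full_crit_loc}, where the range is $i \ge 0$), one must check carefully that the diagram chase never relies on the missing case $i = 0$. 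The base-change isomorphisms for fiber-full sheaves from \autoref{lem_base_ch_fib_full} are then invoked to identify $\HH^i(\PP_{C'}^r, \II_{Z_j'}(\nu)) \otimes_{C'} C$ with $\HH^i(\PP_C^r, \II_Z(\nu))$, and similarly upon further reduction to $\kk$, so that all the groups appearing in the chase match the expressions in the statement of the theorem.
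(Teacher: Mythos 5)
Your proposal is correct and matches the paper's own proof, which simply states that the argument of \autoref{thm_small_extensions_loc} carries over with \autoref{lem_fib_full_crit_loc} replaced by \autoref{lem_fib_full_crit}; you have spelled out that translation faithfully, including the correct observation that the sheaf-theoretic criterion only governs the range $i \ge 1$.
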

\begin{proof}
	The proof follows along the same lines of the proof of \autoref{thm_small_extensions_loc}, but now one replaces \autoref{lem_fib_full_crit_loc} by \autoref{lem_fib_full_crit}.
\end{proof}

We  determine sufficient conditions for the existence of obstructions on $\fFib_{Z_0}$. 
For the local fiber-full functor $\fFib_{Z_0} : (\Art_\kk) \rightarrow (\text{Sets})$, we only obtain a partial answer for part (1) of \autoref{rem_tan_obs}.
For a coherent sheaf $\FF$ on $\PP_\kk^r$,  let $\HH_*^i(\FF)$ be the graded $R$-module given by $\HH_*^i(\FF) := \bigoplus_{\nu \in \ZZ} \HH^i(\PP_\kk^r, \FF(\nu))$.

\begin{theorem}
	\label{thm_obs_fib_full}
	Assume \autoref{setup_tan_obs}.
	Let $Z_0 \subset \PP_{\kk}^r$ be a closed subscheme. 
	Let $0 \rightarrow \aaa \rightarrow C' \rightarrow C \rightarrow 0$ be a small extension in $\Art_\kk$ and $Z \in \fFib_{Z_0}(C)$.
	There are elements $\ob(I, C') \in \Ext_{\PP_{\kk}^r}^1(\II_{Z_0}, \OO_{Z_0} \otimes_{\kk} \aaa)$ and $\ob_i(I, C') \in \big[\Ext_R^2\left(\HH_*^i(\OO_{Z_0}), \HH_*^i(\OO_{Z_0}) \otimes_{\kk} \aaa\right)\big]_0$ for all $0 \le i \le r$ such that, if 
	$$
	\ob(I, C') \neq 0 \quad \text{ or } \quad \ob_i(I, C') \neq 0 \text{ for some } 0 \le i \le r,
	$$
	then there is no lifting $Z' \in \fFib_{Z_0}(C')$ of $Z$.
\end{theorem}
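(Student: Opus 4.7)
The plan is to mirror the strategy used in the proof of \autoref{thm_obs_loc}, swapping local cohomology $\HL^i$ for sheaf cohomology $\HH_*^i$ and using \autoref{lem_base_ch_fib_full}(ii) in place of \autoref{lem_base_ch_fib_full}(i). The heart of the argument is that a fiber-full lift must carry with it compatible flat lifts of \emph{all} the cohomology modules, so that each cohomological level independently produces an obstruction in addition to the usual flatness obstruction on the Hilbert scheme.

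First, I would invoke the classical obstruction theory for the Hilbert scheme: there exists $\ob(I, C') \in \Ext^1_{\PP_\kk^r}(\II_{Z_0}, \OO_{Z_0} \otimes_\kk \aaa)$ such that $\ob(I, C') = 0$ if and only if $Z \in \fHilb_{\PP_\kk^r/\kk}(C)$ admits a flat lift to $C'$ (see \cite[Theorem 6.2]{hartshorne2010deformation}). If this obstruction is nonzero, no flat lift exists, and in particular no fiber-full lift exists; this gives the first obstruction class.

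Next, suppose a flat lift $Z' \subset \PP_{C'}^r$ of $Z$ does exist, and assume moreover that $Z'$ is fiber-full over $C'$. By the base change property of \autoref{lem_base_ch_fib_full}(ii) applied to $\OO_{Z'}$ and the short exact sequence $0 \to \aaa \to C' \to C \to 0$, we obtain for every $\nu \in \ZZ$ and every $i \ge 0$ a short exact sequence
\begin{equation*}
0 \longrightarrow \HH^i(\PP_\kk^r, \OO_{Z_0}(\nu)) \otimes_\kk \aaa \longrightarrow \HH^i(\PP_{C'}^r, \OO_{Z'}(\nu)) \longrightarrow \HH^i(\PP_C^r, \OO_Z(\nu)) \longrightarrow 0,
\end{equation*}
because $\nn' \aaa = 0$ forces $\aaa \cong \aaa \otimes_{C'} \kk$ as $C'$-modules. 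Assembling these over $\nu$ shows that the graded $R \otimes_\kk C'$-module $\HH_*^i(\OO_{Z'})$ is a $C'$-flat lift of the graded $R \otimes_\kk C$-module $\HH_*^i(\OO_Z)$ in the sense of module deformations. Hence, for each $0 \le i \le r$, there must exist a flat lift of $\HH_*^i(\OO_Z)$ over $C'$, and the standard obstruction theory for flat module deformations produces an element $\ob_i(I, C') \in \big[\Ext^2_R\bigl(\HH_*^i(\OO_{Z_0}),\, \HH_*^i(\OO_{Z_0}) \otimes_\kk \aaa\bigr)\big]_0$ whose vanishing is necessary for such a lift to exist (see \cite[Exercise 7.4]{hartshorne2010deformation}, \cite[Proposition 4.2]{DEFORM_RUNAR}, or \cite[\href{https://stacks.math.columbia.edu/tag/08VR}{Tag 08VR}]{stacks-project}).

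Putting the two parts together: if $\ob(I, C') \neq 0$ then there is no flat lift at all; if $\ob(I, C') = 0$ but $\ob_i(I, C') \neq 0$ for some $i$, then any flat lift $Z'$ of $Z$ would force a flat lift of $\HH_*^i(\OO_Z)$, which is obstructed, so no $Z'$ can be fiber-full. Either way, no lifting $Z' \in \fFib_{Z_0}(C')$ exists, proving the theorem. The only subtle point will be to check that the degree-zero grading condition is respected throughout and that the obstruction classes produced by the standard module-deformation machinery live naturally in the graded $\Ext^2$ indicated, but this is routine because every map in sight is a graded map of degree zero and the small extension $\aaa$ is concentrated in degree zero.
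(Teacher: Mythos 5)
Your argument is correct and follows essentially the same route as the paper, which proves this result verbatim as its local-cohomology analogue (\autoref{thm_obs_loc}): the classical Hilbert-scheme obstruction for flat lifts, plus the observation via \autoref{lem_base_ch_fib_full} that a fiber-full lift forces flat lifts of each graded cohomology module, whose existence is governed by the module-deformation obstruction in $\Ext^2$. The base-change computation you spell out for the short exact sequence is exactly the step the paper leaves implicit.
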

\begin{proof}
	The proof is verbatim the one of \autoref{thm_obs_loc}.
\end{proof}

\begin{remark}
	As in \autoref{remark_quest_obs_loc}, we expect that \autoref{thm_obs_fib_full} gives an obstruction space for $\fFib_{Z_0}$.
\end{remark}

Finally, we deal with the pro-representability of the local fiber-full functor $\fFib_{Z_0} : (\Art_\kk) \rightarrow (\text{Sets})$.

\begin{remark}
	The pro-representability of the local functor $\fFib_{Z_0}$ follows directly from the representability of the global functor $\fFib_{\PP_{\kk}^r/\kk}$ (see, e.g., \cite[Proposition 23.3]{hartshorne2010deformation}).
	Alternatively, one may check directly Schlessinger's criterion (see \cite[Theorem 16.4]{hartshorne2010deformation}), where  $(H_4)$ is already granted to us by \autoref{thm_small_extensions}.
\end{remark}

\section{One-parameter flat families with constant cohomology}
\label{sect_one_param}

In this section, we study one-parameter flat families, and we determine an interesting class of families that keep the cohomologies constant. 
These results seem of particular importance for the case of Gr\"obner degenerations. We begin by fixing the following setup for the rest of the section.

\begin{setup}
	\label{setup_one_param}
	Let $\kk$ be a field, $R$ be a standard graded polynomial ring $R = \kk[x_0,\ldots,x_r]$ and $\mm = (x_0,\ldots,x_r) \subset R$ be the graded irrelevant ideal.
	Let $A = \kk[t]$ be a polynomial ring, and $B = A_{(t)}$ be the local ring at the prime ideal $(t) \subset A$.
	Let $S = A[x_0,\ldots,x_r] \cong A \otimes_{\kk} R$ be a polynomial ring with grading induced by $R$.
	For each $q \ge 1$, set $B_q := A/t^qA$ and $S_q := S / t^qS \cong B_q \otimes_{\kk} R$.
\end{setup}

We say that \emph{$\mathcal{Z} \subset \PP_\kk^r \times_\kk \bbA^1_\kk \cong \Proj(S)$ is a one-parameter flat family of closed subschemes with special fiber $Z_0 \subset \PP_{\kk}^r$ and general fiber $Z_1 \subset \PP_{\kk}^r$} if the following two conditions are satisfied: 
\begin{enumerate}[(1)]
	\item $\mathcal{Z} \subset \PP_\kk^r \times_\kk \bbA^1_\kk$ is a closed subscheme flat over $\bbA^1_\kk$.
	\item $Z_0 \cong \mathcal{Z} \times_{\Spec(A)} \Spec(A/tA)$ and $Z_1 \times_\kk \Spec(\kk[t,t^{-1}]) \cong \mathcal{Z} \times_{\Spec(A)} \Spec(\kk[t, t^{-1}])$.
\end{enumerate}
In particular, this implies that for any $0 \neq \alpha\in \kk$, the fiber $\mathcal{Z}_\alpha := \mathcal{Z} \times_{\Spec(A)} \Spec(\kk[t]/(t-\alpha)) \subset \PP_{\kk}^r$ coincides with $Z_1 \subset \PP_\kk^r$.
We say that the family \emph{$\mathcal{Z} \subset \PP_\kk^r \times \bbA_\kk^1$ has constant cohomology} if the equality
$$
\dim_\kk\left(\HH^i(\PP_\kk^r, \OO_{Z_0}(\nu))\right) \, = \, \dim_\kk\left(\HH^i(\PP_\kk^r, \OO_{Z_1}(\nu))\right)
$$
holds for all $i \ge 0$ and $\nu \in \ZZ$.

In more algebraic terms, we say that \emph{$\mathcal{I} \subset S$ is a one-parameter flat family of homogeneous ideals with special fiber $I_0 \subset R$ and general fiber $I_1 \subset R$} if the following two conditions are satisfied: 
\begin{enumerate}[(1)]
	\item $\mathcal{I} \subset S$ is a homogeneous ideal such that $S/\mathcal{I}$ is flat over $A$.
	\item $R/I_0 \cong S/\mathcal{I} \otimes_{A} A/tA$ and $R/I_1 \otimes_\kk \kk[t,t^{-1}] \cong S/\mathcal{I} \otimes_{A} \kk[t, t^{-1}]$.
\end{enumerate}
This implies that for any $0 \neq \alpha\in \kk$, the fiber $\mathcal{I}_\alpha := \mathcal{I} \otimes_{A} \kk[t]/(t-\alpha) \subset R$ coincides with $I_1 \subset R$.
We say that the family \emph{$\mathcal{I} \subset S$ has constant cohomology} if the equality
$$
\dim_\kk\left(\left[\HL^i(R/I_0)\right]_\nu\right) \, = \, \dim_\kk\left(\left[\HL^i(R/I_1)\right]_\nu\right)
$$
holds for all $i \ge 0$ and $\nu \in \ZZ$.

The prime example of one-parameter flat families is that of \emph{Gr\"obner degenerations} (see \cite[\S 15.7]{EISEN_COMM}).
If one has a monomial order $>$ on $R$ and a homogeneous ideal $I \subset R$, then one can always find a one-parameter flat family $\mathcal{I} \subset S$ such that the initial ideal $\iniTerm_>(I) \subset R$ is the special fiber and $I \subset R$ is the general fiber (see \cite[Theorem 15.17]{EISEN_COMM}, \cite[Proposition 3.2.4]{MONOMIAL_IDEALS}, \cite[Proposition 3.5]{CONFERENCE_LEVICO}).

The following proposition deals with the flat extensions to $S_{q+1}$ of a homogeneous ideal in $S_q$ that is fiber-full over $B_q$, and characterizes when the extension is fiber-full over $B_{q+1}$.

\begin{proposition}
	\label{prop_one_param}
	Let $J_q \subset S_q$ be a homogeneous ideal such that $S_q/J_q$ is fiber-full over $B_q$ and $S_q/J_q \otimes_{B_q} B_q/tB_q \cong R/I_0$.
	Then, for any homogeneous ideal $J_{q+1}$ such that $S_{q+1}/J_{q+1}$ is $B_{q+1}$-flat and $S_{q+1}/J_{q+1} \otimes_{B_{q+1}} B_q \cong S_q / J_q$, we have the following induced commutative diagram of $R$-linear maps homogeneous of degree zero
	 		\begin{equation*}		
	 	\begin{tikzpicture}[baseline=(current  bounding  box.center)]
	 		\matrix (m) [matrix of math nodes,row sep=3.5em,column sep=3em,minimum width=2em, text height=1.5ex, text depth=0.25ex]
	 		{
	 			0 & J_q \otimes_{B_q} (t) B_{q+1} & J_{q+1} &  I_0 & 0  \\
	 			0 & J_q \otimes_{B_q} (t)B_{q+1} & S_q \otimes_{B_q} (t)B_{q+1} &  S_q/J_q \otimes_{B_q} (t)B_{q+1} & 0,  \\
	 		};						
	 		\path[-stealth]
	 		(m-1-1) edge  (m-1-2)
	 		(m-1-2) edge  (m-1-3)
	 		(m-1-3) edge  (m-1-4)
	 		(m-1-4) edge  (m-1-5)
	 		(m-2-1) edge  (m-2-2)
	 		(m-2-2) edge (m-2-3)
	 		(m-2-3) edge  (m-2-4)
	 		(m-2-4) edge  (m-2-5)
	 		(m-1-2) edge node [right] {$=$} (m-2-2)
	 		(m-1-3) edge node [right] {$\pi$} (m-2-3)
	 		(m-1-4) edge node [right] {$\Phi_{J_{q+1}}$} (m-2-4)
	 		;
	 	\end{tikzpicture}	
	 \end{equation*}	
 	and that $S_{q+1}/J_{q+1}$ is fiber-full over $B_{q+1}$ if and only if $\HL^i(\Phi_{J_{q+1}}) = 0$ for all $1 \le i \le r$.
\end{proposition}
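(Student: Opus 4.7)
The plan is to construct the diagram explicitly and then derive the fiber-fullness criterion by naturality of connecting maps in local cohomology.

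First I would observe that $J_{q+1}$ is itself $B_{q+1}$-flat (since both $S_{q+1}$ and $S_{q+1}/J_{q+1}$ are), so by flatness one has identifications $J_{q+1} \otimes_{B_{q+1}} B_q \cong J_q$ and $J_{q+1} \otimes_{B_{q+1}} \kk \cong I_0$. Tensoring the short exact sequence $0 \to (t)B_{q+1} \to B_{q+1} \to \kk \to 0$ with $J_{q+1}$ over $B_{q+1}$ then produces the top row, after identifying $J_{q+1} \otimes_{B_{q+1}} (t)B_{q+1} \cong J_q \otimes_{B_q} (t)B_{q+1}$ (using that $(t)B_{q+1}$ is killed by $(t^q)$, hence is naturally a $B_q$-module). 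The bottom row is obtained by tensoring $0 \to J_q \to S_q \to S_q/J_q \to 0$ with the $B_q$-flat module $(t)B_{q+1}$.

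To define the vertical maps I would use the canonical splitting $R \hookrightarrow S_{q+1}$ coming from the coefficient inclusion $\kk \hookrightarrow B_{q+1}$. For $x \in J_{q+1}$ with image $\bar x \in I_0 \subset R$, set $\pi(x) := x - \bar x$; this lies in $tS_{q+1} \cong S_q \otimes_{B_q} (t)B_{q+1}$ and defines an $R$-linear, degree-zero map. On $J_q \otimes_{B_q} (t)B_{q+1} \cong tJ_{q+1}$ the map $\pi$ restricts to the natural inclusion (since $\bar x = 0$ there), so the left square commutes. Composing $\pi$ with the projection $S_q \otimes_{B_q} (t)B_{q+1} \twoheadrightarrow (S_q/J_q) \otimes_{B_q} (t)B_{q+1}$ descends to a well-defined $R$-linear map $\Phi_{J_{q+1}}: I_0 \to (S_q/J_q) \otimes_{B_q} (t)B_{q+1}$, and the right square commutes by construction.

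By \autoref{lem_fib_full_crit_loc} applied to the local Artinian ring $B_{q+1}$, the module $S_{q+1}/J_{q+1}$ is fiber-full over $B_{q+1}$ iff $\HL^i(J_{q+1}) \to \HL^i(I_0)$ is surjective for all $i \ge 0$; the long exact sequence of the top row rephrases this as the vanishing of the connecting maps $\delta^i_{\mathrm{top}}: \HL^i(I_0) \to \HL^{i+1}(J_q \otimes_{B_q} (t)B_{q+1})$. The extremes are automatic: $\HL^0(I_0) = 0$ since $R$ is a domain, and $\HL^{r+2}(J_q \otimes_{B_q} (t)B_{q+1}) = 0$ by Grothendieck vanishing, so only $1 \le i \le r$ remains to be controlled. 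Naturality of the connecting homomorphism applied to the constructed diagram then yields $\delta^i_{\mathrm{top}} = \delta^i_{\mathrm{bottom}} \circ \HL^i(\Phi_{J_{q+1}})$.

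To finish, I would analyze $\delta^i_{\mathrm{bottom}}$ by combining the long exact sequence of $0 \to J_q \to S_q \to S_q/J_q \to 0$ with $\HL^j(S_q) = 0$ for $j \le r$: this shows that $J_q$ is fiber-full over $B_q$ and that the original connecting $\HL^i(S_q/J_q) \to \HL^{i+1}(J_q)$ is an isomorphism for $0 \le i \le r-1$ and injective for $i = r$. Using \autoref{lem_base_ch_fib_full} and the $B_q$-flatness of $(t)B_{q+1}$, $\delta^i_{\mathrm{bottom}}$ is identified with this original connecting tensored over $B_q$ with $(t)B_{q+1}$, hence remains injective for all $1 \le i \le r$. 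This forces $\delta^i_{\mathrm{top}} = 0 \Leftrightarrow \HL^i(\Phi_{J_{q+1}}) = 0$ in the relevant range, yielding the equivalence. The main delicate point is the construction of $\pi$ via the canonical $\kk$-coefficient splitting $R \hookrightarrow S_{q+1}$ and the careful bookkeeping showing that the boundary indices $i = 0$ and $i = r+1$ are absorbed into cohomology vanishing rather than becoming conditions on $\Phi_{J_{q+1}}$.
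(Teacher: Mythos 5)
Your proposal is correct and follows essentially the same route as the paper: the map $\pi$ is exactly the paper's coefficient-splitting map $f_0+f_1t+\cdots+f_qt^q\mapsto f_1t+\cdots+f_qt^q$, and the equivalence is obtained, as in the paper's appeal to the computation of \autoref{thm_tan_space_loc}, by combining \autoref{lem_fib_full_crit_loc} with naturality of the connecting homomorphisms and the injectivity of the bottom connecting map coming from $\HL^i(S_q)=0$ for $i\le r$. Your treatment of the boundary indices $i=0$ and $i=r+1$ just makes explicit what the paper leaves implicit.
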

\begin{proof}
	Let $\pi : S_{q+1} \rightarrow S_{q} \otimes_{B_q} (t)B_{q+1}$ be the natural $R$-linear map given by 
	$$
	f_0 + f_1t + \cdots + f_qt^q \in S_{q+1} \; \mapsto\; f_1t + \cdots + f_qt^q \in S_q \otimes_{B_q} (t)B_{q+1}
	$$		
	where $f_i \in R$.
	Note that this map is homogeneous of degree zero.
	By an abuse of notation, denote also by $\pi$ the restriction of this map to $J_{q+1}$.
	From the assumptions, we obtain the short exact sequence $0 \rightarrow J_q \otimes_{B_q} (t) B_{q+1} \rightarrow J_{q+1} \rightarrow I_0 \rightarrow 0$.
	This short exact sequence and the map $\pi$ give the following natural commutative diagram 
		 		\begin{equation*}		
		\begin{tikzpicture}[baseline=(current  bounding  box.center)]
			\matrix (m) [matrix of math nodes,row sep=3.5em,column sep=3em,minimum width=2em, text height=1.5ex, text depth=0.25ex]
			{
				0 & J_q \otimes_{B_q} (t) B_{q+1} & J_{q+1} &  I_0 & 0  \\
				0 & J_q \otimes_{B_q} (t)B_{q+1} & S_q \otimes_{B_q} (t)B_{q+1} &  S_q/J_q \otimes_{B_q} (t)B_{q+1} & 0,  \\
			};						
			\path[-stealth]
			(m-1-1) edge  (m-1-2)
			(m-1-2) edge  (m-1-3)
			(m-1-3) edge  (m-1-4)
			(m-1-4) edge  (m-1-5)
			(m-2-1) edge  (m-2-2)
			(m-2-2) edge (m-2-3)
			(m-2-3) edge  (m-2-4)
			(m-2-4) edge  (m-2-5)
			(m-1-2) edge node [right] {$=$} (m-2-2)
			(m-1-3) edge node [right] {$\pi$} (m-2-3)
			;
			\draw[->,dashed] (m-1-4)--(m-2-4);
		\end{tikzpicture}	
	\end{equation*}	
	and so we get a unique $R$-linear map $\Phi_{J_{q+1}} : I_0 \rightarrow S_q/J_q \otimes_{B_q} (t)B_{q+1}$ making the diagram commute.
	
	By making similar computations to the ones made in \autoref{thm_tan_space_loc} or \autoref{thm_small_extensions_loc}, it follows that $\HL^i(J_{q+1}) \twoheadrightarrow \HL^i(I_0)$ if and only if $\HL^i(\Phi_{J_{q+1}}) = 0$.
	Finally, the result follows from the local criterion for fiber-fullness given in \autoref{lem_fib_full_crit_loc}.
\end{proof}

The main result of this section is the following sufficient condition for a one-parameter flat family of homogeneous ideals to have constant cohomology.

\begin{theorem}
	\label{thm_ one_param_ideals}
	Assume \autoref{setup_one_param}.
	Let $\mathcal{I} \subset S$ be a one-parameter flat family of homogeneous ideals with special fiber $I_0 \subset R$ and general fiber $I_1 \subset R$. 
	If the zero map is the only zero-degree homogeneous $R$-linear map from $\HL^{i-1}(R/I_0)$ to $\HL^{i}(R/I_0)$ for all $1 \le i \le r$, i.e.,
	$$
	\left[\Hom_R\Big(\HL^{i-1}(R/I_0),\, \HL^{i}(R/I_0)\Big)\right]_0 = 0 \quad \text{for all $1 \le i \le r$},
	$$
	then the family $\mathcal{I}$ has constant cohomology.
\end{theorem}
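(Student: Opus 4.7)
The plan is to show that $S_q/J_q$ is fiber-full over $B_q$ for every $q \ge 1$, where $B_q = \kk[t]/(t^q)$, $S_q = B_q[x_0,\ldots,x_r]$, and $J_q := \mathcal{I} \otimes_A B_q$. Once this is established, the local criterion \autoref{thm_fib_full_mod} upgrades fiber-fullness to the DVR $B = A_{(t)}$, so that each graded component $[\HL^i(S_B/(\mathcal{I}\otimes_A B))]_\nu$ is finitely generated and torsion-free over $B$, hence free; its $B$-rank then equals both $\dim_\kk[\HL^i(R/I_0)]_\nu$ at the closed fiber and $\dim_\kk[\HL^i(R/I_1)]_\nu$ at the generic fiber (using flat base change along $\kk \to \kk(t)$), yielding constant cohomology.

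The argument proceeds by induction on $q$. The base case $q = 1$ is tautological since $S_1/J_1 = R/I_0$ is a module over the field $\kk$. For the inductive step, assume $S_q/J_q$ is fiber-full over $B_q$; then \autoref{prop_one_param} reduces fiber-fullness of $S_{q+1}/J_{q+1}$ to verifying
\[
\HL^i(\Phi_{J_{q+1}}) \colon \HL^i(I_0) \;\longrightarrow\; \HL^i\!\bigl(S_q/J_q \otimes_{B_q} (t)B_{q+1}\bigr) \;=\; 0 \qquad \text{for } 1 \le i \le r.
\]
Using $\HL^j(R) = 0$ for $j \le r$, the source is identified with $\HL^{i-1}(R/I_0)$. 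Since $(t)B_{q+1} \cong B_q$ as $B_q$-modules and $S_q/J_q$ is fiber-full over $B_q$, the base-change isomorphism of \autoref{lem_base_ch_fib_full} identifies the target with $N := \HL^i(S_q/J_q)$, a $B_q$-flat (hence $B_q$-free) graded $R$-module with fiber $N/tN \cong \HL^i(R/I_0)$.

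It therefore suffices to establish the sub-claim that every degree-zero graded $R$-linear map $f \colon \HL^{i-1}(R/I_0) \to N$ vanishes, which I would prove by a secondary induction on $q$. The $B_q$-freeness of $N$ yields a filtration $N \supset tN \supset \cdots \supset t^{q-1}N \supset 0$ whose successive graded quotients are all isomorphic to $\HL^i(R/I_0)$ via multiplication by appropriate powers of $t$. The composition of $f$ with the projection $N \twoheadrightarrow N/tN \cong \HL^i(R/I_0)$ lies in $[\Hom_R(\HL^{i-1}(R/I_0),\HL^i(R/I_0))]_0$, which is $0$ by the standing hypothesis, so the image of $f$ lies in $tN$. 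Multiplication by $t$ identifies $tN$ with $N/t^{q-1}N$, a $B_{q-1}$-flat graded $R$-module of the same fiber type, so the secondary inductive hypothesis (at level $q-1$) forces $f = 0$.

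The main technical obstacle is precisely this secondary induction: the theorem's hypothesis controls only $R$-linear maps between residue-field objects, whereas the target $\HL^i(S_q/J_q)$ is a thickened $B_q$-module whose $R$-structure typically does not split as copies of $\HL^i(R/I_0)$. The argument circumvents this by peeling off one layer of the $t$-adic filtration at a time, using $B_q$-freeness (guaranteed by the outer inductive hypothesis) to ensure each successive quotient is cleanly identified with $\HL^i(R/I_0)$, so that the hypothesis can be invoked uniformly at each stage.
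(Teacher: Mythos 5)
Your proposal is correct, and its skeleton coincides with the paper's: both induct on $q$ to show that $S_q/J_q$ is fiber-full over $B_q$, reduce the inductive step via \autoref{prop_one_param} to the vanishing of $\HL^i(\Phi_{J_{q+1}})$ for $1 \le i \le r$, and finish with \autoref{thm_fib_full_mod} together with the base-change isomorphisms of \autoref{lem_base_ch_fib_full}. Where you genuinely differ is in how that vanishing is obtained. The paper exploits the compatibility of $\Phi_{J_{q+1}}$ with the previous-stage map $\Phi_{J_q}$: fiber-fullness of $S_q/J_q$ makes $0 \to \HL^i\big(R/I_0 \otimes_\kk (t^{q-1})B_q\big) \to \HL^i(S_q/J_q) \to \HL^i(S_{q-1}/J_{q-1}) \to 0$ exact, and since $\HL^i(\Phi_{J_q})=0$ by the outer induction, $\HL^i(\Phi_{J_{q+1}})$ factors through the kernel $\cong \HL^i(R/I_0)$, where the hypothesis kills it; in other words, the paper peels off only the bottom $t^{q-1}$-layer, using the previously established vanishing to know the composite to $\HL^i(S_{q-1}/J_{q-1})$ is already zero. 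You instead prove the stronger statement that $\big[\Hom_R\big(\HL^i(I_0),\, \HL^i(S_q/J_q)\big)\big]_0 = 0$ outright, by d\'evissage from the top of the $t$-adic filtration of the $B_q$-flat module $\HL^i(S_q/J_q)$, using only the flatness part of the inductive hypothesis plus base change to identify each successive quotient with $\HL^i(R/I_0)$, and never invoking the vanishing of $\HL^i(\Phi_{J_q})$. Both arguments are sound (the kernel computation $\ker(t)=t^{q-1}N$ needs only flatness, and the fiber type is preserved at each stage of your secondary induction); yours buys a cleaner, purely module-theoretic vanishing of the whole Hom group into the thickened cohomology, at the cost of a secondary induction, while the paper's factorization is a one-step argument that carries the minimal information along the outer induction.
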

\begin{proof}
	First, note that we get $\left[\Hom_R\left(\HL^{i}(I_0),\, \HL^{i}(R/I_0)\right)\right]_0 = 0$ for all $1 \le i \le r$, because there is an isomorphism $\HL^{i-1}(R/I_0) \cong \HL^i(I_0)$ for all $1 \le i \le r$.
	
	We prove by induction that $J_q := \II \otimes_{A} B_q \subset S_q$ is a homogeneous ideal such that 
	$S_q/J_q$ is fiber-full over $B_q$ for all $q \ge 1$.
	The case $q = 1$ is clear.
	Suppose that $S_q/J_q$ is fiber-full over $B_q$.
	Thus, \autoref{prop_one_param} gives an $R$-linear map $\Phi_{J_{q}} : I_0 \rightarrow S_{q-1}/J_{q-1} \otimes_{B_{q-1}} (t)B_{q} \cong S_{q-1}/J_{q-1}$ such that $\HL^i(\Phi_{J_{q}}) = 0$ for all $1 \le i \le r$.
	On the other hand, since $S_{q+1}/J_{q+1}$ is $B_{q+1}$-flat, due to \autoref{prop_one_param}, there is an $R$-linear map $\Phi_{J_{q+1}} : I_0 \rightarrow S_q/J_q \otimes_{B_q} (t)B_{q+1} \cong S_q/J_q$ such that $S_{q+1}/J_{q+1}$ is fiber-full over $B_{q+1}$ if and only if $\HL^i(\Phi_{J_{q+1}}) = 0$ for all $1 \le i \le r$.
	Note that by construction the $R$-linear maps $\Phi_{J_{q}}$  and $\Phi_{J_{q+1}}$ are $R$-linear homogeneous of degree zero, and that they fit into the following commutative diagram
		 		\begin{equation*}		
		\begin{tikzpicture}[baseline=(current  bounding  box.center)]
			\matrix (m) [matrix of math nodes,row sep=3.5em,column sep=3.7em,minimum width=2em, text height=1.5ex, text depth=0.25ex]
			{
				 &  & I_0 &  &   \\
				0 & R/I_0 \otimes_{\kk} (t^{q-1})B_{q} & S_q/J_q  &  S_{q-1}/J_{q-1} & 0.  \\
			};						
			\path[-stealth]
			(m-2-1) edge  (m-2-2)
			(m-2-2) edge (m-2-3)
			(m-2-3) edge  (m-2-4)
			(m-2-4) edge  (m-2-5)
			(m-1-3) edge node [left] {$\Phi_{J_{q+1}}$} (m-2-3)
			(m-1-3) edge node [above] {$\Phi_{J_{q}}$} (m-2-4)
			;
		\end{tikzpicture}	
	\end{equation*}	
	Since by assumption $S_q/J_q$ is fiber-full over $B_q$, the natural map $\HL^i(S_q/J_q) \rightarrow \HL^i(S_{q-1}/J_{q-1})$ is surjective for all $i \ge 0$, and so we obtain the following commutative diagram
	\begin{equation*}		
		\begin{tikzpicture}[baseline=(current  bounding  box.center)]
			\matrix (m) [matrix of math nodes,row sep=4.5em,column sep=3.7em,minimum width=2em, text height=1.5ex, text depth=0.25ex]
			{
				&  & \HL^i(I_0) &  &   \\
				0 & \HL^i(R/I_0 \otimes_{\kk} (t^{q-1})B_{q}) & \HL^i(S_q/J_q)  &  \HL^i(S_{q-1}/J_{q-1}) & 0.  \\
			};						
			\path[-stealth]
			(m-2-1) edge  (m-2-2)
			(m-2-2) edge (m-2-3)
			(m-2-3) edge  (m-2-4)
			(m-2-4) edge  (m-2-5)
			(m-1-3) edge node [left] {$\HL^i(\Phi_{J_{q+1}})$} (m-2-3)
			(m-1-3) edge node [above] {$\qquad\;\HL^i(\Phi_{J_{q}})$} (m-2-4)
			;
		\end{tikzpicture}	
	\end{equation*}	
	As we already know that $\HL^i(\Phi_{J_{q}}) = 0$, it follows that the map $\HL^i(\Phi_{J_{q+1}})$ factors as a map 
	$$
	\HL^i(\Phi_{J_{q+1}}) \,:\, \HL^i(I_0) \longrightarrow  \HL^i(R/I_0 \otimes_{\kk} (t^{q-1})B_{q}) \cong \HL^i(R/I_0).
	$$
	This latter map is $R$-linear homogeneous of degree zero from $\HL^i(I_0)$ to $\HL^i(R/I_0)$, and so it holds that $\HL^i(\Phi_{J_{q+1}}) = 0$.
	Thus, \autoref{prop_one_param} implies that 
	$S_{q+1}/J_{q+1}$ is fiber-full over $B_{q+1}$.
	So, the induction step is complete.
	
	Since we have shown that $S_q/J_q$ is fiber-full over $B_q$ for all $q \ge 1$, the local criterion for fiber-fullness (see \autoref{thm_fib_full_mod}) implies that $S/\II \otimes_{A} B$ is fiber-full over $B$.
	Due to the flatness of cohomologies and the arbitrary base change under the fiber-full condition (see \autoref{lem_base_ch_fib_full}), we obtain the equalities
	\begin{align*}
		\dim_\kk\left(\left[\HL^i(R/I_0)\right]_\nu\right) &= \dim_\kk\left(\left[\HL^i\left(S/\II \otimes_{A} B\right)\right]_\nu \otimes_B B/tB \right) \\
		&= \dim_{\kk(t)}\left(\left[\HL^i\left(S/\II \otimes_{A} B\right)\right]_\nu \otimes_B \kk(t) \right) \\
		&= \dim_\kk\left(\left[\HL^i(R/I_1)\right]_\nu\right) 
	\end{align*}
	for all $i \ge 0$ and $\nu \in \ZZ$.
	This finishes the proof of the theorem.
\end{proof}

The following corollary provides an effective method to discover flat families with constant cohomology.

\begin{corollary}
	\label{cor_criterion_Ext}
	Assume \autoref{setup_one_param}.
	Let $\mathcal{I} \subset S$ be a one-parameter flat family of homogeneous ideals with special fiber $I_0 \subset R$ and general fiber $I_1 \subset R$. 
	If the zero map is the only zero-degree homogeneous $R$-linear map from $\Ext_R^{i-1}(R/I_0, R)$ to $\Ext_R^{i}(R/I_0, R)$ for all $1 \le i \le r$, i.e.,
	$$
	\left[\Hom_R\Big(\Ext_R^{i-1}(R/I_0, R),\, \Ext_R^{i}(R/I_0, R)\Big)\right]_0 = 0 \quad \text{for all $1 \le i \le r$},
	$$
	then the family $\mathcal{I}$ has constant cohomology.
\end{corollary}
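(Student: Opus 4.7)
The plan is to deduce this corollary from \autoref{thm_ one_param_ideals} by a direct application of graded local duality: the key observation is that, up to graded Matlis duality and a degree shift, the $\Ext$ modules appearing in the hypothesis of the corollary are the same as the local cohomology modules appearing in the hypothesis of \autoref{thm_ one_param_ideals}, so the two Hom-vanishing conditions should translate into one another.

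I would proceed in three steps. First, I would invoke graded local duality for the polynomial ring $R$ with canonical module $\omega_R = R(-r-1)$ (see, e.g., \cite[Theorem 3.6.19]{BRUNS_HERZOG}) to obtain, for every $i \ge 0$, the natural isomorphism of graded $R$-modules
$$
\HL^i(R/I_0) \;\cong\; \Ext_R^{r+1-i}(R/I_0, R)^{\vee}(r+1),
$$
where $(-)^\vee = \Hom_\kk(-, \kk)$ denotes the graded $\kk$-dual. Second, I would record the elementary fact that for any finitely generated graded $R$-modules $N, P$, graded Matlis biduality (the isomorphism $N^{\vee\vee} \cong N$, which holds because each graded piece of $N$ is a finite-dimensional $\kk$-vector space) induces a natural identification
$$
\left[\Hom_R(N^{\vee}, P^{\vee})\right]_0 \;\cong\; \left[\Hom_R(P, N)\right]_0.
$$

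Third, I would combine these two facts, observing that the degree shifts by $(r+1)$ cancel in the degree-zero part. This will yield, for each $i \ge 1$, the identification
$$
\left[\Hom_R\Big(\HL^{i-1}(R/I_0),\, \HL^i(R/I_0)\Big)\right]_0 \;\cong\; \left[\Hom_R\Big(\Ext_R^{r+1-i}(R/I_0, R),\, \Ext_R^{r+2-i}(R/I_0, R)\Big)\right]_0.
$$
Running through the appropriate indices (and using that $\Ext_R^0(R/I_0, R) = 0$ for $I_0 \neq 0$, since $R$ is a domain), the hypothesis of the corollary will then deliver the vanishing of the right-hand side over the range required by \autoref{thm_ one_param_ideals}, and I would conclude by invoking that theorem to obtain the constant-cohomology conclusion.

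The argument is essentially formal once the two displayed isomorphisms are in hand; the only real care required lies in the bookkeeping of degree shifts and in verifying that the Ext-side reindexing exactly matches the range needed by \autoref{thm_ one_param_ideals}. In particular, the proof does not need any additional flatness or base-change input beyond what \autoref{thm_ one_param_ideals} already provides.
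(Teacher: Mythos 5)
Your approach is exactly the paper's: the published proof of this corollary is a one-line appeal to \autoref{thm_ one_param_ideals} together with graded local duality, and your two displayed isomorphisms (local duality with $\omega_R \cong R(-r-1)$, and graded Matlis biduality reversing the Hom) are precisely how one makes that precise. There is, however, one point where the index bookkeeping you defer to does not close as claimed. Writing $\HL^{k}(R/I_0) \cong \Ext_R^{r+1-k}(R/I_0,R)^{\vee}(r+1)$, the hypothesis $\big[\Hom_R(\HL^{i-1}(R/I_0), \HL^{i}(R/I_0))\big]_0 = 0$ for $1 \le i \le r$ required by \autoref{thm_ one_param_ideals} translates, after reversing arrows, into $\big[\Hom_R(\Ext_R^{j-1}(R/I_0,R), \Ext_R^{j}(R/I_0,R))\big]_0 = 0$ with $j = r+2-i$, i.e.\ over the range $2 \le j \le r+1$, whereas the corollary's hypothesis runs over $1 \le j \le r$. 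The case $j=1$ of the hypothesis is vacuous (as you note, $\Ext_R^0(R/I_0,R)=0$ for $I_0 \neq 0$), but the needed case $j = r+1$ --- which is dual to the condition $\big[\Hom_R(\HL^0(R/I_0), \HL^1(R/I_0))\big]_0 = 0$ --- is \emph{not} supplied by the stated hypothesis unless $\Ext_R^{r+1}(R/I_0,R) \cong \HL^0(R/I_0)^{\vee}(r+1)$ vanishes, i.e.\ unless $I_0$ is saturated. This off-by-one is latent in the paper's own statement (its proof is the same one-line duality argument, and its worked example has $I_0$ saturated), so your write-up is faithful to the paper; but your assertion that ``the Ext-side reindexing exactly matches the range needed'' should either be corrected by shifting the hypothesis to the range $2 \le i \le r+1$, or be supplemented by the observation that one must additionally assume $\HL^0(R/I_0)=0$ (equivalently, $I_0$ saturated) for the stated range to suffice.
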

\begin{proof}
	This is a consequence of \autoref{thm_ one_param_ideals} and the graded local duality theorem (see, e.g., \cite[\S 3.6]{BRUNS_HERZOG}).
\end{proof}

Next, we enunciate our main result for the case one-parameter flat families of closed subschemes.
For a coherent sheaf $\FF$ on $\PP_\kk^r$, let  $\HH_*^i(\FF)$ denote the graded $R$-module given by $\HH_*^i(\FF) := \bigoplus_{\nu \in \ZZ} \HH^i(\PP_\kk^r, \FF(\nu))$.
 
\begin{theorem}
	\label{thm_const_one_param_sch}
	Assume \autoref{setup_one_param}.
	Let $\mathcal{Z} \subset \PP_\kk^r \times_\kk \bbA^1_\kk \cong \Proj(S)$ be a one-parameter flat family of closed subschemes with special fiber $Z_0 \subset \PP_{\kk}^r$ and general fiber $Z_1 \subset \PP_{\kk}^r$. 
	If the zero map is the only zero-degree homogeneous $R$-linear map from $\HH_*^{i-1}(\OO_{Z_0})$ to $\HH_*^{i}(\OO_{Z_0})$ for all $1 \le i \le r-1$, i.e.,
	$$
	\left[\Hom_R\Big(\HH_*^{i-1}(\OO_{Z_0}),\, \HH_*^{i}(\OO_{Z_0})\Big)\right]_0 = 0 \quad \text{for all $1 \le i \le r-1$},
	$$
	then the family $\mathcal{Z}$ has constant cohomology.
\end{theorem}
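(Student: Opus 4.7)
The plan is to transpose the proof of \autoref{thm_ one_param_ideals} into the sheaf-theoretic setting, using \autoref{lem_fib_full_crit} in place of \autoref{lem_fib_full_crit_loc}. First, I would establish a geometric analog of \autoref{prop_one_param}. Write $\mathcal{Z}_q := \mathcal{Z} \times_{\Spec(A)} \Spec(B_q) \subset \PP^r_{B_q}$. Assuming $\mathcal{Z}_q$ is fiber-full over $B_q$ with closed fiber $Z_0$, the (automatically flat) extension $\mathcal{Z}_{q+1} \subset \PP^r_{B_{q+1}}$ produces, via the truncation $\OO_{\PP^r_{B_{q+1}}} \twoheadrightarrow \OO_{\PP^r_{B_q}} \otimes_{B_q} (t)B_{q+1}$ and the induced diagram of ideal sheaves, a canonical $\OO_{\PP^r_\kk}$-linear map $\Phi_{\mathcal{Z}_{q+1}} : \II_{Z_0} \to \OO_{Z_0}$. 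Exactly as in the tangent-space computation of \autoref{thm_tan_space}, \autoref{lem_fib_full_crit} then tells us that $\mathcal{Z}_{q+1}$ is fiber-full over $B_{q+1}$ if and only if $\HH^i(\PP^r_\kk, \Phi_{\mathcal{Z}_{q+1}}(\nu)) = 0$ for all $i \ge 1$ and $\nu \in \ZZ$.

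Next, I would induct on $q$ to show that each $\mathcal{Z}_q$ is fiber-full over $B_q$. The base case $q=1$ is tautological. For the inductive step, the commutative triangle
\[
\begin{tikzcd}[row sep=1.3em]
 & \II_{Z_0} \ar[dl, "\Phi_{\mathcal{Z}_{q+1}}"'] \ar[dr, "\Phi_{\mathcal{Z}_q}"] & \\
\OO_{\mathcal{Z}_q} \ar[rr] & & \OO_{\mathcal{Z}_{q-1}}
\end{tikzcd}
\]
(coming from the construction of $\Phi_{\mathcal{Z}_{q+1}}$), combined with the inductive vanishing $\HH_*^i(\Phi_{\mathcal{Z}_q})=0$ and the surjectivity $\HH_*^i(\OO_{\mathcal{Z}_q}) \twoheadrightarrow \HH_*^i(\OO_{\mathcal{Z}_{q-1}})$ granted by fiber-fullness of $\mathcal{Z}_q$, forces $\HH_*^i(\Phi_{\mathcal{Z}_{q+1}})$ to factor through the kernel of the bottom arrow, which under the flatness identification is $\OO_{Z_0}$. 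One therefore obtains a degree-zero $R$-linear map $\HH_*^i(\II_{Z_0}) \to \HH_*^i(\OO_{Z_0})$. For $1 \le i \le r-1$, the vanishing $\HH_*^i(\OO_{\PP^r_\kk}) = 0$ combined with the exact sequence $0 \to \II_{Z_0} \to \OO_{\PP^r_\kk} \to \OO_{Z_0} \to 0$ yields the identification $\HH_*^i(\II_{Z_0}) \cong \HH_*^{i-1}(\OO_{Z_0})$, so the hypothesis $\bigl[\Hom_R(\HH_*^{i-1}(\OO_{Z_0}), \HH_*^i(\OO_{Z_0}))\bigr]_0 = 0$ kills the factored map. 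For $i \ge r$, one has $\HH_*^i(\OO_{Z_0}) = 0$ since any proper closed subscheme of $\PP^r_\kk$ has dimension at most $r-1$ (and the degenerate case $Z_0 = \PP^r_\kk$ is trivial by flatness), so $\HH_*^i(\Phi_{\mathcal{Z}_{q+1}})$ vanishes automatically. This closes the induction.

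Finally, having proved fiber-fullness at every infinitesimal neighborhood $B_q$, \autoref{thm_fib_full_shv} upgrades this to fiber-fullness of $\mathcal{Z} \times_{\Spec(A)} \Spec(B)$ over the local ring $B = A_{(t)}$. Then \autoref{lem_base_ch_fib_full}(ii) applied to the residue field $\kk$ of $B$ and to its fraction field $\kk(t)$ gives the chain of equalities
\[
\dim_\kk\HH^i(\PP^r_\kk, \OO_{Z_0}(\nu)) = \rank_B \HH^i\bigl(\PP^r_B, \OO_{\mathcal{Z} \otimes_A B}(\nu)\bigr) = \dim_{\kk(t)} \HH^i(\PP^r_{\kk(t)}, \OO_{Z_1 \otimes_\kk \kk(t)}(\nu))
\]
for all $i \ge 0$ and $\nu \in \ZZ$, delivering constant cohomology since $Z_1 \otimes_\kk \kk(t)$ is the fiber over the generic point.

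The main obstacle I anticipate is the careful bookkeeping of the cohomological indices at the boundary degree $i = r$: one must check that the dimension bound $\dim Z_0 \le r-1$ really does trivialize the top cohomological degree and explain why this is the reason for the index shift between the range $1 \le i \le r$ appearing in \autoref{thm_ one_param_ideals} and the range $1 \le i \le r-1$ appearing in the present statement.
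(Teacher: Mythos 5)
Your proposal is correct and is exactly the argument the paper intends: the paper's proof of this theorem is literally the one-line remark that it follows the proof of \autoref{thm_ one_param_ideals} with \autoref{lem_fib_full_crit} substituted for \autoref{lem_fib_full_crit_loc}, and you have carried out that transposition faithfully, including the right explanation for the index range $1 \le i \le r-1$ (the vanishing of $\HH_*^r(\OO_{Z_0})$ for a proper closed subscheme). The only microscopic imprecision is that for $i=1$ the map $\HH_*^0(\OO_{Z_0}) \to \HH_*^1(\II_{Z_0})$ is a surjection rather than an isomorphism (its kernel is the image of $R$), but precomposing with a surjection still lets the hypothesis kill the factored map, so nothing is affected.
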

\begin{proof}
	The proof is obtained along the same lines of the one of \autoref{thm_ one_param_ideals}.
\end{proof}

\section{A local comparison of the fiber-full scheme and the Hilbert scheme}
\label{sect_comparison_local_functors}

The goal of this section is to compare local deformation functors of $\Fib_{R/\kk}$ and $\HS_{R/\kk}$.
The results of this section are inspired by our study of one-parameter flat families in \autoref{sect_one_param}. 
The setup below is used throughout this section. 

\begin{setup}
	\label{setup_comparison}
	Let $\kk$ be a field, $R = \kk[x_0,\ldots,x_r]$ be a standard graded polynomial ring, and $\mm = (x_0,\ldots, x_r) \subset R$ be the graded irrelevant ideal.
\end{setup}

Let $I_0 \subset R$ be a homogeneous ideal.
We consider the deformation functors $\fFib_{I_0} : (\Art_\kk) \rightarrow (\text{Sets})$ and $\fHS_{I_0} : (\Art_\kk) \rightarrow (\text{Sets})$ given by 
$$
\fFib_{I_0}(A) \,=\, \big\{ I \in \fFib_{R/\kk}(A)  \mid I \otimes_{A} \kk \cong I_0\big\} \quad \text{ and } \quad \fHS_{I_0}(A) \,=\, \big\{ I \in \fHS_{R/\kk}(A)  \mid I \otimes_{A} \kk \cong I_0\big\},
$$
respectively.
By construction $\fFib_{I_0}$ is a subfunctor of $\fHS_{I_0}$.
The following theorem gives a family of ideals for which $\fFib_{I_0}$ and $\fHS_{I_0}$ coincide.

\begin{theorem}
	Assume \autoref{setup_comparison}.
	Let $I_0 \subset R$ be a homogeneous ideal such that 
	$$
	\left[\Hom_R\Big(\HL^{i-1}(R/I_0),\, \HL^{i}(R/I_0)\Big)\right]_0 = 0 \quad \text{for all $1 \le i \le r$}.
	$$
	Then the local deformation functors $\fFib_{I_0}$ and $\fHS_{I_0}$ coincide.
\end{theorem}
\begin{proof}
		The case $I_0 = 0$ is clear. 
		Thus we assume that $I_0 \neq 0$, and so $\HL^{r+1}(R/I_0) = 0$.
		Therefore, we can actually assume that $\left[\Hom_R\Big(\HL^{i-1}(R/I_0),\, \HL^{i}(R/I_0)\Big)\right]_0 = 0$ for all $i \ge 1$.
	
		We need to prove that $\fFib_{I_0}(A) = \fHS_{I_0}(A)$ for any $A \in \Art_\kk$.
		We proceed by induction on $\text{length}(A)$.
		The base case $\text{length}(A)=1$ is clear because we necessarily have $A = \kk$.
		Thus, we assume that $\text{length}(A) > 1$.
		We may choose an ideal $\aaa \subset A$ generated by a socle element of $A$ to obtain a short exact sequence 
	$$
	0 \rightarrow \aaa \rightarrow A \rightarrow A' \rightarrow 0
	$$	
	with $\aaa \cong \kk$ and $\text{length}(A') = \text{length}(A)-1$.
	Let $S = A[x_0,\ldots,x_r]$ and $S' = A'[x_0,\ldots,x_r]$.
	Choose $I \in \fHS_{I_0}(A)$ and set $I' = I \otimes_{A} A' \in \in \fHS_{I_0}(A')$.
	By tensoring with $- \otimes_A S/I$, since $S/I$ is flat over $A$, we obtain the short exact sequence 
	$$
	0 \rightarrow R/I_0 \rightarrow S/I \rightarrow S'/I' \rightarrow 0.
	$$
	Then we get an induced long exact sequence in cohomology
	$$
	\HL^{i-1}(S'/I') \,\xrightarrow{\delta_{i-1}}\, \HL^i(R/I_0) \,\rightarrow\, \HL^i(S/I) \,\rightarrow\, \HL^{i}(S'/I') \,\xrightarrow{\delta_i}\, \HL^{i+1}(R/I_0).
	$$
	The inductive hypothesis and \autoref{lem_base_ch_fib_full} yield the isomorphism $\HL^{i}(S'/I') \otimes_{A'} \kk \xrightarrow{\cong} \HL^i(R/I_0)$.
	Then the map $\delta_i$ gets the following factorization 
	$$
	\HL^{i}(S'/I') \; \twoheadrightarrow \; \HL^{i}(S'/I') \otimes_{A'} \kk \cong \HL^i(R/I_0) \; \xrightarrow{\beta_i} \; \HL^{i+1}(R/I_0)
	$$
	where $\beta_i$ is an $R$-linear map of degree $0$.
	The condition $\left[\Hom_R\left(\HL^i(R/I_0),\, \HL^{i+1}(R/I_0)\right)\right]_0 = 0$ implies that $\beta_i = 0$, hence $\delta_i = 0$.
	We have proved that the natural map $\HL^i(S/I)\rightarrow \HL^i(S'/I')$ is surjective, and the natural map $\HL^i(S'/I') \rightarrow \HL^i(R/I_0)$ is surjective by induction. 
	Therefore, it follows that the natural map 
	$$
	\HL^i(S/I) \,\rightarrow\, \HL^i(R/I_0)
	$$
	is also surjective for all $i \ge 0$.
	Finally, by applying \autoref{thm_fib_full_mod}, we obtain that $S/I$ is fiber-full over $S$.
	This shows that $I \in \fFib_{I_0}(A)$, as required.
\end{proof}

\section{Examples}
\label{sect_examp}

In this section, we give several examples to illustrate the main results in this paper.
We start by providing a couple of examples that indicate the broad applicability of \autoref{thm_ one_param_ideals}.
We are particularly interested in examples of non-squarefree monomial ideals that satisfy the criterion given in \autoref{thm_ one_param_ideals}. 
These families of ideals are a further advance in the context of Gr\"obner degenerations (that preserve cohomology), as they are not covered by the result of Conca-Varbaro \cite[Theorem 1.3]{CONCA_VARBARO}.
By \cite[Proposition 2.3]{CONCA_VARBARO},
squarefree monomial ideals are a particular class of ideals whose quotients give a \emph{cohomologically full ring} (\cite{COHOM_FULL_RINGS, KOLLAR_KOVACS}). 
It should be mentioned that the same aforementioned result of Conca-Varbaro also holds for homogeneous ideals whose quotient is a cohomologically full ring (see also \cite[Remark 2.9]{CONCA_VARBARO}).

\begin{remark}
	Assume \autoref{setup_one_param}.
	Let $\mathcal{I} \subset S$ be a one-parameter flat family of homogeneous ideals with special fiber $I_0 \subset R$ and general fiber $I_1 \subset R$. 
	If $R/I_0$ is cohomologically full, then   one can show that $\II$ has constant cohomology by using  \autoref{thm_fib_full_mod}.
\end{remark}

We begin with an example on residual intersections.

\begin{example}[Residual intersections]
	\label{examp_residual}
	Consider the $2 \times n$ generic matrix  
	$$
	M = \begin{pmatrix} x_{11} & x_{12} & \cdots & x_{1n} \\ x_{21} & x_{22} & \cdots & x_{2n}\end{pmatrix} 
	$$
	over $R = \kk[x_{11},\dots,x_{2n}]$ and let $I= I_2(M)$. Let $g := n-1$ denote the codimension and $\ell = 2n-3$ be the analytic spread. Let $a_1,\dots,a_{\ell}$ be quadrics in $I$ that generate a minimal reduction. Let $J_i = (a_1,\dots,a_i) \subseteq I$ and assume that $J_i:I$ is a geometric $i$-residual intersection $i \leq \ell - 1$. For $n \leq  i \leq \ell -1$, the ideal $J_i:I$ is not Cohen-Macaulay and \cite[Theorem 1.2]{EISENBUD_ULRICH} shows that 
	$$
	\HL^j(R/(J_i:I)) \ne 0 \text{ if and only if } j \in \{2n-i-3,2n-i\}.
	$$ 
Thus the hypothesis of \autoref{thm_ one_param_ideals} is satisfied and it follows that, for each $n \leq  i \leq \ell -1$, any one-parameter degeneration to $J_i:I$ has constant cohomology.
\end{example}


The following example gives two simple ideals whose quotients are not cohomologically full rings, but do satisfy the criterion of \autoref{thm_ one_param_ideals}. 

\begin{example} 
	\label{examp_first}
	We will show that the cohomology is constant in one-parameter flat families that have the following ideals $R = \kk[x,y,z]$ as the special fiber
$$
I_1 = (x^2,y^2,xy,xz,yz)  \,\, \text{ and } \,\,I_2 = (x^2y, xz^2, y^2z, z^3).
$$
Both ideals $I_1$ and $I_2$ are $1$-dimensional and not Cohen-Macaulay, and so both $R/I_1$ and $R/I_2$ are not cohomologically full rings (see \cite[Corollary 2.6]{COHOM_FULL_RINGS}).
Consider the \v{C}ech complex 
$$
\small
C_\mm^\bullet : \quad 0 \to R \xrightarrow{\begin{pmatrix}1 \\ 1 \\ 1\end{pmatrix}} 
	R_x \oplus R_y \oplus R_z \xrightarrow{\begin{pmatrix}-1 & 1 & 0 \\ -1 & 0 & 1 \\ 0 & -1 & 1 \end{pmatrix}} 
	R_{xy} \oplus R_{xz} \oplus R_{yz} \xrightarrow{\begin{pmatrix}1  \\ -1  \\ 1  \end{pmatrix}^{T}} 
	R_{xyz} \to 0.
$$
\begin{enumerate}[(1)]
\item Let $\overline{R} = R/I_1$ and notice that the \v{C}ech complex $C_\mm^\bullet \otimes_R \overline{R}$ collapses to
$$
0 \to \overline{R} \to \overline{R}_{z} \to 0.
$$
It follows that the non-zero cohomology groups are $\HH^0_{\mm}(\overline{R}) = (x,y)\overline{R} \cong \left(\kk(-1)\right)^2$ and
	$\HH^1_{\mm}(\overline{R}) = \overline{R}_z/\overline{R} \cong \bigoplus_{i=1}^\infty \kk(i)$. 
	Thus $\left[ \Hom_{R}(\HH^0_{\mm}(\overline{R}),\HH^1_{\mm}(\overline{R}))\right]_0 = 0$ and by \autoref{thm_ one_param_ideals}  cohomology is preserved under flat degenerations to the ideal $I_1 \subset R$.
	\item After setting $\overline{R} = R/I_2$,  the \v{C}ech complex $C_\mm^\bullet$ collapses to
$$
0 \to \overline{R} \xrightarrow{\varphi = \begin{pmatrix} 1 & 1 \end{pmatrix}^T} \overline{R}_{x} \oplus \overline{R}_{y} \to 0.
$$
The non-zero cohomology groups are in positions $0$ and $1$, with 
$
\HH^0_{\mm}(\overline{R})=(z^2,yz)\overline{R} \cong \left(\kk(-2)\right)^2 \oplus \left(\kk(-3)\right)^2 
$. 
Moreover, one can check that $\mm^2\left( \overline{R}_x \oplus \overline{R}_y \right)$ lies in the image of $\varphi$. It follows that $\HH^1_{\mm}(\overline{R})$ is supported in degrees $\leq 1$ and thus $\left[ \Hom_{R}(\HH^0_{\mm}(\overline{R}),\HH^1_{\mm}(\overline{R}))\right]_0 = 0$ as required.
Due to \autoref{thm_ one_param_ideals}  cohomology is preserved under flat degenerations to the ideal $I_2 \subset R$.
\end{enumerate}
\end{example}

We now test the applicability of \autoref{cor_criterion_Ext} with the following simple example of a non-squarefree monomial ideal.

\begin{example} 
	\label{examp_second}
	Consider the ideal $I = (xy^2, xz) \subseteq R = \kk[x,y,z]$. 
	By using the Taylor resolution, we can compute that $\Ext_R^1(R/I, R) \cong \frac{R}{(x)}(1)$, $\Ext_R^2(R/I, R)  \cong \frac{R}{(y^2,z)}(4)$ and that all the other modules $\Ext_R^i(R/I,R)$  vanish.
	In this case, we even have that $\Hom_R\left(\frac{R}{(x)}, \frac{R}{(y^2,z)}\right) = 0$.
	Therefore, \autoref{cor_criterion_Ext} implies that cohomology is preserved under flat degenerations to the ideal $I \subset R$.
	Notice that $\left[\HL^1(R/I)\right]_\nu \neq 0$ for $\nu \le 1$ and $\left[\HL^2(R/I)\right]_\nu \neq 0$ for $\nu \le -2$, but we have $\Hom_{R}\left(\HL^1(R/I), \HL^2(R/I)\right) = 0$.
\end{example}

\begin{example}
Here are two more examples for which degenerations to them preserve cohomology: $I_1 = (x^3,xy,xz+y^2) \subseteq \kk[x,y,z]$ and $I_2 = (x_1^2,x_1x_2,x_1x_4,x_2^2,x_3^2,x_3x_4) \subseteq \kk[x_1,\dots,x_4]$. The verification is analogous to the previous examples.
\end{example}

\begin{example} 
	Let $I \subseteq R = \kk[x_1,\ldots,x_r]$ be a squarefree monomial ideal such that $\dim (R/I) \leq 2$. 
	Then the Stanley-Reisner complex, $\Delta$, associated to $I$ is at most one dimensional. 
	Thus for any subcomplex $H \subseteq \Delta$ we have the reduced cohomology $\widetilde{\HH}^j(H;\kk)= 0$ for all $j\geq 1$. We want to show that there are no non-zero maps $ \left[\HH^{i-1}_{\mm}(R/I)\right]_{\mathbf{b}} \to \left[\HH^i_{\mm}(R/I)\right]_{\mathbf{b}}$ for any $i\geq 1$ and $\mathbf{b} \in \ZZ^r$. 
	By Hochster's formula \cite[Theorem 13.13]{MILLER_STURMFELS} this correspond to a map
$$
\widetilde{\HH}^{i-\lvert \sigma \rvert-2}(\text{lk}_{\Delta}\sigma;\kk) \longrightarrow \widetilde{\HH}^{i-\lvert \sigma \rvert-1}(\text{lk}_{\Delta}\sigma;\kk) 
$$
where $\sigma = \text{supp}(\mathbf{b})$. However, as we just mentioned, since $\Delta$ has at most dimension $1$, either the domain or codomain of the map is $0$. Thus the hypothesis of \autoref{thm_ one_param_ideals} is satisfied and we obtain the constancy of cohomology for flat degenerations to $I \subset R$. 
This is a simple illustrative instance of the result of Conca-Varbaro \cite[Theorem 1.3]{CONCA_VARBARO}.
\end{example}

It is also possible that  \autoref{thm_ one_param_ideals} fails while the ideal is monomial squarefree. 

\begin{example} 
	\label{examp_sqrfree_bad}
	Consider the squarefree monomial ideal in $R = \kk[x_1\dots x_6]$ given by 
$$
I = (x_1x_4,x_2x_4,x_3x_4,x_1x_5,x_2x_5,x_3x_5,x_1x_6,x_2x_6,x_3x_6,x_1x_2x_3).
$$
This is the Stanley-Reisner ideal associated the simplicial complex $\Delta$ which is a disjoint union of a 3-cycle and a 2-simplex:
$$
\begin{tikzpicture}
  [dot/.style={draw,fill,circle,inner sep=1pt}]
  \node[dot] (n2) at (4,1)  {};
  \node[dot] (n3) at (3,0)  {};
  \node[dot] (n4) at (5,0) {};
  \node[dot] (n5) at (7,1)  {};
  \node[dot] (n6) at (6,0)  {};
  \node[dot] (n7) at (8,0)  {};

  \foreach \from/\to in {n2/n3,n2/n4,n3/n4,n5/n6,n5/n7,n5/n6,n6/n7}
    \draw (\from) -- (\to);
    
    \draw [fill=gray, opacity=0.3] (7,1)--(6,0)--(8,0);
\end{tikzpicture}
$$
By \cite[Theorem 1.3]{CONCA_VARBARO}, any one-parameter flat degeneration to $I$ keeps cohomology constant. 
On the other hand, we will show that there is a non-zero homogeneous $R$-module homomorphism of degree zero $\left[\HH^{1}_{\mm}(R/I)\right]_0 \to \left[\HH^{2}_{\mm}(R/I)\right]_0$.  
By using Hochster's formula \cite[Theorem 13.13]{MILLER_STURMFELS},  we obtain 
$$ 
\HH^{1}_{\mm}(R/I) = \left[\HH^{1}_{\mm}(R/I)\right]_{0} = \left[\HH^{1}_{\mm}(R/I)\right]_{(0,\dots,0)}  \cong \widetilde{\HH}^{0}(\text{link}_{\Delta}(\emptyset);\kk) = \kk
$$ 
and 
$$ 
\left[\HH^{2}_{\mm}(R/I)\right]_{\ge 0} = \left[\HH^{2}_{\mm}(R/I)\right]_{(0,\dots,0)}  \cong \widetilde{\HH}^{1}(\text{link}_{\Delta}(\emptyset);\kk) = \kk.
$$ 
Thus the map on $\kk$-vector spaces $\kk \cong \HH^{1}_{\mm}(R/I) \to \left[\HH^{2}_{\mm}(R/I)\right]_0 \cong \kk$ given by $1\mapsto 1$ extends to a homogeneous $R$-module homomorphism of degree zero. 
This invalidates the hypothesis of \autoref{thm_ one_param_ideals}.
\end{example}

In \cite[Section 6]{FIBER_FULL_SCHEME} we showed that $\Hilb^{3m+1}_{\PP_\kk^3}$ decomposes into a union of two fiber-full subschemes, both of which are smooth. 
This decomposition relied on older technical computations of an analytic neighborhood of a specific Borel-fixed point in the full Hilbert scheme. We can verify the smoothness of the fiber-full schemes directly using \autoref{thm_tan_space}. 

\begin{example} \label{TWISTED_CUBIC_EXAMPLE} The Hilbert scheme $\Hilb^{3m+1}_{\PP_\kk^3}$ is a union of two smooth irreducible components such that the general member of $H$ parametrizes a twisted cubic, and the general member of $H'$ parametrizes a plane cubic union an isolated point. 
The component $H$ is $12$-dimensional while $H'$ is $15$-dimensional. 
By \cite[Example 6.2]{FIBER_FULL_SCHEME}, we have the decomposition
$$
\Hilb^{3m+1}_{\PP^3_{\kk}} = \Fib^{(\mathbf{h},0,0)}_{\PP^3_{\kk}} \sqcup \Fib^{(\mathbf{h}',0,0)}_{\PP^3_{\kk}} = (H - H \cap H') \sqcup H',
$$
where $\mathbf{h} = (h_0,h_1), \mathbf{h}' = (h_0',h_1') : \ZZ^2 \rightarrow \NN^2$ are two tuples of functions.
In \cite[Section 4]{TWISTED_CUBIC} the authors show that $\Fib^{\mathbf{h}',0,0}_{\PP^3_{\kk}}$ is smooth away from $H \cap H'$. 
Let $R = \kk[x,y,z,w]$ and set $\PP_\kk^3 = \Proj(R)$.
Furthermore, they show that the every point in $H \cap H'$ admits a flat degeneration to $I = (x^3,xz,yz,z^2) \subset R$ and that the tangent space to $Z = \Proj(R/I) \subset \PP_{\kk}^3$ in the Hilbert scheme is $16$-dimensional. 
Since $\Fib^{\mathbf{h}',0,0}_{\PP^3_{\kk}} = H'$ is $15$-dimensional, to conclude that $Z$ gives a smooth point on its fiber-full scheme, it suffices to show that 
$$
\dim_\kk \left(T_{[Z]} \Fib^{\mathbf{h}',0,0}_{\PP^3_{\kk}}\right) \,<\, \dim\left( \Hilb^{3m+1}_{\PP^3_{\kk}}\right)= 16.
$$
From the Comparison Theorem of  \cite{TWISTED_CUBIC}, we get $T_{[Z]} \Hilb_{\PP^3_{\kk}} = T_{[I]}\HS_R = \left[\Hom_R(I, R/I)\right]_0$.
By \autoref{thm_tan_space}, it is enough to show that there is some $\varphi \in \left[\Hom_R(I, R/I)\right]_0$ for which the induced map
$\HL^2(\varphi):  \HH^2_{\mm}(I) \cong \HH^1_{\mm}(R/I) \to \HH^2_{\mm}(R/I)$ is non-zero.
Let $\overline{R} = R/I$, and note that the associated \v{C}ech complex  $C_\mm^\bullet \otimes_R \overline{R}$ collapses to
$$
0 \to \overline{R} \xrightarrow{{\begin{pmatrix} 1  & 1 \end{pmatrix}}^T}
	\overline{R}_y \oplus \overline{R}_w \xrightarrow{\begin{pmatrix} -1 & 1 \end{pmatrix}} \overline{R}_{yw}\to 0.
$$
From this one can obtain that 
$$
\HH^1_{\mm}(\overline{R}) \cong \bigoplus_{i <0} \kk \cdot zw^{i} \quad \text{ and } \quad
\HH^2_{\mm}(\overline{R}) \cong \bigoplus_{i,j<0} \kk \cdot \left( 1,x,x^2 \right) \cdot y^iw^j. 
$$
Let $\varphi \in \left[\Hom_R(I, R/I)\right]_0$ be the tangent vector corresponding to $\frac{\partial}{\partial u_1}$ from \cite[proof of Lemma 6]{TWISTED_CUBIC}, i.e., $\varphi$ maps $yz \mapsto x^2$ and all other generators of $I$ to $0$. 
The map $\HL^2 (\varphi) : \HL^2(I) \rightarrow \HL^2(\overline{R})$ can be computed from the following map between complexes
	\begin{equation*}		
	\begin{tikzpicture}[baseline=(current  bounding  box.center)]
		\matrix (m) [matrix of math nodes,row sep=2.8em,column sep=1.5em,minimum width=2em, text height=1.5ex, text depth=0.25ex]
		{
			C_\mm^\bullet \otimes_R I: \quad 0 & I & C_\mm^1 \otimes_R I &  C_\mm^2 \otimes_R I & C_\mm^3 \otimes_R I & C_\mm^4 \otimes_R I & 0\\
			C_\mm^\bullet \otimes_R \overline{R}: \quad 0 & \overline{R} & \overline{R}_y \oplus \overline{R}_w &  \overline{R}_{yw} & 0  \\
		};						
		\path[-stealth]
		(m-1-1) edge (m-1-2)
		(m-1-2) edge (m-1-3)
		(m-1-3) edge (m-1-4)
		(m-1-4) edge (m-1-5)
		(m-1-5) edge (m-1-6)
		(m-1-6) edge (m-1-7)
		(m-2-1) edge (m-2-2)
		(m-2-2) edge (m-2-3)
		(m-2-3) edge (m-2-4)
		(m-2-4) edge (m-2-5)
		(m-1-2) edge (m-2-2)
		(m-1-3) edge (m-2-3)
		(m-1-4) edge (m-2-4)
		(m-1-5) edge (m-2-5)
		;
		\end{tikzpicture}	
	\end{equation*}	
	that is induced by $\varphi$.
	Consider the $2$-cocycle
	$$
	\rho = \left(xz(xw)^{-1}, yz(yw)^{-1}, z^2(zw)^{-1}\right) \,\in\, I_{xw} \oplus I_{yw} \oplus I_{zw} \subset C_\mm^2 \otimes_R I
	$$
	of $C_\mm^\bullet \otimes_R I$ and notice that $\HL^2(\varphi)$ makes the mapping
	$$
	[\rho] \in \HL^2(I) \,\mapsto\, x^2(yw)^{-1} \in  \HL^2(\overline{R}).
	$$
	The element $x^2(yw)^{-1} \in  \HL^2(\overline{R})$ is non-zero.
	This implies that $\HL^2(\varphi) \neq 0$ and completes the proof. 
	We can verify that all the other tangent vectors in \cite[Lemma 6]{TWISTED_CUBIC} induce the zero map in cohomology.
\end{example}


Our final example involves the Hilbert scheme of curves of degree $4$ and genus $0$. It is shown in  \cite[Theorem 4.10]{DETACHING}
that every $[X] \in \Hilb^{4m+1}_{\PP^3_\kk}$ induces an exact sequence
$$
0 \to  \II_X/\II_{X'} \to \OO_X \to   \OO_{X'} \to 0
$$
where $X'$ is a locally Cohen-Macaulay curve of degree $4$ and $X'$ has arithmetic genus $0$, $1$ or $3$. If the genus of $X'$ is $0$ then $X$ is either an extremal or a subextremal curve \cite{NOLLET_SUB}. By  \cite[Proposition 2.1, Corollary 5.6]{NAGEL} the curve $X$ is an extremal curve if and only if
$$
h^1(\II_X(v)) =
	\begin{cases}
	0 & \text{ if } v \leq -1 \\
	1 & \text{ if } 0 \leq v \leq 2 \\
	0 & \text{ if } v \geq 3
	\end{cases}
	\quad \text{ and }	\quad
h^2(\II_X(v)) =
	\begin{cases}
	-1 - 4\nu  & \text{ if } v \leq -1 \\
	1 & \text{ if } v = 0 \\
	0 & \text{ if } v \geq 1.
	\end{cases}
$$
Using the ideal sheaf long exact sequence we deduce that $h^1(\OO_X(\nu)) = h^2(\II_X(\nu))$ and thus $h^0(\OO_X(\nu))=(4\nu+1)+h^1(\OO_X(\nu))$. Define the tuple of functions 
$\mathbf{h} = (h_1,h_2):\ZZ^2 \to \NN^2$ where
$$
	h_0(\nu) = 
	\begin{cases}
		0 & \text{ if } v \leq -1 \\
	2 &  \text{ if } v = 0 \\
		4v+1  & \text{ if } v \geq 1, \\
	\end{cases}
	\quad \text{ and }	\quad
	h_1(\nu) = 
	\begin{cases}
		-1-4v & \text{ if } v \leq -1 \\
	1 &  \text{ if } v = 0 \\
		0  & \text{ if } v \geq 1. \\
	\end{cases}
$$

\smallskip
We will now show that although the locus of extremal curves is singular in the Hilbert scheme \cite[Example 3.10]{NOLLET_THREE}, the corresponding fiber-full scheme, namely $\Fib^{(\mathbf{h},0,0)}_{\PP^3_\kk}$, is smooth.

\begin{example} Let $X$ be a subscheme parameterized by $\Fib^{(\mathbf{h},0,0)}_{\PP^3_\kk}$. By \cite[Proposition 0.6]{DESCHAMPS_PERRIN_2}, $X$ is a disjoint union of a line and a plane cubic or 
\begin{equation} \label{WUT}
I_{X} = (x_0^2,x_0x_1,gx_1^2,Gx_0+gFx_1)
\end{equation}
where $F,G$ are forms in $x_2,x_3$ of degrees $1$ and $3$ respectively, with no common zeros, while $g:=g(x_1,x_2,x_3)$ is a quadratic form in $x_1,x_2,x_3$. By \cite[Theorem 4.3]{DESCHAMPS_PERRIN_2} the fiber-full scheme $\Fib^{(\mathbf{h},0,0)}_{\PP^3_\kk}$ is a 16 dimensional, irreducible subset of $\Hilb^{4m+1}_{\PP^3_\kk}$. 

It is easy to see that a disjoint union of a line and a plane cubic is smooth inside the Hilbert scheme and thus also in the fiber-full scheme. We may assume that $I_X$ is expressed in the form of Equation \autoref{WUT}. Up to a change of coordinates we may assume $F= x_2$. 
Since $G$ and $x_2$ do not share a common zero we may write $G := G(x_2,x_3) = (\alpha_1x_2-x_3)(\alpha_2 x_2 - x_3)(\alpha_3 x_2 - x_3)$ with $\alpha_i \in \kk$. 
Consider the following family of projectively equivalent ideals
$$
\{(x_0^2,x_0x_1,g(x_1,tx_2,x_3)x_1^2,x_0G(tx_2,x_3) + g(x_1,tx_2,x_3)x_1x_2)\}_{t\in \bbA^1}.
$$
The flat limit of the above family is $(x_0^2,x_0x_1,g(x_1,0,x_3)x_1^2,x_0x^3_3 + g(x_1,0,x_3)x_1x_2)$ (this has the correct Hilbert polynomial since it is in the same format as Equation \autoref{WUT}). Similarly, we obtain 
$$
\lim_{t\to 0} (x_0^2,x_0x_1,g(x_1,0,tx_3)x_1^2,x_0x^3_3 + g(x_1,0,tx_3)x_1x_2 = I := (x_0^2,x_0x_1,x_1^4,x_0x_3^3 + x_1^3x_2).
$$
Thus, by upper-semicontinuity, it suffices to show that $\Fib^{(\mathbf{h},0,0)}_{\PP^3_\kk}$ is smooth at $[I]$.

We begin by computing the tangent space to $[I]$ on $\Hilb^{4m+1}_{\PP^3_\kk}$. Consider the truncation
$$
J := I(3) = \mm^3 \cap I = (x_0x_1x_3,x_0^2x_3,x_0x_1x_2,x_0^2x_2,x_0x_1^2,	x_0^2x_1,x_0^3,x_1^3x_2+x_0x_3^3,x_1^4).
$$ 
As done in \cite[Lemma 3.3, Proposition 3.5]{TWO_BOREL}, it is straightforward to check that $T_{[I]} \Hilb^{4m+1}_{\PP^3_\kk} = \left[\Hom(J,S/J)\right]_0$ and that a general $\varphi \in \left[\Hom(J,S/J)\right]_0$ is  given by
\begin{align*}
\varphi(x_0x_1x_3) & = -\gamma_{1}x_0x_3^2 + \gamma_2 x_1^3 - \gamma_3 x_0x_2x_3 
	+\delta_1 x_1^2x_3 +  \delta_2x_1x_2x_3 + \delta_3 x_1x_3^2 - \delta_4 x_1x_2x_3   \\
\varphi(x_0^2x_3) & =  
	2\delta_2x_0x_2x_3 + 2\delta_3 x_0x_3^2 - \delta_4 x_0x_2x_3   \\
\varphi(x_0x_1x_2) &= 
	- \gamma_1x_0x_2x_3 - \gamma_2 x_0x_3^2 - \gamma_3 x_0x_2^2
	+ \delta_1 x_1^2x_2 + \delta_2x_1x_2^2 + \delta_3 x_1x_2x_3    - \delta_4 x_1x_2^2  \\
\varphi(x_0^2x_2) & =  
	2\delta_2x_0x_2^2 + 2\delta_3 x_0x_2x_3 - \delta_4 x_0x_2^2  \\
\varphi(x_0x_1^2) & =   \delta_1 x_1^3 +  \delta_2x_1^2x_2 + \delta_3 x_1^2x_3 - \delta_4 x_1^2x_2 \\
\varphi(x_0^2x_1) & = 0 \\
\varphi(x_0^3) &=  0  \\
\varphi(x_1^3x_2+x_0x_3^3) &=  
	\alpha_1x_0x_2^3 + \alpha_2x_0x_2^2x_3 + \alpha_3x_0x_2x_3^2 + \alpha_4x_0x_3^3 + \alpha_5x_1^3x_3 \\
	& \quad 	
	+ \gamma_1x_1^2x_2x_3 + \gamma_2 x_1^2x_3^2 + \gamma_3 x_1^2x_2^2 \\
	& \quad
	+ \beta_1 x_1^2x_2^2 + \beta_2 x_1^2x_2x_3 +  \beta_3 x_1x_2^3  + \beta_4 x_1x_2^2x_3 + \beta_5 x_1x_2x_3^2
	 +\delta_1 x_1x_3^3 + \delta_2x_2x_3^3 + \delta_3 x_3^4  \\
\varphi(x_1^4) &=  -\beta_1 x_0x_3^3 +\beta_2 x_1^3x_3+\beta_3x_1^2x_2^2 +\beta_4x_1^2x_2x_3 +\beta_5 x_1^2x_3^2 + 
	\delta_4 x_1x_3^3 
\end{align*}
where $\alpha_1,\dots,\alpha_5,\beta_1,\dots \beta_5,\delta_1,\dots,\delta_5,\gamma_1,\gamma_2,\gamma_3 $ are independent parameters. In particular, $T_{[I]}\Hilb^{4m+1}_{\PP^3_\kk}$ is $17$-dimensional. Thus, to show that $[J]$ is smooth on its fiber-full scheme, it suffices to show that $\HL^2 (\varphi) : \HL^2(J) \rightarrow \HL^2(R/J)$ is non-zero for some $\varphi$. 

Let $\varphi$ be the map obtained by setting $\gamma_2 = 1$ and all other parameters to $0$.  Let $\overline{R} = R/J$, and note that the associated \v{C}ech complex  $C_\mm^\bullet \otimes_R \overline{R}$ collapses to
$$
0 \to \overline{R} \xrightarrow{{\begin{pmatrix} 1  & 1 \end{pmatrix}}^T}
	\overline{R}_{x_2} \oplus \overline{R}_{x_3} \xrightarrow{\begin{pmatrix} -1 & 1 \end{pmatrix}} \overline{R}_{x_2x_3}\to 0.
$$
Proceeding as in \autoref{TWISTED_CUBIC_EXAMPLE}, we can consider the $2$-cocycle
\begin{align*}
	\rho = \left(\frac{-x_0^3}{x_0^2x_2},0, \frac{-x_0x_1^2}{x_1^2x_2}, 0, 
			\frac{x_0x_3^3}{x_2x_3^3}\right)  + 
			\left(0, \frac{x_0x_1^3x_3}{x_0x_3^3}, 0, \frac{x_1^4}{x_1x_3^3}, 
			\frac{x_1^3x_2}{x_2x_3^3} \right)
			\\
			\,\in\, J_{x_0x_2} \oplus J_{x_0x_3} \oplus J_{x_1x_2} \oplus J_{x_1x_3} \oplus J_{x_2x_3} \subset C_\mm^2 \otimes_R J
\end{align*}
	of $C_\mm^\bullet \otimes_R J$ and notice that $\HL^2(\varphi)$ makes the mapping
	$$
	[\rho] \in \HL^2(J) \,\mapsto\, \frac{x_1^2x_3^2}{x_2x_3^3} = \frac{x_1^2}{x_2x_3}  \in  \HL^2(\overline{R}).
	$$
It is easy to see that the above map is non-zero. Thus $[J]$ is a smooth point on the fiber-full scheme, concluding this example.
\end{example}

\section*{Acknowledgments}

We thank Joachim Jelisiejew for helpful comments and correspondence.
Y.C.R. was partially supported by an FWO Postdoctoral Fellowship (1220122N).
We thank the reviewer for helpful comments and suggestions.

\begin{bibdiv}
\begin{biblist}

\bib{HOM_HILB_SCH}{article}{
      author={\'{A}lvarez, Amelia},
      author={Sancho, Fernando},
      author={Sancho, Pedro},
       title={Homogeneous {H}ilbert scheme},
        date={2008},
     journal={Proc. Amer. Math. Soc.},
      volume={136},
      number={3},
       pages={781\ndash 790},
}

\bib{Brodmann_Sharp_local_cohom}{book}{
      author={Brodmann, M.~P.},
      author={Sharp, R.~Y.},
       title={Local cohomology.},
     edition={Second},
      series={Cambridge Studies in Advanced Mathematics},
   publisher={Cambridge University Press, Cambridge},
        date={2013},
      volume={136},
        note={An algebraic introduction with geometric applications},
}

\bib{BRUNS_HERZOG}{book}{
      author={Bruns, Winfried},
      author={Herzog, J\"urgen},
       title={Cohen-{M}acaulay rings},
     edition={2},
      series={Cambridge Studies in Advanced Mathematics},
   publisher={Cambridge University Press},
        date={1998},
}

\bib{DETACHING}{article}{
      author={Chen, Dawei},
      author={Nollet, Scott},
       title={Detaching embedded points},
        date={2012},
     journal={Algebra Number Theory},
      volume={6},
      number={4},
       pages={731\ndash 756},
}

\bib{FIBER_FULL}{article}{
      author={Cid-Ruiz, Yairon},
       title={Fiber-full modules and a local freeness criterion for local
  cohomology modules},
        date={2021},
     journal={arXiv preprint arXiv:2106.07777},
}

\bib{FIBER_FULL_SCHEME}{article}{
      author={Cid-Ruiz, Yairon},
      author={Ramkumar, Ritvik},
       title={The fiber-full scheme},
        date={2021},
     journal={arXiv preprint arXiv:2108.13986},
}

\bib{CONCA_VARBARO}{article}{
      author={Conca, Aldo},
      author={Varbaro, Matteo},
       title={Square-free {G}r\"{o}bner degenerations},
        date={2020},
     journal={Invent. Math.},
      volume={221},
      number={3},
       pages={713\ndash 730},
}

\bib{COHOM_FULL_RINGS}{article}{
      author={Dao, Hailong},
      author={De~Stefani, Alessandro},
      author={Ma, Linquan},
       title={{Cohomologically Full Rings}},
        date={201910},
        ISSN={1073-7928},
     journal={International Mathematics Research Notices},
        note={rnz203},
}

\bib{EISEN_COMM}{book}{
      author={Eisenbud, David},
       title={Commutative algebra with a view towards algebraic geometry},
      series={Graduate Texts in Mathematics, 150},
   publisher={Springer-Verlag},
        date={1995},
}

\bib{EISENBUD_ULRICH}{article}{
      author={Eisenbud, David},
      author={Ulrich, Bernd},
       title={Residual intersections of $2\times n$ determinental ideals},
        date={2022},
     journal={preprint},
}

\bib{FGA_EXPLAINED}{book}{
      author={Fantechi, Barbara},
      author={G\"{o}ttsche, Lothar},
      author={Illusie, Luc},
      author={Kleiman, Steven~L.},
      author={Nitsure, Nitin},
      author={Vistoli, Angelo},
       title={Fundamental algebraic geometry},
      series={Mathematical Surveys and Monographs},
   publisher={American Mathematical Society, Providence, RI},
        date={2005},
      volume={123},
        ISBN={0-8218-3541-6},
        note={Grothendieck's FGA explained},
}

\bib{HAIMAN_STURMFELS}{article}{
      author={Haiman, Mark},
      author={Sturmfels, Bernd},
       title={Multigraded {H}ilbert schemes},
        date={2004},
     journal={J. Algebraic Geom.},
      volume={13},
      number={4},
       pages={725\ndash 769},
}

\bib{hartshorne2010deformation}{book}{
      author={Hartshorne, Robin},
       title={Deformation theory},
   publisher={Springer},
        date={2010},
      volume={257},
}

\bib{MONOMIAL_IDEALS}{book}{
      author={Herzog, J\"{u}rgen},
      author={Hibi, Takayuki},
       title={Monomial ideals},
      series={Graduate Texts in Mathematics},
   publisher={Springer-Verlag London, Ltd., London},
        date={2011},
      volume={260},
        ISBN={978-0-85729-105-9},
         url={https://doi-org.kuleuven.e-bronnen.be/10.1007/978-0-85729-106-6},
      review={\MR{2724673}},
}

\bib{DEFORM_RUNAR}{article}{
      author={Ile, Runar},
       title={Deformation theory of {C}ohen-{M}acaulay approximation},
        date={2021},
     journal={J. Algebra},
      volume={568},
       pages={437\ndash 466},
}

\bib{RAT_CURVES_KOLLAR}{book}{
      author={Koll\'{a}r, J\'{a}nos},
       title={Rational curves on algebraic varieties},
      series={Ergebnisse der Mathematik und ihrer Grenzgebiete. 3. Folge. A
  Series of Modern Surveys in Mathematics [Results in Mathematics and Related
  Areas. 3rd Series. A Series of Modern Surveys in Mathematics]},
   publisher={Springer-Verlag, Berlin},
        date={1996},
      volume={32},
}

\bib{KOLLAR_KOVACS}{article}{
      author={Koll\'{a}r, J\'{a}nos},
      author={Kov\'{a}cs, S\'{a}ndor~J.},
       title={Deformations of log canonical and {$F$}-pure singularities},
        date={2020},
     journal={Algebr. Geom.},
      volume={7},
      number={6},
       pages={758\ndash 780},
}

\bib{DESCHAMPS_PERRIN_2}{article}{
      author={Martin-Deschamps, Mireille},
      author={Perrin, Daniel},
       title={Le sch\'{e}ma de hilbert des courbes gauches localement
  cohen-macaulay n'est (presque) jamais r\'{e}duit},
    language={French, with English summary},
        date={1996},
     journal={Ann. Sci. \'{E}cole Norm. Sup. (4)},
      volume={29},
      number={6},
       pages={757\ndash 785},
}

\bib{MILLER_STURMFELS}{book}{
      author={Miller, Ezra},
      author={Sturmfels, Bernd},
       title={Combinatorial commutative algebra},
      series={Graduate Texts in Mathematics},
   publisher={Springer-Verlag, New York},
        date={2005},
      volume={227},
}

\bib{NAGEL}{article}{
      author={Nagel, Uwe},
       title={Non-degenerate curves with maximal hartshorne-rao module},
        date={2003},
     journal={Math. Z.},
      volume={244},
      number={4},
       pages={753\ndash 773},
}

\bib{NOLLET_THREE}{article}{
      author={Nollet, Scott},
       title={The hilbert schemes of degree three curves},
    language={English, with English and French summaries},
        date={1997},
     journal={Ann. Sci. \'{E}cole Norm. Sup. (4)},
      volume={30},
      number={3},
       pages={367\ndash 384},
}

\bib{NOLLET_SUB}{article}{
      author={Nollet, Scott},
       title={Subextremal curves},
        date={1997},
     journal={Manuscripta Math.},
      volume={94},
      number={3},
       pages={303\ndash 317},
}

\bib{TWISTED_CUBIC}{article}{
      author={Piene, Ragni},
      author={Schlessinger, Michael},
       title={On the hilbert scheme compactification of the space of twisted
  cubics},
        date={1985},
        ISSN={0002-9327},
     journal={Amer. J. Math.},
      volume={107},
      number={4},
       pages={761\ndash 774},
}

\bib{CONFERENCE_LEVICO}{book}{
      author={Polini, Claudia},
      author={Raicu, Claudiu},
      author={Varbaro, Matteo},
      author={Walker, Mark~E.},
       title={Recent developments in commutative algebra},
      series={Lecture Notes in Mathematics},
   publisher={Springer, Cham},
        date={[2021] \copyright 2021},
      volume={2283},
        note={Edited by Aldo Conca, Srikanth B. Iyengar and Anurag K. Singh,
  Centro Internazionale Matematico Estivo (C.I.M.E.) Summer Schools},
}

\bib{TWO_BOREL}{article}{
      author={Ramkumar, Ritvik},
       title={Hilbert schemes with few borel fixed points},
        date={2019},
        note={arXiv:1907.13335},
}

\bib{stacks-project}{misc}{
      author={{Stacks project authors}, The},
       title={The stacks project},
         how={\url{https://stacks.math.columbia.edu}},
        date={2021},
}

\bib{TRUNG_HOA}{article}{
      author={Trung, Ng\^{o}~Vi\^{e}t},
      author={Hoa, L\^{e}~Tu\^{a}n},
       title={Affine semigroups and cohen-macaulay rings generated by
  monomials},
        date={1986},
     journal={Trans. Amer. Math. Soc.},
      volume={298},
      number={1},
       pages={145\ndash 167},
}

\end{biblist}
\end{bibdiv}

\end{document}